\newtheorem{theorem}{Theorem}[section]
\newtheorem{corollary}[theorem]{Corollary}
\newtheorem{lemma}[theorem]{Lemma}
\newtheorem{proposition}[theorem]{Proposition}
\numberwithin{equation}{section}
\newcommand{\supp}{\text{\rm supp}\,}
\newcommand{\diam}{\text{\rm diam}}
\begin{document}
\title[Singular integrals in the rational Dunkl setting ]{Singular integrals in the rational Dunkl setting }
 
\author[ J. Dziuba\'nski and A. Hejna]{Jacek Dziuba\'nski and Agnieszka Hejna}

\subjclass[2010]{{primary: 42B20, 42B25; secondary 47B38, 47G10}}
\keywords{Dunkl convolutions, singular integrals, maximal functions}

\begin{abstract}
On $\mathbb R^N$ equipped with a normalized root system $R$ and a multiplicity function $k\geq 0$ let us consider a (non-radial) kernel $K(\mathbf x)$ which has properties similar to those from the classical theory. We prove that a singular integral Dunkl convolution operator associated with the kernel $K$ is bounded on $L^p$ for $1<p<\infty$ and of weak-type (1,1). Further we study a maximal function related to the Dunkl convolutions with truncation of $K$.
\end{abstract}

\address{J. Dziuba\'nski and A. Hejna, Uniwersytet Wroc\l awski,
Instytut Matematyczny,
Pl. Grunwaldzki 2/4,
50-384 Wroc\l aw,
Poland}
\email{jdziuban@math.uni.wroc.pl}
\email{hejna@math.uni.wroc.pl}

\thanks{
Research supported by the National Science Centre, Poland (Narodowe Centrum Nauki), Grant 2017/25/B/ST1/00599.}

\maketitle

\section{Introduction}

\par 
The aim of this note is to study singular integral convolution operators  in the Dunkl setting.  We fix a  normalized  root system $R$ in $\mathbb R^N$  and a multiplicity function $k\geq 0$. Let $dw(\mathbf x)$ denote the associated measure and $\mathbf N$ the homogeneous dimension (see Section \ref{sec:preliminaries}).  For a positive integer $s$ consider a kernel   $K\in C^{s} (\mathbb R^N\setminus \{0\})$ such that 
\begin{equation}\label{eq:uni_on_annulus}\tag{A} \sup_{0<a<b<\infty} \Big| \int_{a<\|\mathbf x\|<b} K(\mathbf x)\, dw(\mathbf x)\Big|<\infty,  
\end{equation} 
\begin{equation}\label{eq:assumption1}\tag{D} 
\Big|\frac{\partial^\beta}{\partial \mathbf x^\beta} K(\mathbf x)\Big|\leq C\|\mathbf x\|^{-\mathbf N-|\beta|} \quad \text{for all} \ |\beta |\leq s. 
\end{equation}
Set 
$$K^{\{t\}}(\mathbf x)=K(\mathbf x)(1-\phi(t^{-1} \mathbf x)),$$ where $\phi$ is a  fixed  radial $C^\infty$-function    supported by the unit ball $B(0,1)$ such that $\phi (\mathbf x)=1$ for $\|\mathbf x\|<1/2$. 
We  prove that if $s$ is sufficiently large, 
 then there are constants $C_p>0$ independent of $t>0$ such that 
 \begin{align*}
     \| f*K^{\{t\}} \|_{L^p(dw)}\leq C_p\| f\|_{L^p(dw)} \quad \text{for} \ 1<p<\infty
 \end{align*}
and 
\begin{align*}
    w(\{\mathbf x\in\mathbb R^N:| f*K^{\{t\}}(\mathbf x)|>\lambda\} )\leq C_1\lambda^{-1}\| f\|_{L^1(dw)}
\end{align*}
(Theorems~\ref{theorem:weak_type_truncated1} and~\ref{theorem:strong_type_truncated1}), where the symbol $*$ denotes the Dunkl convolution. 
We also prove that under the additional assumption 
\begin{equation}\label{eq:limitA}\tag{L} 
     \lim_{\varepsilon \to 0} \int_{\varepsilon <|\mathbf x|<1} K(\mathbf x)\, dw(\mathbf x)=L,
     \end{equation} 
where $L\in \mathbb C$,  the limit $\lim_{t\to 0} f*K^{\{t\}} (\mathbf x)$ exists and defines a bounded operator on $L^p(dw)$ for $1<p<\infty$, which is of weak-type (1,1) as well (Theorem~\ref{theorem:weak_type_K}, see also Theorem~\ref{theorem:main_L2}). Moreover, in this case, 
the maximal operator 
$$ K^*f(\mathbf x)=\sup_{t>0} |f*K^{\{t\}}(\mathbf x)|$$
is bounded on $L^p(dw)$ for $1<p<\infty$ and of weak-type (1,1) (Theorem~\ref{theorem:maximal}).

If $k\equiv 0$, then $dw$ is the Lebesgue measure in $\mathbb R^N$ and the Dunkl convolution reduces to  the classical one. So the the above results are well known and $s=1$ suffices in this case (see i.e.~\cite[Chapter 5]{Duo},~\cite{St1},~\cite{St2}). However, in the general case  of $R$ and $k$ 
the main difficulty which one faces trying to study singular integral operators in the Dunkl setting  lies  in the lack of knowledge about boundendess of the so called Dunkl translations $\tau_{\mathbf x}$ on $L^p(dw)$-spaces for $p\ne 2$.  Consequently, it is not known if for a fixed non-radial $L^1$-function $f$  the Dunkl convolution operator $g\mapsto f*g$ is bounded on $L^p(dw)$.  The recent observations made in~\cite{DzH} allow us to obtain some knowledge for the functions $\tau_{\mathbf y}f(\mathbf x)$ provided  $f$ satisfies certain regularity conditions in smoothness and decay. In the present paper we explore and extend these ideas of~\cite{DzH}  for proving  boundedness of singular integral convolution operators provided $s=s_{0}$ in \eqref{eq:assumption1}, where $s_0$ is the smallest even integer bigger than $\mathbf{N}/2$. 

\section{Preliminaries and notation}\label{sec:preliminaries}

The Dunkl theory is a generalization of the Euclidean Fourier analysis. It started with the seminal article \cite{Dunkl} and developed extensively afterwards (see e.g. \cite{RoeslerDeJeu}, \cite{Dunkl0}, \cite{Dunkl3}, \cite{Dunkl2}, \cite{GR}, \cite{Roesler2}, \cite{Roesle99}, \cite{Roesler2003}, \cite{ThangaveluXu}, \cite{Trimeche2002}). 
In this section we present basic facts concerning the theory of the Dunkl operators.  For details we refer the reader to~\cite{Dunkl},~\cite{Roesler3}, and~\cite{Roesler-Voit}. 

We consider the Euclidean space $\mathbb R^N$ with the scalar product $\langle\mathbf x,\mathbf y\rangle=\sum_{j=1}^N x_jy_j
$, $\mathbf x=(x_1,...,x_N)$, $\mathbf y=(y_1,...,y_N)$, and the norm $\| \mathbf x\|^2=\langle \mathbf x,\mathbf x\rangle$. For a nonzero vector $\alpha\in\mathbb R^N$,  the reflection $\sigma_\alpha$ with respect to the hyperplane $\alpha^\perp$ orthogonal to $\alpha$ is given by
\begin{align*}
\sigma_\alpha (\mathbf x)=\mathbf x-2\frac{\langle \mathbf x,\alpha\rangle}{\| \alpha\| ^2}\alpha.
\end{align*}
In this paper we fix a normalized root system in $\mathbb R^N$, that is, a finite set  $R\subset \mathbb R^N\setminus\{0\}$ such that   $\sigma_\alpha (R)=R$ and $\|\alpha\|=\sqrt{2}$ for every $\alpha\in R$. The finite group $G$ generated by the reflections $\sigma_\alpha \in R$ is called the {\it Weyl group} ({\it reflection group}) of the root system. A~{\textit{multiplicity function}} is a $G$-invariant function $k:R\to\mathbb C$ which will be fixed and $\geq 0$  throughout this paper. 
 Let
\begin{align*}
dw(\mathbf x)=\prod_{\alpha\in R}|\langle \mathbf x,\alpha\rangle|^{k(\alpha)}\, d\mathbf x
\end{align*} 
be  the associated measure in $\mathbb R^N$, where, here and subsequently, $d\mathbf x$ stands for the Lebesgue measure in $\mathbb R^N$.
We denote by $\mathbf N=N+\sum_{\alpha \in R} k(\alpha)$ the homogeneous dimension of the system. Clearly, 
\begin{align*} w(B(t\mathbf x, tr))=t^{\mathbf N}w(B(\mathbf x,r)) \ \ \text{\rm for all } \mathbf x\in\mathbb R^N, \ t,r>0 
\end{align*}
and
\begin{align*}
\int_{\mathbb R^N} f(\mathbf x)\, dw(\mathbf x)=\int_{\mathbb R^N} t^{-\mathbf N} f(\mathbf x\slash t)\, dw(\mathbf x)\ \ \text{for} \ f\in L^1(dw)  \   \text{\rm and} \  t>0.
\end{align*}
Observe that (\footnote{The symbol $\sim$ between two positive expressions means that their ratio remains between two positive constants.})
\begin{align*} w(B(\mathbf x,r))\sim r^{N}\prod_{\alpha \in R} (|\langle \mathbf x,\alpha\rangle |+r)^{k(\alpha)},
\end{align*}
so $dw(\mathbf x)$ is doubling, that is, there is a constant $C>0$ such that
\begin{equation}\label{eq:doubling} w(B(\mathbf x,2r))\leq C w(B(\mathbf x,r)) \ \ \text{ for all } \mathbf x\in\mathbb R^N, \ r>0.
\end{equation}

For $\xi \in \mathbb{R}^N$, the {\it Dunkl operators} $T_\xi$  are the following $k$-deformations of the directional derivatives $\partial_\xi$ by a  difference operator:
\begin{align*}
     T_\xi f(\mathbf x)= \partial_\xi f(\mathbf x) + \sum_{\alpha\in R} \frac{k(\alpha)}{2}\langle\alpha ,\xi\rangle\frac{f(\mathbf x)-f(\sigma_\alpha \mathbf x)}{\langle \alpha,\mathbf x\rangle}.
\end{align*}
The Dunkl operators $T_{\xi}$, which were introduced in~\cite{Dunkl}, commute and are skew-symmetric with respect to the $G$-invariant measure $dw$.
 Suppose that $\xi\ne 0$, $f,g \in C^1(\mathbb{R}^N)$ and $g$ is radial. The following Leibniz rule can be confirmed by a direct calculation:
\begin{align*}
T_{\xi}(f g)=f(T_\xi g)+g(T_\xi f). 
\end{align*}
For fixed $\mathbf y\in\mathbb R^N$ the {\it Dunkl kernel} $E(\mathbf x,\mathbf y)$ is the unique analytic solution to the system
\begin{equation}\label{eq:Dunkl_kernel_definition}
    T_\xi f=\langle \xi,\mathbf y\rangle f, \ \ f(0)=1.
\end{equation}
The function $E(\mathbf x ,\mathbf y)$, which generalizes the exponential  function $e^{\langle \mathbf x,\mathbf y\rangle}$, has the unique extension to a holomorphic function on $\mathbb C^N\times \mathbb C^N$. Moreover, it satisfies $E(\mathbf{x},\mathbf{y})=E(\mathbf{y},\mathbf{x})$ for all $\mathbf{x},\mathbf{y} \in \mathbb{C}^N$.

Let $\{e_j\}_{1 \leq j \leq N}$ denote the canonical orthonormal basis in $\mathbb R^N$ and let $T_j=T_{e_j}$. For multi-index $\beta=(\beta_1,\beta_2,\ldots,\beta_N)  \in\mathbb N_0^N$, we set 
$$ |\beta|=\beta_1+\beta_2 +\ldots +\beta_N,$$
$$\partial^{\beta}=\partial_1^{\beta_1} \circ \partial_2^{\beta_2}\circ \ldots \circ \partial_N^{\beta_N},$$
$$ T^\beta = T_1^{\beta_1}\circ T_2^{\beta_2}\circ \ldots \circ T_N^{\beta_N}.$$
In our further consideration we shall need the following lemma.
\begin{lemma}
For all $\mathbf{x} \in \mathbb{R}^N$, $\mathbf{z} \in \mathbb{C}^N$ and $\nu \in \mathbb{N}_0^{N}$ we have
$$|\partial^{\nu}_{\mathbf{z}}E(\mathbf{x},\mathbf{z})| \leq \|\mathbf{x}\|^{|\nu|}\exp(\|\mathbf{x}\|\|{\rm Re \;}\mathbf{z}\|).$$
In particular, 
\begin{align*} | E(i\xi, \mathbf x)|\leq 1 \quad \text{ for all } \xi,\mathbf x\in \mathbb R^N.
\end{align*}
\end{lemma}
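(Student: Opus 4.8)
The plan is to deduce both estimates from Rösler's positive integral representation of the Dunkl kernel, which is the one nontrivial ingredient. Recall that for each fixed $\mathbf{x}\in\R^N$ there is a probability measure $\mu_{\mathbf{x}}$ on $\R^N$, supported in the convex hull of the orbit $\{g\mathbf{x}:g\in G\}$, such that
$$E(\mathbf{x},\mathbf{z})=\int_{\R^N} e^{\langle \eta,\mathbf{z}\rangle}\, d\mu_{\mathbf{x}}(\eta)\qquad\text{for all }\mathbf{z}\in\C^N.$$
The two features I would exploit are that $\mu_{\mathbf{x}}$ is a \emph{probability} measure, so $\mu_{\mathbf{x}}(\R^N)=E(\mathbf{x},0)=1$, and that its support is small: since $G$ acts by orthogonal transformations, every point of the orbit has norm $\|\mathbf{x}\|$, and by convexity this forces $\supp \mu_{\mathbf{x}}\subset \overline{B(0,\|\mathbf{x}\|)}$.

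Next I would differentiate the representation in the second variable. Because $\mu_{\mathbf{x}}$ has compact support and $\mathbf{z}\mapsto e^{\langle\eta,\mathbf{z}\rangle}$ is entire with derivatives locally bounded uniformly in $\eta$ over that support, differentiation under the integral sign is justified and gives, for $\nu\in\mathbb{N}_0^N$,
$$\partial_{\mathbf{z}}^{\nu}E(\mathbf{x},\mathbf{z})=\int_{\R^N}\eta^{\nu}\,e^{\langle\eta,\mathbf{z}\rangle}\, d\mu_{\mathbf{x}}(\eta),\qquad \eta^{\nu}=\eta_1^{\nu_1}\cdots\eta_N^{\nu_N}.$$
On the support of $\mu_{\mathbf{x}}$ one has $|\eta^{\nu}|\le \|\eta\|^{|\nu|}\le\|\mathbf{x}\|^{|\nu|}$, while $|e^{\langle\eta,\mathbf{z}\rangle}|=e^{\langle\eta,\Re\mathbf{z}\rangle}\le e^{\|\eta\|\,\|\Re\mathbf{z}\|}\le e^{\|\mathbf{x}\|\,\|\Re\mathbf{z}\|}$ by the Cauchy--Schwarz inequality (note $\eta$ is real, so only $\Re\mathbf{z}$ enters). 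Inserting these two pointwise bounds under the integral and using the total mass $\mu_{\mathbf{x}}(\R^N)=1$ yields exactly $|\partial_{\mathbf{z}}^{\nu}E(\mathbf{x},\mathbf{z})|\le\|\mathbf{x}\|^{|\nu|}\exp(\|\mathbf{x}\|\,\|\Re\mathbf{z}\|)$.

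For the displayed special case I would take $\nu=0$ and a purely imaginary second argument, using the symmetry $E(i\xi,\mathbf{x})=E(\mathbf{x},i\xi)$ to put the kernel in the form covered by the main estimate. Since $\Re(i\xi)=0$, the bound with $\mathbf{z}=i\xi$ gives $|E(i\xi,\mathbf{x})|=|E(\mathbf{x},i\xi)|\le\|\mathbf{x}\|^{0}e^{0}=1$. I expect the only real obstacle to be the invocation of Rösler's positivity theorem: the probabilistic representation is what makes the sharp constant $1$ and the clean power $\|\mathbf{x}\|^{|\nu|}$ available, whereas extracting the same bounds directly from the defining system \eqref{eq:Dunkl_kernel_definition} or from the power series of $E$ would be considerably more delicate. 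Everything after that reduction is the routine estimation carried out above.
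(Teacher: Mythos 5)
Your argument is correct and is essentially the same as the paper's: the paper simply cites R\"osler's Corollary 5.3, whose proof is exactly this computation via the positive integral representation $E(\mathbf{x},\mathbf{z})=\int e^{\langle\eta,\mathbf{z}\rangle}\,d\mu_{\mathbf{x}}(\eta)$ with $\mu_{\mathbf{x}}$ a probability measure supported in $\operatorname{conv}\mathcal{O}(\mathbf{x})\subset\overline{B(0,\|\mathbf{x}\|)}$. Your reconstruction, including the differentiation under the integral and the reduction of the special case to $\nu=0$, $\mathbf{z}=i\xi$ via symmetry, is sound.
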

\begin{proof}
See~\cite[Corollary 5.3]{Roesle99}.
\end{proof}

\begin{corollary}\label{coro:Roesler}
There is a constant $C>0$ such that for all $\mathbf{x},\xi \in \mathbb{R}^N$ we have
\begin{equation}
|E(i\xi,\mathbf{x})-1| \leq C\|\mathbf{x}\|\|\xi\|.
\end{equation}
\end{corollary}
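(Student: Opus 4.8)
The plan is to reduce the estimate to a first-order expansion of $E$ along a straight segment and to apply the derivative bound from the preceding lemma. First I would record that, by the symmetry $E(i\xi,\mathbf{x})=E(\mathbf{x},i\xi)$ together with the normalization $E(\mathbf{x},0)=1$ (which follows from the defining system \eqref{eq:Dunkl_kernel_definition} and the symmetry of $E$), the quantity to be estimated is the increment of the smooth function $h(t):=E(\mathbf{x},it\xi)$ between $t=0$ and $t=1$. Since $E$ extends holomorphically, $h$ is $C^1$, and the fundamental theorem of calculus gives
$$E(i\xi,\mathbf{x})-1=h(1)-h(0)=\int_0^1 h'(t)\,dt=\int_0^1\sum_{j=1}^N i\xi_j\,(\partial_{z_j}E)(\mathbf{x},it\xi)\,dt,$$
where $\partial_{z_j}$ denotes differentiation in the $j$-th coordinate of the second argument.

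Next I would bound the integrand by invoking the lemma with $\nu=e_j$, so that $|\nu|=1$. The decisive point is that the second argument $it\xi$ is purely imaginary, whence $\operatorname{Re}(it\xi)=0$ and the factor $\exp(\|\mathbf{x}\|\,\|\operatorname{Re}\mathbf{z}\|)$ appearing in the lemma collapses to $1$. This yields $|(\partial_{z_j}E)(\mathbf{x},it\xi)|\leq\|\mathbf{x}\|$ uniformly in $t\in[0,1]$ and in $j$.

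Combining the two steps, I would estimate $|h'(t)|\leq\|\mathbf{x}\|\sum_{j=1}^N|\xi_j|\leq\sqrt{N}\,\|\mathbf{x}\|\,\|\xi\|$ by the Cauchy--Schwarz inequality, and then integrate in $t$ over $[0,1]$ to obtain $|E(i\xi,\mathbf{x})-1|\leq\sqrt{N}\,\|\mathbf{x}\|\,\|\xi\|$, which is the claim with $C=\sqrt{N}$.

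I would expect the only genuine subtlety, rather than an obstacle, to be the correct choice of the path of integration: one must run the segment in the imaginary direction (the variable $it\xi$) instead of deforming $\mathbf{x}$, precisely so that the real part vanishes and the exponential growth in the lemma is neutralized. An arbitrary path would instead produce a bound growing with $\|\xi\|$ rather than the desired linear estimate.
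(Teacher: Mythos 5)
Your argument is correct and is exactly the intended one: the paper states this as an immediate consequence of the preceding lemma, obtained by writing $E(i\xi,\mathbf{x})-1=\int_0^1\frac{d}{dt}E(\mathbf{x},it\xi)\,dt$ and using the derivative bound with $\operatorname{Re}(it\xi)=0$ so that the exponential factor equals $1$. Your observation about choosing the segment in the purely imaginary direction is precisely the point that makes the constant independent of $\|\xi\|$, and the resulting constant $C=\sqrt{N}$ is fine.
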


The \textit{Dunkl transform}
  \begin{align*}\mathcal F f(\xi)=c_k^{-1}\int_{\mathbb R^N} E(-i\xi, \mathbf x)f(\mathbf x)\, dw(\mathbf x),
  \end{align*}
  where
  $$c_k=\int_{\mathbb{R}^N}e^{-\frac{\|\mathbf{x}\|^2}{2}}\,dw(\mathbf{x})>0,$$
   originally defined for $f\in L^1(dw)$, is an isometry on $L^2(dw)$, i.e.,
   \begin{equation}\label{eq:Plancherel}
       \|f\|_{L^2(dw)}=\|\mathcal{F}f\|_{L^2(dw)} \text{ for all }f \in L^2(dw),
   \end{equation}
and preserves the Schwartz class of functions $\mathcal S(\mathbb R^N)$ (see \cite{deJeu}). Its inverse $\mathcal F^{-1}$ has the form
  \begin{align*} \mathcal F^{-1} g(x)=c_k^{-1}\int_{\mathbb R^N} E(i\xi, \mathbf x)g(\xi)\, dw(\xi).
  \end{align*}
The {\it Dunkl translation\/} $\tau_{\mathbf{x}}f$ of a function $f\in\mathcal{S}(\mathbb{R}^N)$ by $\mathbf{x}\in\mathbb{R}^N$ is defined by
\begin{align*}
\tau_{\mathbf{x}} f(\mathbf{y})=c_k^{-1} \int_{\mathbb{R}^N}{E}(i\xi,\mathbf{x})\,{E}(i\xi,\mathbf{y})\,\mathcal{F}f(\xi)\,{dw}(\xi).
\end{align*}
  It is a contraction on $L^2(dw)$, however it is an open  problem  if the Dunkl translations are bounded operators on $L^p(dw)$ for $p\ne 2$.
  
  The following specific formula was obtained by R\"osler \cite{Roesler2003} for the Dunkl translations of (reasonable) radial functions $f({\mathbf{x}})=\tilde{f}({\|\mathbf{x}\|})$:
\begin{equation}\label{eq:translation-radial}
\tau_{\mathbf{x}}f(-\mathbf{y})=\int_{\mathbb{R}^N}{(\tilde{f}\circ A)}(\mathbf{x},\mathbf{y},\eta)\,d\mu_{\mathbf{x}}(\eta)\text{ for all }\mathbf{x},\mathbf{y}\in\mathbb{R}^N.
\end{equation}
Here
\begin{equation*}
A(\mathbf{x},\mathbf{y},\eta)=\sqrt{{\|}\mathbf{x}{\|}^2+{\|}\mathbf{y}{\|}^2-2\langle \mathbf{y},\eta\rangle}=\sqrt{{\|}\mathbf{x}{\|}^2-{\|}\eta{\|}^2+{\|}\mathbf{y}-\eta{\|}^2}
\end{equation*}
and $\mu_{\mathbf{x}}$ is a probability measure, 
which is supported in the set $\operatorname{conv}\mathcal{O}(\mathbf{x})$,  where $\mathcal O(\mathbf x) =\{\sigma(\mathbf x): \sigma \in G\}$ is the orbit of $\mathbf x$. Formula~\eqref{eq:translation-radial} implies that for all radial $f \in L^1(dw)$ and $\mathbf{x} \in \mathbb{R}^N$ we have
\begin{align*}
    \|\tau_{\mathbf{x}}f(\mathbf{y})\|_{L^1(dw(\mathbf{y}))} \leq \|f(\mathbf{y})\|_{L^1(dw(\mathbf{y}))}.
\end{align*}

  {The \textit{Dunkl convolution\/} $f*g$ of two reasonable functions (for instance Schwartz functions) is defined by
$$
(f*g)(\mathbf{x})=c_k\,\mathcal{F}^{-1}[(\mathcal{F}f)(\mathcal{F}g)](\mathbf{x})=\int_{\mathbb{R}^N}(\mathcal{F}f)(\xi)\,(\mathcal{F}g)(\xi)\,E(\mathbf{x},i\xi)\,dw(\xi) \text{ for }\mathbf{x}\in\mathbb{R}^N,
$$
or, equivalently, by}
\begin{align*}
  {(f{*}g)(\mathbf{x})=\int_{\mathbb{R}^N}f(\mathbf{y})\,\tau_{\mathbf{x}}g(-\mathbf{y})\,{dw}(\mathbf{y})=\int_{\mathbb R^N} f(\mathbf y)g(\mathbf x,\mathbf y) \,dw(\mathbf{y}) \text{ for all } \mathbf{x}\in\mathbb{R}^N},  
\end{align*}
where, here and subsequently, $g(\mathbf x,\mathbf y)=\tau_{\mathbf x}g(-\mathbf y)$. 

By an interpolation argument, if $1\leq p\leq 2$ and $q=2p/(2-p)$, then
\begin{align*}
  \|f*g\|_{L^q(dw)}\leq \| f\|_{L^2(dw)}\| g\|_{L^p(dw)}.  
\end{align*}

The {\it Dunkl Laplacian} associated with $R$ and $k$  is the differential-difference operator $\Delta=\sum_{j=1}^N T_{j}^2$, which  acts on $C^2(\mathbb{R}^N)$-functions by

\begin{align*}
    \Delta f(\mathbf x)=\Delta_{\rm eucl} f(\mathbf x)+\sum_{\alpha\in R} k(\alpha) \delta_\alpha f(\mathbf x),
\end{align*}
\begin{align*}
    \delta_\alpha f(\mathbf x)=\frac{\partial_\alpha f(\mathbf x)}{\langle \alpha , \mathbf x\rangle} - \frac{\|\alpha\|^2}{2} \frac{f(\mathbf x)-f(\sigma_\alpha \mathbf x)}{\langle \alpha, \mathbf x\rangle^2}.
\end{align*}
Obviously, $\mathcal F(\Delta f)(\xi)=-\| \xi\|^2\mathcal Ff(\xi)$. The operator $\Delta$ is essentially self-adjoint on $L^2(dw)$ (see for instance \cite[Theorem\;3.1]{AH}) and generates the semigroup $e^{t\Delta}$  of linear self-adjoint contractions on $L^2(dw)$. The semigroup has the form
  \begin{align*}
  e^{t\Delta} f(\mathbf x)=\mathcal F^{-1}(e^{-t\|\xi\|^2}\mathcal Ff(\xi))(\mathbf x)=\int_{\mathbb R^N} h_t(\mathbf x,\mathbf y)f(\mathbf y)\, dw(\mathbf y),
  \end{align*}
  where the heat kernel 
  \begin{equation}\label{eq:heat_def}
      h_t(\mathbf x,\mathbf y)=\tau_{\mathbf x}h_t(-\mathbf y), \ \ h_t(\mathbf x)=\mathcal F^{-1} (e^{-t\|\xi\|^2})(\mathbf x)=c_k^{-1} (2t)^{-\mathbf N\slash 2}e^{-\| \mathbf x\|^2\slash (4t)}
  \end{equation}
  is a $C^\infty$-function of all variables $\mathbf x,\mathbf y \in \mathbb{R}^N$, $t>0$, and satisfies \begin{align*} 0<h_t(\mathbf x,\mathbf y)=h_t(\mathbf y,\mathbf x),
  \end{align*}
 \begin{align*} \int_{\mathbb R^N} h_t(\mathbf x,\mathbf y)\, dw(\mathbf y)=1.
 \end{align*}
  Set
$$V(\mathbf x,\mathbf y,t)=\max (w(B(\mathbf x,t)),w(B(\mathbf y, t))).$$
Let 
$$d(\mathbf x,\mathbf y)=\min_{\sigma\in G}\| \sigma(\mathbf x)-\mathbf y\|$$
be the distance of the orbit of $\mathbf x$ to the orbit of $\mathbf y$.
The following theorem was proved in~\cite[Theorem 3.1]{DzH1} (see also~\cite[Theorem 4.1]{ADzH}).
\begin{theorem}\label{theorem:heat}
There are constants $C,c>0$ such that for all $\mathbf{x},\mathbf{y} \in \mathbb{R}^N$ and $t>0$ we have
\begin{equation}\label{eq:Gauss}
h_t(\mathbf{x},\mathbf{y}) \leq C\Big(1+\frac{\|\mathbf{x}-\mathbf{y}\|}{t}\Big)^{-2}\,V(\mathbf{x},\mathbf{y},\!\sqrt{t\,})^{-1}\,e^{-\hspace{.25mm}c\hspace{.5mm}d(\mathbf{x},\mathbf{y})^2\slash t}.
\end{equation}
Moreover, if $\|\mathbf{y}-\mathbf{y}'\| \leq \sqrt{t}$, then
\begin{equation}\label{eq:Gauss-Lipschitz}
|h_t(\mathbf{x},\mathbf{y})-h_t(\mathbf{x},\mathbf{y}')| \leq C\frac{\|\mathbf{y}-\mathbf{y}'\|}{\sqrt{t}}\Big(1+\frac{\|\mathbf{x}-\mathbf{y}\|}{t}\Big)^{-2}\,V(\mathbf{x},\mathbf{y},\!\sqrt{t\,})^{-1}\,e^{-\hspace{.25mm}c\hspace{.5mm}d(\mathbf{x},\mathbf{y})^2\slash t}.
\end{equation}

\end{theorem}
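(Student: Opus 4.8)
The plan is to use that the heat kernel is the Dunkl translation of a \emph{radial} function, so that R\"osler's formula \eqref{eq:translation-radial} is available. Writing $\tilde h_t(r)=c_k^{-1}(2t)^{-\mathbf{N}/2}e^{-r^2/(4t)}$ for the radial profile of $h_t$, \eqref{eq:translation-radial} gives
\begin{equation*}
h_t(\mathbf{x},\mathbf{y})=\tau_{\mathbf{x}}h_t(-\mathbf{y})=c_k^{-1}(2t)^{-\mathbf{N}/2}\int_{\mathbb{R}^N}e^{-A(\mathbf{x},\mathbf{y},\eta)^2/(4t)}\,d\mu_{\mathbf{x}}(\eta),
\end{equation*}
and the whole theorem is reduced to estimating this integral, recalling that $\mu_{\mathbf{x}}$ is a probability measure supported in $\operatorname{conv}\mathcal{O}(\mathbf{x})$.

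Before estimating, I would record two elementary properties of $A$. Since a linear functional attains its maximum over a convex polytope at a vertex, for every $\eta\in\operatorname{conv}\mathcal{O}(\mathbf{x})$ we have $\langle\mathbf{y},\eta\rangle\le\max_{\sigma\in G}\langle\mathbf{y},\sigma(\mathbf{x})\rangle$, and therefore
\begin{equation*}
A(\mathbf{x},\mathbf{y},\eta)^2=\|\mathbf{x}\|^2+\|\mathbf{y}\|^2-2\langle\mathbf{y},\eta\rangle\ge\min_{\sigma\in G}\|\sigma(\mathbf{x})-\mathbf{y}\|^2=d(\mathbf{x},\mathbf{y})^2.
\end{equation*}
Writing instead $A=\bigl((\|\mathbf{x}\|^2-\|\eta\|^2)+\|\mathbf{y}-\eta\|^2\bigr)^{1/2}$ with $\|\mathbf{x}\|^2-\|\eta\|^2\ge0$, one also sees that $\mathbf{y}\mapsto A(\mathbf{x},\mathbf{y},\eta)$ is $1$-Lipschitz, uniformly in $\eta$. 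The first property lets me split $e^{-A^2/(4t)}\le e^{-cd(\mathbf{x},\mathbf{y})^2/t}\,e^{-A^2/(8t)}$ and peel off the factor $e^{-cd(\mathbf{x},\mathbf{y})^2/t}$ appearing in \eqref{eq:Gauss}.

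The genuinely hard step is to turn the prefactor $(2t)^{-\mathbf{N}/2}$ into $V(\mathbf{x},\mathbf{y},\sqrt t)^{-1}$ together with the polynomial gain $(1+\|\mathbf{x}-\mathbf{y}\|/t)^{-2}$. Using $w(B(\mathbf{x},\sqrt t))\sim t^{N/2}\prod_{\alpha\in R}(|\langle\mathbf{x},\alpha\rangle|+\sqrt t)^{k(\alpha)}$ one finds
\begin{equation*}
(2t)^{-\mathbf{N}/2}\sim w(B(\mathbf{x},\sqrt t))^{-1}\prod_{\alpha\in R}\Bigl(1+\tfrac{|\langle\mathbf{x},\alpha\rangle|}{\sqrt t}\Bigr)^{k(\alpha)},
\end{equation*}
and similarly with $\mathbf{y}$ in place of $\mathbf{x}$, so the raw prefactor overshoots $V(\mathbf{x},\mathbf{y},\sqrt t)^{-1}$ by the surplus $\prod_{\alpha}(1+|\langle\mathbf{x},\alpha\rangle|/\sqrt t)^{k(\alpha)}\ge1$. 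This surplus, as well as the extra decay $(1+\|\mathbf{x}-\mathbf{y}\|/t)^{-2}$, has to be recovered from the way $\mu_{\mathbf{x}}$ distributes its mass over $\operatorname{conv}\mathcal{O}(\mathbf{x})$ relative to the scale $\sqrt t$. This quantitative analysis of $\mu_{\mathbf{x}}$ — which is only known indirectly and encodes the orbit geometry of the space — is the main obstacle, and is precisely the content of the estimates established in \cite{DzH1}.

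For the Lipschitz bound \eqref{eq:Gauss-Lipschitz} I would differentiate under the integral sign, which is harmless here because $\mu_{\mathbf{x}}$ does not depend on $\mathbf{y}$. The mean value theorem, the elementary inequality $\frac{u}{2t}e^{-u^2/(4t)}\le Ct^{-1/2}e^{-u^2/(8t)}$, and the $1$-Lipschitz estimate $|A(\mathbf{x},\mathbf{y},\eta)-A(\mathbf{x},\mathbf{y}',\eta)|\le\|\mathbf{y}-\mathbf{y}'\|$ together produce exactly the factor $\|\mathbf{y}-\mathbf{y}'\|/\sqrt t$. The hypothesis $\|\mathbf{y}-\mathbf{y}'\|\le\sqrt t$ ensures $d(\mathbf{x},\mathbf{y}')\sim d(\mathbf{x},\mathbf{y})$ and comparability of the volumes at $\mathbf{y}$ and $\mathbf{y}'$, so that \eqref{eq:Gauss-Lipschitz} follows from the bound already obtained for \eqref{eq:Gauss}.
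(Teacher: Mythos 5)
First, a point of comparison: the paper does not prove Theorem~\ref{theorem:heat} at all --- it is imported verbatim from \cite[Theorem 3.1]{DzH1} (see also \cite[Theorem 4.1]{ADzH}) --- so there is no internal argument to measure your proposal against, only the cited source. Your opening moves do follow the strategy of that source: writing $h_t(\mathbf{x},\mathbf{y})=c_k^{-1}(2t)^{-\mathbf{N}/2}\int e^{-A(\mathbf{x},\mathbf{y},\eta)^2/(4t)}\,d\mu_{\mathbf{x}}(\eta)$ via \eqref{eq:translation-radial}, and the two elementary facts you record (that $A(\mathbf{x},\mathbf{y},\eta)^2\ge d(\mathbf{x},\mathbf{y})^2$ because $\|\eta\|\le\|\mathbf{x}\|$ on $\operatorname{conv}\mathcal{O}(\mathbf{x})$, and that $\mathbf{y}\mapsto A(\mathbf{x},\mathbf{y},\eta)$ is $1$-Lipschitz uniformly in $\eta$) are correct and are exactly the right preliminaries.

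Nevertheless, as a self-contained proof the proposal has a genuine gap, and you name it yourself: everything beyond the trivial bound $h_t(\mathbf{x},\mathbf{y})\le Ct^{-\mathbf{N}/2}e^{-d(\mathbf{x},\mathbf{y})^2/(4t)}$ --- namely the replacement of $t^{-\mathbf{N}/2}$ by $V(\mathbf{x},\mathbf{y},\sqrt{t})^{-1}$ \emph{and} the extra polynomial decay $(1+\|\mathbf{x}-\mathbf{y}\|/t)^{-2}$ --- requires quantitative control of how $\mu_{\mathbf{x}}$ distributes its mass relative to the level sets of $\eta\mapsto A(\mathbf{x},\mathbf{y},\eta)$ at scale $\sqrt{t}$, and you defer precisely this to \cite{DzH1}. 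That control is the actual content of the theorem: it rests on nontrivial lemmas about $\mu_{\mathbf{x}}$ (roughly, bounds of the shape $\mu_{\mathbf{x}}(\{\eta: A(\mathbf{x},\mathbf{y},\eta)\le r\})\lesssim w(B(\mathbf{x},r))\,V(\mathbf{x},\mathbf{y},r)^{-1}$ together with a gain in powers of $r/\|\mathbf{x}-\mathbf{y}\|$), which do not follow from the positivity, total mass one, and support of $\mu_{\mathbf{x}}$ alone. The same missing estimate is needed a second time in your treatment of \eqref{eq:Gauss-Lipschitz}, since after differentiating under the integral and applying the mean value theorem you are left with an integral against $\mu_{\mathbf{x}}$ of exactly the same type. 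So your sketch is a faithful reduction of the theorem to the hard lemma of the cited paper, but it is not a proof; since the paper itself also only cites \cite{DzH1}, the practical upshot is that neither text contains a self-contained argument, and yours at least makes the reduction and the location of the difficulty explicit.
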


Theorem~\ref{theorem:heat} and~\eqref{eq:translation-radial} imply the following Lemma (see~\cite[Corollary 3.5]{DzH1}).

\begin{lemma}\label{lem:nonradial_estimation}
Suppose that $\varphi \in C_c^{\infty}(\mathbb{R}^N)$ is radial and supported by the unit ball. Then there is $C>0$ such that for all $\mathbf{x},\mathbf{y}\in \mathbb{R}^{N}$ and $t>0$ we have
\begin{align*}
    &|\varphi_t(\mathbf{x},\mathbf{y})| \leq C \Big(1+\frac{\|\mathbf{x}-\mathbf{y}\|}{t}\Big)^{-2}V(\mathbf{x},\mathbf{y},t)^{-1}\chi_{[0,1]}(d(\mathbf{x},\mathbf{y})/t).
\end{align*}
\end{lemma}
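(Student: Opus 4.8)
The plan is to reduce everything to R\"osler's integral formula \eqref{eq:translation-radial} for the Dunkl translation of a radial function and then read off the three factors (support, volume, polynomial decay) one at a time. Writing $\varphi_t(\mathbf x)=t^{-\mathbf N}\varphi(\mathbf x/t)=\tilde\varphi_t(\|\mathbf x\|)$, the profile $\tilde\varphi_t$ is supported in $[0,t]$ and satisfies $\|\tilde\varphi_t\|_\infty\le Ct^{-\mathbf N}$. By \eqref{eq:translation-radial},
\begin{equation*}
\varphi_t(\mathbf x,\mathbf y)=\tau_{\mathbf x}\varphi_t(-\mathbf y)=\int_{\mathbb R^N}\tilde\varphi_t\big(A(\mathbf x,\mathbf y,\eta)\big)\,d\mu_{\mathbf x}(\eta),
\end{equation*}
so all the work is to estimate this integral, using only that $\mu_{\mathbf x}$ is a probability measure supported in $\operatorname{conv}\mathcal O(\mathbf x)$.

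The support factor $\chi_{[0,1]}(d(\mathbf x,\mathbf y)/t)$ is the easy part. Since $A(\mathbf x,\mathbf y,\eta)^2=\|\mathbf x\|^2+\|\mathbf y\|^2-2\langle\mathbf y,\eta\rangle$ is an affine function of $\eta$, its minimum over the convex polytope $\operatorname{conv}\mathcal O(\mathbf x)$ is attained at one of the vertices $\sigma(\mathbf x)\in\mathcal O(\mathbf x)$; as $\|\sigma(\mathbf x)\|=\|\mathbf x\|$, that minimum equals $\min_{\sigma\in G}\|\sigma(\mathbf x)-\mathbf y\|^2=d(\mathbf x,\mathbf y)^2$. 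Hence $A(\mathbf x,\mathbf y,\eta)\ge d(\mathbf x,\mathbf y)$ throughout $\operatorname{supp}\mu_{\mathbf x}$, and since $\tilde\varphi_t$ vanishes outside $[0,t]$ the integral is $0$ unless $d(\mathbf x,\mathbf y)\le t$.

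For the size I would compare the profile with a Gaussian of the matching width. Because $\tilde\varphi_t$ is supported in $[0,t]$ and bounded by $Ct^{-\mathbf N}$, while $\tilde h_{t^2}(r)=c_k^{-1}(2t^2)^{-\mathbf N/2}e^{-r^2/(4t^2)}\ge ct^{-\mathbf N}$ for $r\le t$, we get $|\tilde\varphi_t|\le C\tilde h_{t^2}$ pointwise; inserting this into the same formula \eqref{eq:translation-radial} written for $h_{t^2}=\int\tilde h_{t^2}(A)\,d\mu_{\mathbf x}$ (which uses the \emph{same} measure $\mu_{\mathbf x}$) yields $|\varphi_t(\mathbf x,\mathbf y)|\le C\,h_{t^2}(\mathbf x,\mathbf y)$. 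Theorem~\ref{theorem:heat} then delivers the volume factor $V(\mathbf x,\mathbf y,t)^{-1}$, the Gaussian $e^{-cd(\mathbf x,\mathbf y)^2/t^2}\le 1$, and a first polynomial factor $(1+\|\mathbf x-\mathbf y\|/t^2)^{-2}$.

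The hard part is upgrading this to the sharp factor $(1+\|\mathbf x-\mathbf y\|/t)^{-2}$. For $t\le 1$ one has $t^2\le t$, so $(1+\|\mathbf x-\mathbf y\|/t^2)^{-2}\le(1+\|\mathbf x-\mathbf y\|/t)^{-2}$ and the heat comparison already closes the argument. The genuine obstacle is $t>1$: then the comparison loses a factor precisely when the orbit of $\mathbf x$ is larger than $t$ and $\mathbf y$ sits near a distant reflection $\sigma(\mathbf x)$, where $d(\mathbf x,\mathbf y)$ is small but $\|\mathbf x-\mathbf y\|$ is large, so replacing the sharp cutoff $\tilde\varphi_t$ by a Gaussian is too wasteful. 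In that regime I would instead estimate the mass directly, proving
\begin{equation*}
\mu_{\mathbf x}\big(\{\eta:\,A(\mathbf x,\mathbf y,\eta)\le t\}\big)\le C\,\Big(1+\tfrac{\|\mathbf x-\mathbf y\|}{t}\Big)^{-2} t^{\mathbf N}\,V(\mathbf x,\mathbf y,t)^{-1},
\end{equation*}
which is exactly the geometric control of $\mu_{\mathbf x}$ near distant orbit points that is developed in the course of proving Theorem~\ref{theorem:heat}. Combining it with the trivial bound $|\varphi_t(\mathbf x,\mathbf y)|\le \|\tilde\varphi_t\|_\infty\,\mu_{\mathbf x}(\{A\le t\})\le Ct^{-\mathbf N}\mu_{\mathbf x}(\{A\le t\})$ then finishes the proof.
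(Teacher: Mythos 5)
Your overall strategy --- R\"osler's formula \eqref{eq:translation-radial} plus domination of the radial profile by a Gaussian, then Theorem~\ref{theorem:heat} --- is exactly the route the paper intends (it gives no proof, only the citation to [DzH1, Corollary 3.5]). Your support argument is correct and complete: the minimum of the affine function $\eta\mapsto A(\mathbf{x},\mathbf{y},\eta)^2$ over $\operatorname{conv}\mathcal{O}(\mathbf{x})$ is attained at an extreme point, hence at some $\sigma(\mathbf{x})$, and equals $d(\mathbf{x},\mathbf{y})^2$. The comparison $|\tilde\varphi_t|\leq C\tilde h_{t^2}$ and the resulting bound $|\varphi_t(\mathbf{x},\mathbf{y})|\leq C\,h_{t^2}(\mathbf{x},\mathbf{y})$ are also correct.

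The gap is the case $t>1$. The measure estimate
\begin{equation*}
\mu_{\mathbf{x}}\big(\{\eta:\,A(\mathbf{x},\mathbf{y},\eta)\le t\}\big)\le C\Big(1+\tfrac{\|\mathbf{x}-\mathbf{y}\|}{t}\Big)^{-2}t^{\mathbf{N}}\,V(\mathbf{x},\mathbf{y},t)^{-1}
\end{equation*}
that you invoke there is not proved, and it is essentially \emph{equivalent} to the lemma you are trying to establish (take $\varphi\ge c>0$ on $B(0,1)$ and the two statements imply each other up to constants). Asserting that it ``is developed in the course of proving Theorem~\ref{theorem:heat}'' does not discharge it; as written, the hard half of the proof is circular. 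The clean fix is to observe that the statement is scale invariant, so the case split is unnecessary: from the definition of $\tau_{\mathbf{x}}$ via the Dunkl transform and the homogeneity $E(i\lambda\xi,\mathbf{x})=E(i\xi,\lambda\mathbf{x})$ one gets $\varphi_t(\mathbf{x},\mathbf{y})=t^{-\mathbf{N}}\varphi_1(\mathbf{x}/t,\mathbf{y}/t)$, while $V(\mathbf{x}/t,\mathbf{y}/t,1)=t^{-\mathbf{N}}V(\mathbf{x},\mathbf{y},t)$, $\|\mathbf{x}/t-\mathbf{y}/t\|=\|\mathbf{x}-\mathbf{y}\|/t$ and $d(\mathbf{x}/t,\mathbf{y}/t)=d(\mathbf{x},\mathbf{y})/t$; hence it suffices to treat $t=1$, where your own Gaussian comparison with $h_1$ already produces the sharp factor $(1+\|\mathbf{x}-\mathbf{y}\|)^{-2}$. (Alternatively: in the source [DzH1] the heat bound carries the factor $(1+\|\mathbf{x}-\mathbf{y}\|/\sqrt{t}\,)^{-2}$, with which your comparison at time $t^2$ gives the sharp factor directly for every $t$; the mismatch you fought against comes from taking the displayed form of \eqref{eq:Gauss} at face value.)
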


\section{ \texorpdfstring{$L^2(dw)$}{L2} estimates}
In the present section we assume that $K(\mathbf x)$ satisfies \eqref{eq:uni_on_annulus} and \eqref{eq:assumption1} with $s=1$, that is, 
\begin{equation}\label{eq:K_der1} \tag{$\text{D}(1)$}
    |\partial^{\beta}K(\mathbf{x})| \leq C_{\beta}\|\mathbf{x}\|^{-\mathbf{N}-|\beta|} \text{ for }|\beta| \leq 1.
\end{equation}

Our aim is to we prove that convolution operators with the truncated kernels $K^{\{t\}}$ are uniformly bounded on $L^2(dw).$ Then we add the assumption  \eqref{eq:limitA} and show the  $L^2$-bound of the limiting operator.

We start by the easy observation that \eqref{eq:K_der1} implies 
\begin{equation}\label{eq:K_der} \tag{$\text{D'}(1)$}
    |T^{\beta}K(\mathbf{x})| \leq C_{\beta}\|\mathbf{x}\|^{-\mathbf{N}-|\beta|} \text{ for }|\beta| \leq 1.
\end{equation}

Let $\phi $ be a fixed $C^\infty(\mathbb{R}^N)$ radial function supported in the unit ball such  that  $\phi(\mathbf x)=1$ for all $\mathbf{x} \in \mathbb{R}^N$ such that $ \| \mathbf x\|\leq 1/2$.

{

For $0<a<b<\infty$, let 
$$ K_{a,\infty}(\mathbf x)=K(\mathbf x)\chi_{\{\mathbf y: a<\| \mathbf y\|\}}(\mathbf x) \quad \text{and} \quad  K_{a,b}(\mathbf x)=K(\mathbf x)\chi_{\{\mathbf y: a<\| \mathbf y\|<b\}}(\mathbf x),$$ 
$$ K^{\{a\}}(\mathbf x)=K(\mathbf x)(1-\phi(a^{-1}\mathbf x)) \quad \text{and}\quad K^{\{a,b\}}(\mathbf x)=K^{\{a\}}(\mathbf x)-K^{\{b\}}(\mathbf x).$$
Let us list the following easily proved properties of the truncated kernels which follow from \eqref{eq:K_der} and~\eqref{eq:uni_on_annulus}:  
$$K_{a,b}, \, K^{\{a,b\}}\in L^1(dw)\cap L^2(dw), \quad  K_{a,\infty}, \, K^{\{a\}}\in L^2(dw),$$ 
$$\text{supp}\, K^{\{a,b\}}\subseteq \{\mathbf x\in\mathbb R^N: a/2\leq \| \mathbf x\|\leq b\},$$ 
\begin{equation}\label{eq:L1compare} \sup_{0<a<b<\infty} \| K_{a,b}-K^{\{a,b\}}\|_{L^1(dw)}=C_0<\infty,  
\end{equation}
$$\lim_{b\to \infty} \| K_{a,b}-K_{a,\infty}\|_{L^2(dw)}=0, \quad \lim_{b\to \infty} \| K^{\{a,b\}}-K^{\{a\}}\|_{L^2(dw)}=0.$$
Consequently, by the Plancherel  identity (see~\eqref{eq:Plancherel}),
\begin{equation}\label{eq:L^2limit}
\lim_{b \to \infty} \| \mathcal FK_{a,b}-\mathcal F K_{a,\infty}\|_{L^2(dw)}=0, \quad \lim_{b \to \infty} \| \mathcal FK^{\{a,b\}}-\mathcal F K^{\{a\}}\|_{L^2(dw)}=0.
\end{equation}
Moreover, there are constants $A',C>0$ such that for all  $0<a<b<\infty$  one has
\begin{align*}
    |K^{\{a,b\}}(\mathbf x)|\leq C\| \mathbf x\|^{-\mathbf N}, \quad  |K^{\{a\}}(\mathbf x)|\leq C\| \mathbf x\|^{-\mathbf N},
\end{align*}
 \begin{align*}    |T_jK^{\{a,b\}}(\mathbf x)| \leq C\| \mathbf x\|^{-\mathbf N-1}, \quad  |T_jK^{\{a\}}(\mathbf x)| \leq C\| \mathbf x\|^{-\mathbf N-1},
\end{align*}
\begin{align}\label{newA}    \Big|\int K^{\{a,b\}}(\mathbf x)\, dw(\mathbf x)\Big| \leq A'.
\end{align}
\begin{lemma}\label{lem:uniform_boundedness0}
\eqref{eq:K_der} and~\eqref{eq:uni_on_annulus} imply that there is a constant $C>0$ such that for all $0<a<b<\infty$ one has  $|\mathcal{F}K_{a,b}(\xi)| \leq C$ and $|\mathcal{F}K^{\{a,b\}}(\xi)| \leq C$ .

\end{lemma}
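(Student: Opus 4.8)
The plan is to establish the uniform bound for $\mathcal F K^{\{a,b\}}$ and then transfer it to $\mathcal F K_{a,b}$. Since $|E(-i\xi,\mathbf x)|\le 1$ and, by \eqref{eq:L1compare}, $\|K_{a,b}-K^{\{a,b\}}\|_{L^1(dw)}\le C_0$ uniformly in $0<a<b<\infty$, we have
$$|\mathcal F K_{a,b}(\xi)-\mathcal F K^{\{a,b\}}(\xi)|\le c_k^{-1}\|K_{a,b}-K^{\{a,b\}}\|_{L^1(dw)}\le c_k^{-1}C_0,$$
so it suffices to bound $\mathcal F K^{\{a,b\}}(\xi)$. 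The value at $\xi=0$ is controlled directly by \eqref{newA}. Fix $\xi\ne 0$ and set $r=\|\xi\|^{-1}$; the whole argument hinges on splitting at this critical scale. Using the radial cutoff $\phi$, I would write
$$\mathcal F K^{\{a,b\}}(\xi)=c_k^{-1}\int K^{\{a,b\}}(\mathbf x)\,\phi(\mathbf x/r)\,E(-i\xi,\mathbf x)\,dw(\mathbf x)+c_k^{-1}\int K^{\{a,b\}}(\mathbf x)\,(1-\phi(\mathbf x/r))\,E(-i\xi,\mathbf x)\,dw(\mathbf x)=:I+II.$$

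For the near part $I$, I would split $E(-i\xi,\mathbf x)=1+(E(-i\xi,\mathbf x)-1)$. The remainder contribution is handled by Corollary~\ref{coro:Roesler}, which gives $|E(-i\xi,\mathbf x)-1|\le C\|\mathbf x\|\|\xi\|$; together with $|K^{\{a,b\}}(\mathbf x)|\le C\|\mathbf x\|^{-\mathbf N}$ and $\int_{\|\mathbf x\|\le r}\|\mathbf x\|^{1-\mathbf N}\,dw\sim r$ (a consequence of the homogeneity of $dw$ and a dyadic summation), this term is bounded by $C\|\xi\|\cdot r=C$. The delicate piece is the constant term $c_k^{-1}\int K^{\{a,b\}}(\mathbf x)\phi(\mathbf x/r)\,dw$, for which only the weak annular cancellation \eqref{eq:uni_on_annulus} is available. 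Here I would observe that $K^{\{a,b\}}(\mathbf x)\phi(\mathbf x/r)=K(\mathbf x)\,n(\|\mathbf x\|)$ for a $C^1$ radial function $n$ that vanishes near the origin, is compactly supported, and has total variation bounded independently of $a,b,r$. Integration by parts in the radial variable against the primitive $\rho\mapsto\int_{a/2<\|\mathbf x\|<\rho}K\,dw$ (which is uniformly bounded by \eqref{eq:uni_on_annulus} and vanishes at the inner endpoint) eliminates the boundary terms and bounds this contribution by a constant times the total variation of $n$.

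For the far part $II$, the restriction $s=1$ rules out the classical second-order integration by parts with the Laplacian, so I would instead use a single first-order integration by parts exploiting that $E(-i\xi,\cdot)$ is an eigenfunction of the Dunkl operator. Indeed, by the symmetry $E(\mathbf x,\mathbf y)=E(\mathbf y,\mathbf x)$ and \eqref{eq:Dunkl_kernel_definition}, $T_\xi^{\mathbf x}E(\mathbf x,-i\xi)=-i\|\xi\|^2E(\mathbf x,-i\xi)$, so $E(-i\xi,\mathbf x)=\tfrac{i}{\|\xi\|^2}T_\xi^{\mathbf x}E(\mathbf x,-i\xi)$. Since the factor $K^{\{a,b\}}(\mathbf x)(1-\phi(\mathbf x/r))$ is $C^1$ and compactly supported away from the origin, the skew-symmetry of $T_\xi$ with respect to $dw$ produces no boundary terms and yields
$$II=-c_k^{-1}\frac{i}{\|\xi\|^2}\int T_\xi\big[K^{\{a,b\}}(1-\phi(\cdot/r))\big](\mathbf x)\,E(\mathbf x,-i\xi)\,dw(\mathbf x).$$
Applying the Leibniz rule for $T_\xi$ with the radial factor $1-\phi(\cdot/r)$ (whose reflection differences vanish, so $T_\xi$ acts as $\partial_\xi$ on it), I would estimate the two resulting terms using $|T_\xi K^{\{a,b\}}|\le C\|\xi\|\,\|\mathbf x\|^{-\mathbf N-1}$ and $|K^{\{a,b\}}|\le C\|\mathbf x\|^{-\mathbf N}$, together with $\int_{\|\mathbf x\|>r/2}\|\mathbf x\|^{-\mathbf N-1}\,dw\sim r^{-1}$ and $w(\{r/2\le\|\mathbf x\|\le r\})\sim r^{\mathbf N}$. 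Both contributions then come out as $\|\xi\|^{-2}\cdot O(\|\xi\|^2)=O(1)$, completing the bound.

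I expect the main obstacle to be the non-cancelled constant contribution in the near region, where the only available tool is the weak annular cancellation \eqref{eq:uni_on_annulus}: the radial integration-by-parts device, and the verification that the radial profile $n$ has uniformly bounded total variation, is the step requiring the most care. The far-region estimate is conceptually the Dunkl-specific heart of the matter—replacing the classical integration by parts by the single first-order one based on the eigenfunction identity for $E(-i\xi,\cdot)$ and the skew-symmetry of $T_\xi$—but once that identity is in place the remaining estimates are routine size computations.
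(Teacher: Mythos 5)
Your argument is correct and follows essentially the same route as the paper's: reduce to $\mathcal F K^{\{a,b\}}$ via the uniform $L^1$ comparison \eqref{eq:L1compare}, split at the critical scale $\|\xi\|^{-1}$, handle the near part by Corollary~\ref{coro:Roesler} plus the annular cancellation \eqref{eq:uni_on_annulus}, and handle the far part by one integration by parts against a Dunkl operator using the eigenfunction relation \eqref{eq:Dunkl_kernel_definition} and skew-symmetry. The only (immaterial) variations are that the paper realizes the split as $K^{\{a,t\}}+K^{\{t,b\}}$ rather than with an extra cutoff $\phi(\cdot/r)$, invokes the ready-made bound \eqref{newA} for the mean instead of your radial integration by parts, and uses a single well-chosen $T_j$ (dividing by $\xi_j$) instead of $T_\xi$ (dividing by $\|\xi\|^2$).
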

\begin{proof}
Thanks to \eqref{eq:L1compare} it suffices to prove the second inequality. Assume first that  $\xi \in \mathbb{R}^N$ satisfies $a \leq \|\xi\|^{-1} \leq b$. Put $t=\|\xi\|^{-1}$. We have $K^{\{a,b\}}=K^{\{a,t\}}+K^{\{t,b\}}$ and, consequently, 
\begin{equation}\label{eq:first_splitting}
\begin{split}
    \mathcal{F}K^{\{a,b\}}(\xi)=\mathcal{F}K^{\{a,t\}}(\xi)+\mathcal{F}K^{\{t,b\}}(\xi)
    &=:I_1+I_2.
\end{split}
\end{equation}
In order to estimate $I_1$, we write
\begin{align*}
    I_1=c_k^{-1}\int K^{\{a,t\}} (\mathbf{x})(E(-i\xi,\mathbf{x})-1)\,dw(\mathbf{x})+c_k^{-1}\int K^{\{a,t\}}(\mathbf{x})\,dw(\mathbf{x})=:I_{1,1}+I_{1,2}.
\end{align*}
Clearly, by~\eqref{newA} we get $|I_{1,2}| \leq C$. For $I_{1,1}$, by Corollary~\ref{coro:Roesler} and~\eqref{eq:K_der}, we obtain
\begin{align*}
    |I_{1,1}| \leq C\|\xi\|\int_{\|\mathbf{x}\| \leq t}\|\mathbf{x}\|^{-\mathbf{N}+1}\,dw(\mathbf{x}) \leq C.
\end{align*}
We now turn to estimate $I_2$.  Choose $j \in \{1,2,\ldots,N\}$ such that $|\xi_j| \geq {N}^{-1/2}\|\xi\|$. Then, thanks to~\eqref{eq:Dunkl_kernel_definition}, we have
\begin{align*}
        |I_2|& \leq C\sqrt{N}\|\xi\|^{-1} \Big|\int_{\mathbb{R}^N} K^{\{t,b\}}(\mathbf x)T_jE(-i
        \xi ,\mathbf x )\, dw(\mathbf x)\Big|\\
        &\leq C\sqrt{N}\|\xi\|^{-1} \Big|\int_{\mathbb{R}^N} T_j K^{\{t,b\}}(\mathbf x)E(-i
        \xi ,\mathbf x )\, dw(\mathbf x)\Big|\\
        &\leq C\sqrt{N}\|\xi\|^{-1} \Big|\int_{\frac{1}{2} t\leq\|\mathbf x\| } \| \mathbf x\|^{-\mathbf N-1} \, dw(\mathbf x)\Big| \leq C'.
\end{align*}
The cases $\|\xi\|^{-1}< a $ or $\|\xi\|^{-1}>b$ can be treated similarly (we have to deal with just one integral in~\eqref{eq:first_splitting}).
\end{proof}

From \eqref{eq:L^2limit} and Lemma \ref{lem:uniform_boundedness0} we easily deduce the following corollary.  
\begin{corollary}\label{coro:uniform_K_a} 
\eqref{eq:K_der} and~\eqref{eq:uni_on_annulus} imply that there is a constant $C>0$ such that for every $0<a<\infty$ we have  $$\|\mathcal FK^{\{a\}}\|_{L^\infty} \leq C, \quad \text{and} \quad  \|\mathcal FK_{a,\infty}\|_{L^\infty} \leq C.$$
\end{corollary}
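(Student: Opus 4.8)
The plan is to pass to the limit $b\to\infty$ in the uniform bounds supplied by Lemma~\ref{lem:uniform_boundedness0}, transferring the pointwise estimate from the doubly-truncated kernels $K^{\{a,b\}}$ and $K_{a,b}$ to their limits $K^{\{a\}}$ and $K_{a,\infty}$ by means of the $L^2(dw)$-convergence recorded in \eqref{eq:L^2limit}.

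Concretely, I would fix $a\in(0,\infty)$. By Lemma~\ref{lem:uniform_boundedness0} there is a constant $C$, \emph{independent of both $a$ and $b$}, with $|\mathcal FK^{\{a,b\}}(\xi)|\le C$ for every $\xi\in\mathbb R^N$ and every $b>a$. Choosing a sequence $b_n\to\infty$, the second limit in \eqref{eq:L^2limit} gives $\mathcal FK^{\{a,b_n\}}\to\mathcal FK^{\{a\}}$ in $L^2(dw)$; passing to a subsequence I may assume the convergence holds for $dw$-almost every $\xi$. Since each $\mathcal FK^{\{a,b_n\}}$ is bounded by $C$, so is the almost-everywhere limit, whence $|\mathcal FK^{\{a\}}(\xi)|\le C$ for a.e.\ $\xi$, that is, $\|\mathcal FK^{\{a\}}\|_{L^\infty}\le C$. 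The estimate for $K_{a,\infty}$ follows in exactly the same way, using the first limit in \eqref{eq:L^2limit} together with the bound $|\mathcal FK_{a,b}(\xi)|\le C$.

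There is essentially no obstacle here: the only subtlety worth flagging is that $L^2(dw)$-convergence delivers pointwise convergence only along a subsequence and only almost everywhere, but since the desired $\|\cdot\|_{L^\infty}$-bound is itself an almost-everywhere statement, this is entirely sufficient. The uniformity of $C$ in $a$ is inherited directly from the uniformity (in both $a$ and $b$) of the constant produced in Lemma~\ref{lem:uniform_boundedness0}, so nothing further is needed.
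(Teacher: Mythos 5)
Your argument is correct and is exactly the one the paper intends: the paper deduces the corollary "from \eqref{eq:L^2limit} and Lemma~\ref{lem:uniform_boundedness0}", i.e., the uniform pointwise bound on $\mathcal FK^{\{a,b\}}$ and $\mathcal FK_{a,b}$ combined with the $L^2(dw)$-convergence as $b\to\infty$, which (along a subsequence, almost everywhere) transfers the bound to $\mathcal FK^{\{a\}}$ and $\mathcal FK_{a,\infty}$. Your remark about a.e.\ convergence along a subsequence being sufficient for an $L^\infty$ bound is the right way to make the "easily deduce" precise.
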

 
\begin{corollary}\label{uniform_L2}
There is a constant $C>0$ such that for all $0<a<b<\infty$ we have 
\begin{equation*}
    \| K^{\{a,b\}}*f\|_{L^2(dw)}+  \| K_{a,b}*f\|_{L^2(dw)}+ 
   \| K^{\{a\}}*f\|_{L^2(dw)}+ \| K_{a,\infty}*f\|_{L^2(dw)}  \leq C\| f\|_{L^2(dw)}.
\end{equation*}
Moreover, for every $a>0$  and $f\in L^2(dw)$ we have
\begin{equation}\label{eq:L2strong}
\lim_{b\to\infty}  \|K^{\{a,b\}} f-K^{\{a\}}f   \|_{L^2(dw)}=0. 
\end{equation}
\end{corollary}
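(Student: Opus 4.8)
The plan is to reduce all four bounds to the Plancherel identity \eqref{eq:Plancherel} combined with the uniform $L^\infty$ estimates for the Dunkl transforms of the truncated kernels that have just been established. For any kernel $h$ with $\mathcal F h\in L^\infty$ and any $f\in L^2(dw)$, the Fourier-multiplier description of the Dunkl convolution gives $\mathcal F(h*f)=c_k(\mathcal F h)(\mathcal F f)$; since $\mathcal F h\in L^\infty$ and $\mathcal F f\in L^2(dw)$, the product lies in $L^2(dw)$, so $h*f$ is a well-defined element of $L^2(dw)$ even when $h\notin L^1(dw)$. Applying \eqref{eq:Plancherel} and Hölder's inequality then yields
\begin{equation*}
\|h*f\|_{L^2(dw)}=c_k\|(\mathcal F h)(\mathcal F f)\|_{L^2(dw)}\leq c_k\|\mathcal F h\|_{L^\infty}\|f\|_{L^2(dw)}.
\end{equation*}

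First I would specialize this estimate to $h\in\{K^{\{a,b\}},K_{a,b},K^{\{a\}},K_{a,\infty}\}$. The bounds $\|\mathcal F K^{\{a,b\}}\|_{L^\infty}\leq C$ and $\|\mathcal F K_{a,b}\|_{L^\infty}\leq C$ are Lemma~\ref{lem:uniform_boundedness0}, while $\|\mathcal F K^{\{a\}}\|_{L^\infty}\leq C$ and $\|\mathcal F K_{a,\infty}\|_{L^\infty}\leq C$ are Corollary~\ref{coro:uniform_K_a}, all with constants independent of $a,b$. Summing the four resulting inequalities gives the first assertion.

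For the limit \eqref{eq:L2strong} I would write $K^{\{a,b\}}*f-K^{\{a\}}*f=(K^{\{a,b\}}-K^{\{a\}})*f$ and set $g_b:=\mathcal F K^{\{a,b\}}-\mathcal F K^{\{a\}}$, so that by \eqref{eq:Plancherel}
\begin{equation*}
\|K^{\{a,b\}}*f-K^{\{a\}}*f\|_{L^2(dw)}=c_k\|g_b\,\mathcal F f\|_{L^2(dw)}.
\end{equation*}
Two facts about $g_b$ are at hand: $\|g_b\|_{L^2(dw)}\to0$ as $b\to\infty$ by \eqref{eq:L^2limit}, and $\|g_b\|_{L^\infty}\leq 2C$ uniformly in $b$ by Lemma~\ref{lem:uniform_boundedness0} and Corollary~\ref{coro:uniform_K_a}. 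The hard part is that $\mathcal F f$ lies only in $L^2(dw)$, so neither the $L^2$-smallness nor the $L^\infty$-boundedness of $g_b$ controls $\|g_b\,\mathcal F f\|_{L^2}$ on its own; the two must be combined.

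To do this I would truncate $\mathcal F f$: given $\varepsilon>0$, split $\mathcal F f=h_1+h_2$ with $h_1=(\mathcal F f)\chi_{\{|\mathcal F f|\leq M\}}$ and $h_2=(\mathcal F f)\chi_{\{|\mathcal F f|>M\}}$, choosing $M$ large enough that $\|h_2\|_{L^2(dw)}<\varepsilon$. Then
\begin{equation*}
\|g_b\,\mathcal F f\|_{L^2(dw)}\leq\|g_b h_1\|_{L^2(dw)}+\|g_b h_2\|_{L^2(dw)}\leq M\|g_b\|_{L^2(dw)}+2C\varepsilon,
\end{equation*}
and letting $b\to\infty$ kills the first term, giving $\limsup_{b\to\infty}\|g_b\,\mathcal F f\|_{L^2(dw)}\leq2C\varepsilon$. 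Since $\varepsilon$ is arbitrary, \eqref{eq:L2strong} follows. (Equivalently, one could pass to an a.e.\ convergent subsequence of $g_b\to0$ and apply dominated convergence with majorant $(2C)^2|\mathcal F f|^2$, together with the subsequence principle.) This combination of an $L^2$-smallness with a uniform $L^\infty$-bound is the only step beyond a direct appeal to Plancherel and the transform estimates already in hand.
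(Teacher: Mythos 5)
Your proof is correct and follows essentially the same route as the paper, whose own proof is a one-line appeal to Lemma~\ref{lem:uniform_boundedness0}, Corollary~\ref{coro:uniform_K_a}, the Plancherel identity, and the multiplier form of the Dunkl convolution. You merely spell out the details the paper leaves implicit, in particular the standard combination of the $L^2$-convergence from \eqref{eq:L^2limit} with the uniform $L^\infty$-bound needed to justify \eqref{eq:L2strong}.
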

\begin{proof}
The  corollary follows directly  from Lemma \ref{lem:uniform_boundedness0},  Corollary \ref{coro:uniform_K_a}, the Plancherel identity~\eqref{eq:Plancherel}, and the definition of the Dunkl convolution.  
\end{proof}
 From now on to the end of the paper  we assume additionally  that \eqref{eq:limitA} is satisfied by $K$ as well. 

\begin{lemma}\label{lem:L}
Property~\eqref{eq:limitA} implies 
 \begin{align*}
     \lim_{a\to 0} \int_{\| \mathbf x\|<1} K^{\{a\}} (\mathbf x)\, dw(\mathbf x)=L.
 \end{align*}
 \end{lemma}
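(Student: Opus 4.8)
The plan is to isolate the one genuinely delicate contribution and to handle it through the cancellation in \eqref{eq:limitA} rather than through a size estimate. First I would record that, since $\phi$ is radial with $\phi\equiv1$ on $\{\|\mathbf{x}\|\le1/2\}$ and $\operatorname{supp}\phi\subseteq B(0,1)$, the factor $1-\phi(a^{-1}\mathbf{x})$ equals $1$ for $\|\mathbf{x}\|\ge a$ and $0$ for $\|\mathbf{x}\|<a/2$. Writing $J_a:=\int_{\|\mathbf{x}\|<1}K^{\{a\}}(\mathbf{x})\,dw(\mathbf{x})$, this splits $J_a$ as the annulus integral $\int_{a<\|\mathbf{x}\|<1}K\,dw$, which tends to $L$ by \eqref{eq:limitA}, plus the transition term $\int_{a/2\le\|\mathbf{x}\|<a}K(\mathbf{x})(1-\phi(a^{-1}\mathbf{x}))\,dw(\mathbf{x})$. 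The hard part is exactly this transition term: on $\{a/2\le\|\mathbf{x}\|<a\}$ one has $|K|\lesssim\|\mathbf{x}\|^{-\mathbf{N}}\sim a^{-\mathbf{N}}$ by \eqref{eq:K_der1}, while $w(\{a/2\le\|\mathbf{x}\|<a\})\sim w(B(0,a))\sim a^{\mathbf{N}}$, so a crude absolute estimate only shows it is $O(1)$; decay must come from the cancellation of $K$ captured by \eqref{eq:limitA}.

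To exploit that cancellation I would pass to polar coordinates adapted to $dw$: writing $\mathbf{x}=\rho\omega$ gives $dw(\mathbf{x})=\rho^{\mathbf{N}-1}\,d\rho\,d\sigma_k(\omega)$ for a finite measure $d\sigma_k$ on the unit sphere, and the radiality of $\phi$ lets me write $\phi(a^{-1}\mathbf{x})=\Phi(\rho/a)$ with $\Phi\in C^\infty([0,\infty))$, $\Phi\equiv1$ on $[0,1/2]$ and $\Phi\equiv0$ on $[1,\infty)$. Introducing the primitive
\[
F(\rho)=\int_{\rho<\|\mathbf{x}\|<1}K(\mathbf{x})\,dw(\mathbf{x}),\qquad 0<\rho<1,
\]
the size bound on $K$ makes $g(\rho):=\int_{S^{N-1}}K(\rho\omega)\rho^{\mathbf{N}-1}\,d\sigma_k(\omega)$ continuous, so $F$ is $C^1$ with $F'=-g$; moreover $F(\rho)\to L$ as $\rho\to0^+$ by \eqref{eq:limitA} and $F(1^-)=0$. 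Then the angular integration collapses, and since $1-\Phi(\rho/a)$ vanishes for $\rho<a/2$ one gets
\[
J_a=\int_0^1\bigl(1-\Phi(\rho/a)\bigr)g(\rho)\,d\rho=-\int_{a/2}^1\bigl(1-\Phi(\rho/a)\bigr)F'(\rho)\,d\rho.
\]

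The last step is an integration by parts. Because $1-\Phi(\rho/a)$ vanishes at $\rho=a/2$ and $F$ vanishes at $\rho=1$, both boundary terms disappear and the change of variables $u=\rho/a$ yields
\[
J_a=-\int_{1/2}^1\Phi'(u)\,F(au)\,du.
\]
Since $au\in(0,a]$ for $u\in[1/2,1]$, the uniform convergence $F(au)\to L$ as $a\to0$ is immediate from $F(\rho)\to L$, and together with $\int_{1/2}^1\Phi'(u)\,du=\Phi(1)-\Phi(1/2)=-1$ this gives $J_a\to L$, proving the lemma. The only points needing care are the standard validity of the $dw$-polar decomposition and the $C^1$-regularity of $F$ (both guaranteed by $K\in C^s(\mathbb{R}^N\setminus\{0\})$ together with \eqref{eq:K_der1}); everything else is the integration by parts that converts the absence of a pointwise bound into the usable cancellation of \eqref{eq:limitA}.
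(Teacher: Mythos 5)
Your argument is correct and is essentially the paper's own proof: both pass to polar coordinates for $dw$, introduce the primitive $F(\rho)=\int_{\rho<\|\mathbf{x}\|<1}K\,dw$, integrate by parts to move the derivative onto the radial cutoff (whose derivative is supported in $[a/2,a]$), and conclude from $F(\rho)\to L$ together with the total mass $-1$ of the cutoff's derivative. The only cosmetic difference is your substitution $u=\rho/a$, versus the paper's direct splitting of $-\int_0^1(\widetilde{\phi}(a^{-1}r))'F(r)\,dr$ into a term equal to $L$ and a remainder bounded by $\max_{r\in[0,a]}|F(r)-L|$.
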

 \begin{proof}
 Let us define $\widetilde{\phi}(\|\mathbf{x}\|)=\phi(\mathbf{x})$. For $0< r\leq 1$ we set
 \begin{equation}\label{eq:A_F}
     F(r)=\int_{r<\|\mathbf{x}\|<1}K(\mathbf{x})\,dw(\mathbf{x})=\int_r^{1}\int_{\mathbb{S}^{N-1}}r_1^{N-1}K(r_1\omega)\prod_{\alpha \in R}|\langle r_1\omega,\alpha \rangle|^{k(\alpha)}\,d{\boldsymbol\sigma}(\omega)\,dr_1,
 \end{equation}
 where $\boldsymbol\sigma$ is the spherical measure. Note that $F$ is differentiable and satisfies
 \begin{align*}
     F'(r)=-\int_{\mathbb{S}^{N-1}}r^{N-1}K(r\omega)\prod_{\alpha \in R}|\langle r\omega,\alpha \rangle|^{k(\alpha)}\,d{\boldsymbol\sigma}(\omega).
 \end{align*}
 Consequently, by the definition of $K^{\{a\}}$ and integration by parts we have
 \begin{align*}
     \int_{\| \mathbf x\|<1} K^{\{a\}} (\mathbf x)\, dw(\mathbf x)&=\int_0^1 \big(1- \widetilde{\phi}(a^{-1}r)\big) \int_{\mathbb{S}^{N-1}}r^{N-1}K(r\omega)\prod_{\alpha \in R}|\langle r\omega,\alpha \rangle|^{k(\alpha)}\,d{\boldsymbol\sigma}(\omega)\,dr\\&=\int_0^1 \big(1- \widetilde{\phi}(a^{-1}r)\big)(-F'(r))\,dr\\
     &=-\int_0^1 (\widetilde{\phi}(a^{-1}r))'F(r)\,dr.
 \end{align*}
 We write
 \begin{equation}\label{eq:A_split}
 \begin{split}
    - \int_0^1 (\widetilde{\phi}(a^{-1}r))'F(r)\,dr&=-\int_0^1 (\widetilde{\phi}(a^{-1}r))'(F(r)-L)\,dr-L\int_0^1 (\widetilde{\phi}(a^{-1}r))'\,dr\\&=:S_1(a)+S_2(a).
 \end{split}
 \end{equation}
 Clearly,  for all $a<1/4$, we have
 \begin{equation}\label{eq:lim_A_2}
     S_2(a)=L.
 \end{equation}
 Note that $\text{supp}\, (\widetilde{\phi}(a^{-1}\cdot))'\subseteq [a/2,a]$, hence  
 \begin{align*}
     |S_1(a)| &\leq \int_0^1| (\widetilde{\phi}(a^{-1}r))'||F(r)-L| \,dr
     =\int_{a/2}^{a}|(\widetilde{\phi}(a^{-1}r))'||F(r)-L| \,dr \\&\leq \max_{r \in [a/2,a]}|F(r)-L|\int_{a/2}^{a}|a^{-1}\widetilde{\phi}'(a^{-1}r)|\,dr \leq C\|(\widetilde{\phi})'\|_{L^{\infty}}\max_{r \in [0,a]}|F(r)-L|.
 \end{align*}
Consequently, thanks to~\eqref{eq:limitA} and~\eqref{eq:A_F} we obtain $\lim_{a \to 0}S_1(a)=0$. Combining this fact with~\eqref{eq:lim_A_2} and~\eqref{eq:A_split} we get the claim.
 \end{proof}
\begin{lemma}\label{lem:K_a}
Under~\eqref{eq:K_der}, \eqref{eq:uni_on_annulus}, and \eqref{eq:limitA}  for almost every $\xi\in\mathbb R^N$, the limit
\begin{align*}
    \lim_{a\to 0} \mathcal FK_{a,\infty}(\xi)    
\end{align*}
exists and defines a bounded function denoted by  $\mathcal FK(\xi)$.
\end{lemma}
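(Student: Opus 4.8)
The plan is to reduce everything to a Cauchy criterion in the truncation parameter $a$ for a single, genuinely $L^1$ Fourier transform, so that all the delicate behaviour is concentrated near the origin. First I would isolate the $a$-dependence: for $0<a<1$ one has, as elements of $L^2(dw)$, $K_{a,\infty}=K_{a,1}+K_{1,\infty}$, where $K_{a,1}\in L^1(dw)\cap L^2(dw)$ carries all the $a$-dependence and $K_{1,\infty}\in L^2(dw)$ is fixed. Since $\mathcal{F}$ is a linear isometry on $L^2(dw)$ (see~\eqref{eq:Plancherel}) and agrees on $L^1(dw)\cap L^2(dw)$ with the continuous integral representation, we get, for a.e. $\xi$,
$$\mathcal{F}K_{a,\infty}(\xi)=\mathcal{F}K_{a,1}(\xi)+\mathcal{F}K_{1,\infty}(\xi),$$
where $\mathcal{F}K_{a,1}(\xi)=c_k^{-1}\int_{a<\|\mathbf{x}\|<1}K(\mathbf{x})E(-i\xi,\mathbf{x})\,dw(\mathbf{x})$ is continuous in $\xi$ and $\mathcal{F}K_{1,\infty}$ is a fixed a.e.-finite representative. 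Hence it suffices to show that $\lim_{a\to 0}\mathcal{F}K_{a,1}(\xi)$ exists for every $\xi\in\mathbb{R}^N$.

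To this end I would fix $\xi$ and take $0<a<a'<1$. Because $K_{a,1}-K_{a',1}=K_{a,a'}\in L^1(dw)$,
$$\mathcal{F}K_{a,1}(\xi)-\mathcal{F}K_{a',1}(\xi)=c_k^{-1}\int_{a<\|\mathbf{x}\|<a'}K(\mathbf{x})E(-i\xi,\mathbf{x})\,dw(\mathbf{x}),$$
and I would split this into $c_k^{-1}\int_{a<\|\mathbf{x}\|<a'}K(\mathbf{x})(E(-i\xi,\mathbf{x})-1)\,dw(\mathbf{x})$ plus $c_k^{-1}\int_{a<\|\mathbf{x}\|<a'}K(\mathbf{x})\,dw(\mathbf{x})$. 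For the first term, Corollary~\ref{coro:Roesler} and~\eqref{eq:K_der} bound the integrand by $C\|\xi\|\,\|\mathbf{x}\|^{1-\mathbf{N}}$, and the homogeneity estimate $\int_{\|\mathbf{x}\|<a'}\|\mathbf{x}\|^{1-\mathbf{N}}\,dw(\mathbf{x})\leq Ca'$ (already used in Lemma~\ref{lem:uniform_boundedness0}) shows the term is $O(\|\xi\|\,a')$, hence tends to $0$ as $a'\to 0$. For the second term, writing $\int_{a<\|\mathbf{x}\|<a'}K\,dw=\int_{a<\|\mathbf{x}\|<1}K\,dw-\int_{a'<\|\mathbf{x}\|<1}K\,dw$ and invoking~\eqref{eq:limitA}, both integrals converge to $L$, so their difference tends to $0$ as $a,a'\to 0$. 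Thus $\{\mathcal{F}K_{a,1}(\xi)\}$ is Cauchy and $G(\xi):=\lim_{a\to 0}\mathcal{F}K_{a,1}(\xi)$ exists for every $\xi$.

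Combining the two steps, $\mathcal{F}K(\xi):=G(\xi)+\mathcal{F}K_{1,\infty}(\xi)$ is the desired a.e. limit of $\mathcal{F}K_{a,\infty}(\xi)$. Boundedness is then immediate from Corollary~\ref{coro:uniform_K_a}: picking any sequence $a_n\downarrow 0$, the uniform bound gives $|\mathcal{F}K_{a_n,\infty}(\xi)|\leq C$ for all $n$ off a countable union of null sets, and letting $n\to\infty$ yields $|\mathcal{F}K(\xi)|\leq C$ for a.e. $\xi$.

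I would expect the only delicate point to be the bookkeeping separating the $L^2$-defined transform $\mathcal{F}K_{a,\infty}$ from its pointwise values: since $K_{a,\infty}\notin L^1(dw)$, one cannot write $\mathcal{F}K_{a,\infty}$ directly as an absolutely convergent integral, and it is essential to push all the $a$-dependence into the honestly $L^1$ piece $K_{a,1}$ before taking pointwise limits (this is also why the conclusion is stated only for a.e. $\xi$). The genuine analytic content — cancellation near the origin supplied by~\eqref{eq:limitA} for the mean term, and smallness of the oscillatory remainder from Corollary~\ref{coro:Roesler} — is then routine.
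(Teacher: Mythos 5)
Your proposal is correct and follows essentially the same route as the paper: the key step in both is that the difference of two truncations is the honest $L^1$ Fourier transform of $K$ over an annulus near the origin, which is split into the $E(-i\xi,\mathbf{x})-1$ part (controlled by Corollary~\ref{coro:Roesler} and the size bound on $K$) and the mean part (controlled by~\eqref{eq:limitA}), with boundedness coming from Corollary~\ref{coro:uniform_K_a}. Your extra bookkeeping step of isolating the fixed $L^2$ piece $K_{1,\infty}$ just makes explicit what the paper does implicitly when it writes the difference of transforms as an integral over the annulus.
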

\begin{proof}
 According to Corollary ~\ref{coro:uniform_K_a}, it suffices to show that $\mathcal FK_{a,\infty}(\xi)$ is a Cauchy sequence as $a\to 0$. To this end we write
 \begin{align*}
|\mathcal FK_{a_1,\infty}(\xi)-\mathcal FK_{a_2,\infty}(\xi)|
&\leq c_k^{-1}\Big| \int_{a_1\leq \|\mathbf x\| \leq a_2} K(\mathbf x) \Big(E(-i\xi, \mathbf x) -1\Big)\, dw(\xi) \Big|\\
&\quad + c_k^{-1}\Big|\int_{a_1\leq \|\mathbf x\| \leq a_2} K(\mathbf x)\, dw(\mathbf x)\Big|=:I_1+I_2.
 \end{align*}
 Thanks to Corollary \ref{coro:Roesler},
 $$ I_1\leq  C\|\xi\|  \int_{a_1\leq \|\mathbf x\| \leq a_2} \|\mathbf x\|^{-\mathbf N +1}  dw(\mathbf x)\to 0 $$
 as $a_1$ and $a_2$ tend to $0^+$. The convergence of $I_2$ to 0 is a consequence of \eqref{eq:limitA}.
\end{proof}
As a consequence of Corollary~\ref{coro:uniform_K_a} and Lemma \ref{lem:K_a} we obtain the following theorem.
\begin{theorem}\label{theorem:limiting_operator_L2}
Under~\eqref{eq:K_der}, \eqref{eq:uni_on_annulus}, and \eqref{eq:limitA}, for every $f\in L^2(dw)$ the  limit 
\begin{align*}
    \lim_{a\to 0} K_{a,\infty}*f
\end{align*}
exists in $L^2(dw)$ and defines a operator, which is bounded on $L^2(dw)$ and will be denoted by $Kf$. Moreover, 
\begin{align*}
    Kf=\mathcal F^{-1} (\mathcal FK(\cdot)\mathcal Ff(\cdot)).
\end{align*}
\end{theorem}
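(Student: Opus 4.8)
The plan is to pass to the Dunkl transform, where the convolution becomes pointwise multiplication, and then combine the Plancherel identity~\eqref{eq:Plancherel} with the dominated convergence theorem. By the definition of the Dunkl convolution, $K_{a,\infty}*f=c_k\,\mathcal F^{-1}\big[(\mathcal F K_{a,\infty})(\mathcal F f)\big]$, which is meaningful for every $f\in L^2(dw)$: indeed $\mathcal F K_{a,\infty}\in L^\infty$ by Corollary~\ref{coro:uniform_K_a} and $\mathcal F f\in L^2(dw)$ by~\eqref{eq:Plancherel}, so the product lies in $L^2(dw)$ and $\mathcal F^{-1}$ may be applied to it. Since $\mathcal F^{-1}$ is an isometry on $L^2(dw)$, proving that $K_{a,\infty}*f$ has an $L^2$-limit as $a\to 0$ is equivalent to proving that the products $(\mathcal F K_{a,\infty})(\mathcal F f)$ converge in $L^2(dw)$.

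I would then run the limit entirely on the transform side. By Lemma~\ref{lem:K_a}, $\mathcal F K_{a,\infty}(\xi)\to\mathcal F K(\xi)$ for almost every $\xi$, so $(\mathcal F K_{a,\infty})(\mathcal F f)\to(\mathcal F K)(\mathcal F f)$ pointwise a.e.; by Corollary~\ref{coro:uniform_K_a} the bound $|\mathcal F K_{a,\infty}(\xi)|\leq C$ holds uniformly in $a$, whence $|(\mathcal F K_{a,\infty})(\mathcal F f)|^2\leq C^2|\mathcal F f|^2\in L^1(dw)$. Applying dominated convergence to the squared differences $\big|(\mathcal F K_{a,\infty})(\mathcal F f)-(\mathcal F K)(\mathcal F f)\big|^2$ yields the desired $L^2$-convergence of the products. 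Transporting this through $c_k\mathcal F^{-1}$ and using~\eqref{eq:Plancherel} shows that $K_{a,\infty}*f$ converges in $L^2(dw)$ to $c_k\mathcal F^{-1}\big[(\mathcal F K)(\mathcal F f)\big]=:Kf$, the stated representation (with the normalising constant $c_k$ coming from the convolution convention of Section~\ref{sec:preliminaries}); the limit is independent of the manner in which $a\to0$ because the pointwise limit $\mathcal F K$ is. Boundedness is then read off from the same representation: $\|Kf\|_{L^2(dw)}=c_k\|(\mathcal F K)(\mathcal F f)\|_{L^2(dw)}\leq c_k\|\mathcal F K\|_{L^\infty}\|f\|_{L^2(dw)}\leq C\|f\|_{L^2(dw)}$, where $\|\mathcal F K\|_{L^\infty}\leq C$ since $|\mathcal F K(\xi)|=\lim_{a\to0}|\mathcal F K_{a,\infty}(\xi)|\leq C$ for a.e.\ $\xi$ by Corollary~\ref{coro:uniform_K_a}.

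As to the main obstacle: the substantive analytic work has already been done in Corollary~\ref{coro:uniform_K_a} (the uniform sup-bound on $\mathcal F K_{a,\infty}$) and Lemma~\ref{lem:K_a} (the existence of the a.e.\ pointwise limit $\mathcal F K$), so no hard new estimate is needed here. The only step demanding some care --- and the only place the argument could fail --- is the passage from bounded pointwise a.e.\ convergence of $\mathcal F K_{a,\infty}$ to $L^2$-convergence of the products: the uniform sup-bound by itself does not force $\mathcal F K_{a,\infty}$ to converge in $L^2$, and it is exactly the multiplication by the fixed $L^2$-function $\mathcal F f$ that furnishes the $L^2$-integrable majorant $C|\mathcal F f|$ required for dominated convergence.
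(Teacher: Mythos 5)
Your proof is correct and is exactly the argument the paper intends: the paper gives no written proof, stating only that the theorem is "a consequence of Corollary~\ref{coro:uniform_K_a} and Lemma~\ref{lem:K_a}," and your combination of the uniform $L^\infty$ bound, the a.e.\ convergence of $\mathcal FK_{a,\infty}$, dominated convergence with majorant $C|\mathcal Ff|$, and the Plancherel identity is precisely the routine verification being elided. (The only cosmetic discrepancy is the factor $c_k$ in the representation of $Kf$, which reflects an inconsistency between the paper's two stated convolution formulas rather than an error on your part.)
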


\begin{theorem}\label{theorem:main_L2}
Under~\eqref{eq:K_der}, \eqref{eq:uni_on_annulus}, and \eqref{eq:limitA}  for almost every $\xi\in\mathbb R^N$,
\begin{align*}
  \lim_{ a\to 0} \mathcal FK^{\{a\}}(\xi)  = \mathcal FK(\xi),
\end{align*}
where $\mathcal FK(\xi)$ is defined in Lemma~\ref{lem:K_a}. Moreover, for every $f\in L^2(dw)$ the limit 
\begin{align*}
    \lim_{a\to 0} K^{\{a\}}*f    
\end{align*}
exists in $L^2(dw)$ and is equal to $Kf$ (see Theorem \ref{theorem:limiting_operator_L2}). 
\end{theorem}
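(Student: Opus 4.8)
The plan is to compare $K^{\{a\}}$ directly with $K_{a,\infty}$, for which both conclusions are already available (pointwise convergence of the transform from Lemma~\ref{lem:K_a}, and $L^2$-convergence of the operator from Theorem~\ref{theorem:limiting_operator_L2}), and then to show that the difference kernel contributes nothing in the limit. Set $g_a = K_{a,\infty} - K^{\{a\}}$. Since $\phi$ is radial, supported in the unit ball, and equals $1$ on $\{\|\mathbf{x}\| \leq 1/2\}$, a direct inspection of the definitions shows that $g_a$ is supported in the annulus $\{a/2 \leq \|\mathbf{x}\| \leq a\}$ and obeys $|g_a(\mathbf{x})| \leq C\|\mathbf{x}\|^{-\mathbf{N}}$ there by~\eqref{eq:K_der}; in particular $g_a \in L^1(dw)\cap L^2(dw)$ with a uniform $L^1$-bound.

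First I would prove the pointwise statement $\lim_{a\to 0}\mathcal{F}K^{\{a\}}(\xi) = \mathcal{F}K(\xi)$. Since $\mathcal{F}K^{\{a\}} = \mathcal{F}K_{a,\infty} - \mathcal{F}g_a$ and $\mathcal{F}K_{a,\infty}(\xi)\to\mathcal{F}K(\xi)$ for a.e.\ $\xi$ by Lemma~\ref{lem:K_a}, it suffices to check that $\mathcal{F}g_a(\xi)\to 0$ for every fixed $\xi$. I would split
\begin{align*}
\mathcal{F}g_a(\xi) = c_k^{-1}\int g_a(\mathbf{x})\big(E(-i\xi,\mathbf{x})-1\big)\,dw(\mathbf{x}) + c_k^{-1}\int g_a(\mathbf{x})\,dw(\mathbf{x}).
\end{align*}
For the first integral, Corollary~\ref{coro:Roesler}, the support of $g_a$, and the scaling identity $\int_{a/2\leq\|\mathbf{x}\|\leq a}\|\mathbf{x}\|^{-\mathbf{N}+1}\,dw(\mathbf{x}) \leq Ca$ give the bound $C\|\xi\|a\to 0$. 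The second integral is the crux: I would establish the identity $\int g_a\,dw = F(a) - \int_{\|\mathbf{x}\|<1}K^{\{a\}}\,dw$, where $F(a) = \int_{a<\|\mathbf{x}\|<1}K\,dw$, by observing that on $\{a<\|\mathbf{x}\|<1\}$ one has $\phi(a^{-1}\mathbf{x})=0$, so $K^{\{a\}}=K$ there, while the only remaining contribution comes from the annulus $\{a/2\leq\|\mathbf{x}\|\leq a\}$, which is exactly the support of $g_a$. By~\eqref{eq:limitA} we have $F(a)\to L$, and by Lemma~\ref{lem:L} we have $\int_{\|\mathbf{x}\|<1}K^{\{a\}}\,dw\to L$; hence $\int g_a\,dw\to 0$, closing the pointwise claim.

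For the $L^2$-convergence of the operators I would use the same decomposition $K^{\{a\}}*f = K_{a,\infty}*f - g_a*f$ and reduce to Theorem~\ref{theorem:limiting_operator_L2}. Since Dunkl convolution is a Fourier multiplier operator, the Plancherel identity~\eqref{eq:Plancherel} gives
\begin{align*}
\|g_a*f\|_{L^2(dw)}^2 = c_k^2\int_{\mathbb{R}^N}|\mathcal{F}g_a(\xi)|^2\,|\mathcal{F}f(\xi)|^2\,dw(\xi).
\end{align*}
By Corollary~\ref{coro:uniform_K_a} the factor $|\mathcal{F}g_a(\xi)| \leq |\mathcal{F}K_{a,\infty}(\xi)| + |\mathcal{F}K^{\{a\}}(\xi)|$ is bounded uniformly in $a$ and $\xi$, while it tends to $0$ for every $\xi$ by the previous paragraph. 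Dominated convergence with the integrable majorant $C|\mathcal{F}f|^2$ then forces $\|g_a*f\|_{L^2(dw)}\to 0$, and combining this with $K_{a,\infty}*f\to Kf$ from Theorem~\ref{theorem:limiting_operator_L2} yields $K^{\{a\}}*f\to Kf$ in $L^2(dw)$.

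I expect the main obstacle to be the identity $\int g_a\,dw = F(a) - \int_{\|\mathbf{x}\|<1}K^{\{a\}}\,dw$ together with the bookkeeping that both terms converge to the same constant $L$: this is precisely where assumption~\eqref{eq:limitA}, rather than merely~\eqref{eq:uni_on_annulus}, becomes indispensable, and it is the only point at which the cancellation value $L$ enters. Everything else is a uniform-boundedness argument combined with dominated convergence, all of it prepared by the earlier lemmas.
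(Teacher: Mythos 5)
Your proof is correct and follows essentially the same route as the paper: isolate the difference between $K^{\{a\}}$ and the sharp truncation on the annulus $\{a/2\leq\|\mathbf{x}\|\leq a\}$, control the oscillatory part via Corollary~\ref{coro:Roesler} and the size bound $|K(\mathbf{x})|\leq C\|\mathbf{x}\|^{-\mathbf{N}}$, and reduce the mean-value part to Lemma~\ref{lem:L} together with~\eqref{eq:limitA}, after which the $L^2$ statement follows from the uniform multiplier bounds of Corollary~\ref{coro:uniform_K_a} and dominated convergence. If anything, your choice to compare $K^{\{a\}}$ directly with $K_{a,\infty}$ (rather than the paper's $K^{\{a,1\}}$ versus $K_{a,1}$) keeps the support of the difference kernel exactly in $\{a/2\leq\|\mathbf{x}\|\leq a\}$ and makes the bookkeeping cleaner.
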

\begin{proof}
Let $J(a,\xi)=\mathcal F( K^{\{a,1\}})(\xi)-\mathcal F( K_{a,1})(\xi)$. It suffices to show that $\lim_{a\to 0^+}J(a,\xi)=0$ for all $\xi\in\mathbb R^N$. Similarly to the proof of Lemma~\ref{lem:K_a}, we write
\begin{align*}
J(a,\xi)=&c_k^{-1}\int_{\mathbb{R}^N} \Big(K^{\{a,1\}}(\mathbf x)-K_{a,1}(\mathbf x)\Big)\Big( E(-i\xi,\mathbf x)-1\Big)\, dw(\mathbf x)\\
&+ c_k^{-1}\int_{\mathbb{R}^N} \Big(K^{\{a,1\}}(\mathbf x)-K_{a,1}(\mathbf x)\Big)\, dw(\mathbf x)=:J_1(a,\xi)+J_2(a,\xi).
\end{align*}
By Lemma~\ref{lem:L} we have $\lim_{a\to 0^+}J_2(a,\xi)=0$.  To deal with $J_1(a,\xi)$ we note that $$\text{supp}\,(K^{\{a,1\}}-K_{a,1})\subseteq \{\mathbf x: a/2\leq \| \mathbf x\|\leq a\} \quad \text{and}\quad  |K^{\{a,1\}}(\mathbf x)-K_{a,1}(\mathbf x)|\leq C\| \mathbf x\|^{-\mathbf N}.$$
Hence, using Corollary \ref{coro:Roesler}, we get 
$$J_1\leq C\int_{a/2\leq \| \mathbf x\|\leq a} \|\xi\| \|\mathbf x\|^{-\mathbf N+1}\, dw(\mathbf x)\to 0\quad \text{as} \quad a\to 0. $$
\end{proof}

\section{ \texorpdfstring{$L^p(dw)$}{Lp} estimates}

The purpose of this section is to study singular integrals operators on $L^p(dw)$. For this purpose we need to make the following stronger assumption on the kernel $K$, namely that~\eqref{eq:assumption1} holds for $|\beta|\leq s_0$, where $s_0$ is the smallest even positive integer bigger than $\mathbf N\slash 2$, that is,
   \begin{equation}\label{eq:K_der2}\tag{$\text{D}(s_0)$}
    |\partial^{\beta}K(\mathbf{x})| \leq C_{\beta}\|\mathbf{x}\|^{-\mathbf{N}-|\beta|} \text{ for all }|\beta| \leq s_0.
\end{equation}
Clearly,~\eqref{eq:K_der2} implies
\begin{equation}\label{eq:K_der2_Dunkl}
    |T^{\beta}K(\mathbf{x})| \leq C_{\beta}\|\mathbf{x}\|^{-\mathbf{N}-|\beta|} \text{ for all }|\beta| \leq s_0.
\end{equation}
{The goal of this section is to prove the following theorems.
\begin{theorem}\label{theorem:weak_type_truncated1}
Under~\eqref{eq:K_der2} and \eqref{eq:uni_on_annulus}, there is a constant $C>0$ such that for all  $0<a < b<\infty $,  $\lambda>0$, and $f \in L^1(dw)\cap L^2(dw)$ we have
\begin{equation}\label{eq:Psi_nm_weak}
    w(\{\mathbf{x} \in \mathbb{R}^N\,:\,|K^{\{a,b\}}f(\mathbf{x})|>\lambda\}) \leq C\lambda^{-1}\|f\|_{L^1(dw)},
\end{equation}
\begin{equation}\label{eq:Psi_n_weak}
     w(\{\mathbf{x} \in \mathbb{R}^N\,:\,|K^{\{a\}}f(\mathbf{x})|>\lambda\}) \leq C\lambda^{-1}\|f\|_{L^1(dw)}.
\end{equation}
\end{theorem}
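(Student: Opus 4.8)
The plan is to run the Calder\'on--Zygmund machinery on the space of homogeneous type $(\mathbb{R}^N,\|\cdot\|,dw)$, whose doubling is \eqref{eq:doubling}, feeding in the uniform $L^2(dw)$-bound of Corollary~\ref{uniform_L2} as the only quantitative input. First I reduce \eqref{eq:Psi_n_weak} to \eqref{eq:Psi_nm_weak}: by \eqref{eq:L2strong} there is a sequence $b_n\to\infty$ with $K^{\{a,b_n\}}f\to K^{\{a\}}f$ $w$-almost everywhere, so for $0<\lambda'<\lambda$ the set $\{|K^{\{a\}}f|>\lambda\}$ is, up to a null set, contained in $\liminf_n\{|K^{\{a,b_n\}}f|>\lambda'\}$; Fatou's lemma for $w$ and $\lambda'\uparrow\lambda$ then transfer the weak bound, so it suffices to prove \eqref{eq:Psi_nm_weak} uniformly in $a,b$. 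Fix $\lambda>0$ and take the Calder\'on--Zygmund decomposition of $f$ at height $\lambda$ with respect to Euclidean balls, $f=g+\sum_j b_j$, with $\|g\|_{L^\infty}\leq C\lambda$, $\|g\|_{L^1(dw)}\leq\|f\|_{L^1(dw)}$, each $b_j$ supported in $B_j=B(\mathbf{y}_j,r_j)$ with $\int b_j\,dw=0$, and $\sum_j\|b_j\|_{L^1(dw)}\leq C\|f\|_{L^1(dw)}$, $\sum_j w(B_j)\leq C\lambda^{-1}\|f\|_{L^1(dw)}$. For the good part, Corollary~\ref{uniform_L2} and Chebyshev give
$$w(\{|K^{\{a,b\}}g|>\lambda/2\})\leq \frac{4}{\lambda^2}\|K^{\{a,b\}}g\|_{L^2(dw)}^2\leq \frac{C}{\lambda^2}\|g\|_{L^2(dw)}^2\leq \frac{C}{\lambda}\|f\|_{L^1(dw)},$$
since $\|g\|_{L^2(dw)}^2\leq\|g\|_{L^\infty}\|g\|_{L^1(dw)}\leq C\lambda\|f\|_{L^1(dw)}$.

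For the bad part the Dunkl geometry enters. Since $\mathbf{x}\mapsto\tau_{\mathbf{x}}K^{\{a,b\}}(-\mathbf{y})$ concentrates near the whole orbit $\mathcal{O}(\mathbf{y})$ rather than near $\mathbf{y}$, I take the exceptional set to be $G$-invariant, $\Omega^*=\bigcup_j\mathcal{O}(B(\mathbf{y}_j,Cr_j))$. By $G$-invariance of $w$ and \eqref{eq:doubling}, $w(\Omega^*)\leq|G|\sum_j w(B(\mathbf{y}_j,Cr_j))\leq C\lambda^{-1}\|f\|_{L^1(dw)}$, so only $\mathbf{x}\notin\Omega^*$, equivalently $d(\mathbf{x},\mathbf{y}_j)\geq Cr_j$, must be controlled. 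Using $\int b_j\,dw=0$,
$$K^{\{a,b\}}b_j(\mathbf{x})=\int_{B_j}b_j(\mathbf{y})\big(\tau_{\mathbf{x}}K^{\{a,b\}}(-\mathbf{y})-\tau_{\mathbf{x}}K^{\{a,b\}}(-\mathbf{y}_j)\big)\,dw(\mathbf{y}),$$
and the whole matter reduces to the H\"ormander-type bound, uniform in $a,b$,
$$\sup_{r>0}\ \sup_{\|\mathbf{y}-\mathbf{y}_0\|<r}\ \int_{d(\mathbf{x},\mathbf{y}_0)>Cr}\big|\tau_{\mathbf{x}}K^{\{a,b\}}(-\mathbf{y})-\tau_{\mathbf{x}}K^{\{a,b\}}(-\mathbf{y}_0)\big|\,dw(\mathbf{x})\leq C.$$
Granting it, Fubini yields $\int_{\mathbb{R}^N\setminus\Omega^*}|K^{\{a,b\}}b_j|\,dw\leq C\|b_j\|_{L^1(dw)}$, and summing in $j$ with Chebyshev outside $\Omega^*$ finishes the bad part.

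The displayed H\"ormander estimate is the crux and the only genuinely hard point, precisely because $\tau_{\mathbf{x}}$ is not known to be bounded on $L^1(dw)$, so the translated kernel cannot be integrated in $\mathbf{x}$ directly; this is where $s_0$ even with $s_0>\mathbf{N}/2$ and the ideas of~\cite{DzH} are used. I would split $K^{\{a,b\}}$ by a radial dyadic partition of unity into normalized bumps $K_m$ at scales $2^m$, $a/2\lesssim 2^m\lesssim b$, for which \eqref{eq:K_der2_Dunkl} gives $\sum_{|\beta|\leq s_0}2^{m(\mathbf{N}+|\beta|)}\|T^\beta K_m\|_{L^\infty}\leq C$ uniformly in $m$. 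For a single bump one seeks, in the spirit of Lemma~\ref{lem:nonradial_estimation}, a pointwise bound on $\tau_{\mathbf{x}}K_m(-\mathbf{y})$ and on its increment in $\mathbf{y}$ carrying the factor $V(\mathbf{x},\mathbf{y},2^m)^{-1}$ and decaying in $d(\mathbf{x},\mathbf{y})/2^m$. The mechanism, following and refining~\cite{DzH}, expresses $\tau_{\mathbf{x}}K_m(-\mathbf{y})$ through the Dunkl transform and controls $\|\mathcal{F}K_m\|_{L^1(dw)}$ by Cauchy--Schwarz against the weight $(1+2^{2m}\|\xi\|^2)^{-s_0/2}$, whose $L^2(dw)$-norm is finite exactly because $2s_0>\mathbf{N}$; the complementary factor $(1+2^{2m}\|\xi\|^2)^{s_0/2}\mathcal{F}K_m$ is, since $s_0$ is even, the Dunkl transform of $(I-2^{2m}\Delta)^{s_0/2}K_m$ and is estimated in $L^2(dw)$ by the derivative bounds above, while inserting powers of $\|\mathbf{x}-\sigma\mathbf{y}\|$ before Plancherel produces the decay in $d(\mathbf{x},\mathbf{y})$ and Corollary~\ref{coro:Roesler} handles the $\mathbf{y}$-increment. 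Integrating the per-scale estimate over $d(\mathbf{x},\mathbf{y}_0)>Cr$ gives a bound geometric in $r/2^m$ when $2^m\gtrsim r$ and, by the decay, a summable bound when $2^m\lesssim r$; summation over $m$ yields the H\"ormander constant uniformly in $a,b$ and closes the argument.
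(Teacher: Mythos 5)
Your Calder\'on--Zygmund skeleton coincides with the paper's: the reduction of \eqref{eq:Psi_n_weak} to \eqref{eq:Psi_nm_weak} by a.e.\ convergence along a subsequence, the good/bad splitting with the $L^2$ input from Corollary~\ref{uniform_L2}, the $G$-invariant exceptional set $\Omega^*$ built from orbits of dilated balls, and the dyadic decomposition of $K^{\{a,b\}}$ into bumps at scales $2^m$ with a two-sided $\min\big((r/2^m)^\delta,(2^m/r)^\delta\big)$ summation are all exactly what the paper does (with cubes instead of balls). You also correctly isolate the crux: a H\"ormander-type integral condition for $\tau_{\mathbf x}K^{\{a,b\}}(-\mathbf y)$, uniform in $a,b$, which in the paper is Proposition~\ref{propo:j_small} and Corollary~\ref{coro:j_small_and_large}.

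The gap is in your proposed proof of that crux. The mechanism you describe --- Cauchy--Schwarz against the weight $(1+2^{2m}\|\xi\|^2)^{-s_0/2}$, Plancherel for $(I-2^{2m}\Delta)^{s_0/2}K_m$, and $|E(i\xi,\mathbf x)|\le 1$ --- controls $\|\mathcal F K_m\|_{L^1(dw)}$ and hence yields only the \emph{uniform} pointwise bound $|\tau_{\mathbf x}K_m(-\mathbf y)|\lesssim 2^{-m\mathbf N}$ (with decay in $d(\mathbf x,\mathbf y)/2^m$ after inserting powers of $\|\mathbf x-\sigma\mathbf y\|$). That is strictly weaker than the bound ``carrying the factor $V(\mathbf x,\mathbf y,2^m)^{-1}$'' which you correctly state is what one needs: the translated bump lives on $\mathcal O(B(\mathbf y,C2^m))$, whose measure is comparable to $w(B(\mathbf y,2^m))$, and the ratio $w(B(\mathbf y,2^m))/2^{m\mathbf N}\sim \prod_{\alpha\in R}\big((|\langle\mathbf y,\alpha\rangle|+2^m)/2^m\big)^{k(\alpha)}$ is unbounded in $\mathbf y$. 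So integrating your pointwise bound in $dw(\mathbf x)$ gives $2^{-m\mathbf N}w(B(\mathbf y,2^m))$, which is not uniformly bounded, and the per-scale H\"ormander constant does not close. A genuine pointwise $V^{-1}$ bound for the translate of a \emph{non-radial} bump is not accessible by a pure Plancherel argument --- it is essentially the unknown $L^1(dw)$-boundedness of $\tau_{\mathbf x}$ in disguise. The paper circumvents this by never proving a pointwise bound: it factors the bump as $\big((I-\Delta)^{s_0/2}\Phi(t,\cdot)\big)*J^{\{s_1\}}*J^{\{s\}}$ and uses Proposition~\ref{propo:compact_supports}, which gives a direct $L^1(dw)$ estimate for $\tau_{\mathbf y}(f*\phi)$ when $\phi$ is \emph{radial} (via R\"osler's formula~\eqref{eq:translation-radial}), together with the heat-kernel Lipschitz estimate for the Bessel potential (Lemma~\ref{lem:Lipschitz}) to produce the $\mathbf y$-increment gain. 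That radial-convolution device is the ingredient missing from your argument, and without it (or an equivalent) the H\"ormander estimate, and hence the theorem, is not proved.
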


\begin{theorem}\label{theorem:strong_type_truncated1}
Let $1<p<\infty$. Under~\eqref{eq:K_der2} and \eqref{eq:uni_on_annulus}, there is a constant $C=C_p>0$ such that for all $0<a < b<\infty$, and $f \in L^p(dw)\cap L^2(dw)$ we have
\begin{equation}\label{eq:Psi_nm_strong}
    \|K^{\{a,b\}} f\|_{L^p(dw)} \leq C\|f\|_{L^p(dw)},
\end{equation}
\begin{equation}\label{eq:Psi_n_strong}
    \|K^{\{a\}} f\|_{L^p(dw)} \leq C\|f\|_{L^p(dw)}.
\end{equation}
Moreover, the operators $K^{\{a,b\}}$ converge strongly to $K^{\{a\}}$ in $L^p(dw)$ as $b \to \infty$.
\end{theorem}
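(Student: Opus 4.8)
The plan is to obtain the theorem from the two uniform bounds already established for kernels satisfying~\eqref{eq:K_der2} and~\eqref{eq:uni_on_annulus}: the $L^2(dw)$ boundedness of Corollary~\ref{uniform_L2} and the weak type $(1,1)$ estimate of Theorem~\ref{theorem:weak_type_truncated1}, both with constants independent of $0<a<b<\infty$. For $1<p\le 2$ I would interpolate: applying the Marcinkiewicz interpolation theorem between~\eqref{eq:Psi_nm_weak} and the strong $L^2(dw)$ bound for the (linear) operator $f\mapsto K^{\{a,b\}}f$ gives~\eqref{eq:Psi_nm_strong} for such $p$, with a constant depending only on $p$ and the two input constants, hence uniform in $a,b$; the same interpolation between~\eqref{eq:Psi_n_weak} and the $L^2(dw)$ bound for $K^{\{a\}}$ yields~\eqref{eq:Psi_n_strong}.

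For $2\le p<\infty$ I would pass to the adjoint. On the Dunkl--Fourier side the operator $f\mapsto f*K^{\{a,b\}}$ corresponds to multiplication of $\mathcal F f$ by $c_k\mathcal F K^{\{a,b\}}$, so by~\eqref{eq:Plancherel} its adjoint with respect to $\langle u,v\rangle=\int u\,\overline v\,dw$ is convolution with the kernel $K^{*}(\mathbf x)=\overline{K(-\mathbf x)}$, truncated in the same manner. Since $\phi$ is radial, hence even, and $dw$ is invariant under $\mathbf x\mapsto-\mathbf x$, one checks that $(K^{\{a,b\}})^{*}=(K^{*})^{\{a,b\}}$ and $(K^{\{a\}})^{*}=(K^{*})^{\{a\}}$. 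The kernel $K^{*}$ again satisfies~\eqref{eq:K_der2}, and it satisfies~\eqref{eq:uni_on_annulus} because the change of variables $\mathbf x\mapsto-\mathbf x$ gives $\int_{a<\|\mathbf x\|<b}K^{*}\,dw=\overline{\int_{a<\|\mathbf x\|<b}K\,dw}$. Hence the range $1<p'\le 2$ already treated applies verbatim to $K^{*}$, and duality yields
\begin{align*}
\|K^{\{a,b\}}f\|_{L^p(dw)}=\sup_{\|g\|_{L^{p'}(dw)}\le1}\big|\langle f,(K^{*})^{\{a,b\}}g\rangle\big|\le C_p\|f\|_{L^p(dw)},
\end{align*}
uniformly in $a,b$, and likewise for $K^{\{a\}}$.

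For the strong convergence as $b\to\infty$ I would use $K^{\{a,b\}}=K^{\{a\}}-K^{\{b\}}$, so that $K^{\{a,b\}}f-K^{\{a\}}f=-f*K^{\{b\}}$, reducing the claim to $\|f*K^{\{b\}}\|_{L^p(dw)}\to0$. For $f$ bounded with compact support --- a class dense in $L^p(dw)$ --- the function $f$ lies in every $L^r(dw)$, so the bounds just proved give $\|f*K^{\{b\}}\|_{L^q(dw)}\le C_q\|f\|_{L^q(dw)}$ for a fixed exponent $q$ chosen with $q>p$ when $p>2$ and $1<q<p$ when $p<2$; meanwhile~\eqref{eq:L2strong} gives $\|f*K^{\{b\}}\|_{L^2(dw)}\to0$. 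As $p$ lies strictly between $2$ and $q$, the log-convexity inequality $\|u\|_{L^p(dw)}\le\|u\|_{L^2(dw)}^{\theta}\|u\|_{L^q(dw)}^{1-\theta}$ with some $\theta\in(0,1)$ forces $\|f*K^{\{b\}}\|_{L^p(dw)}\to0$. An $\varepsilon/3$ argument, using the uniform $L^p(dw)$ boundedness, then extends the convergence to all $f\in L^p(dw)\cap L^2(dw)$.

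Given Corollary~\ref{uniform_L2} and Theorem~\ref{theorem:weak_type_truncated1}, all of this is essentially soft analysis; the one point demanding care is the duality step, namely verifying that the $L^2(dw)$-adjoint of a truncated Dunkl convolution is again a truncated Dunkl convolution whose kernel $K^{*}=\overline{K(-\cdot)}$ inherits exactly~\eqref{eq:uni_on_annulus} and~\eqref{eq:K_der2}, so that Theorem~\ref{theorem:weak_type_truncated1} may legitimately be reused for $K^{*}$. Uniformity in $a,b$ is automatic, since every bound entering the argument is already uniform.
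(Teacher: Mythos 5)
Your proposal is correct and follows essentially the same route as the paper: Marcinkiewicz interpolation between the uniform weak $(1,1)$ bound and the uniform $L^2(dw)$ bound for $1<p\le 2$, duality via the adjoint kernel $K^{*}(\mathbf x)=\overline{K(-\mathbf x)}$ (which inherits \eqref{eq:uni_on_annulus} and \eqref{eq:K_der2}) for $p\ge 2$, and, for the strong convergence, log-convexity of $L^p$ norms between the $L^2$ convergence of \eqref{eq:L2strong} and a uniform $L^q$ bound, followed by a density argument. The only cosmetic difference is that the paper phrases the convergence step as a Cauchy condition in $L^p$ using the pair $(L^2,L^{2p})$ on Schwartz functions, whereas you reduce to $\|K^{\{b\}}f\|_{L^p}\to 0$ with a generic intermediate exponent; both are the same idea.
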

As the consequence of Theorems~\ref{theorem:weak_type_truncated1} and~\ref{theorem:strong_type_truncated1} we obtain the following theorem for the operator $K$ defined in Theorem~\ref{theorem:limiting_operator_L2}.

\begin{theorem}\label{theorem:weak_type_K}
Let $1 \leq p <\infty$. Under~\eqref{eq:K_der2},  \eqref{eq:uni_on_annulus}, and \eqref{eq:limitA}  there is a constant $C=C_p>0$ such that for all $\lambda>0$, and $f \in L^p(dw)\cap L^2(dw)$ we have
\begin{equation}\label{eq:K_weak}
    w(\{\mathbf{x} \in \mathbb{R}^N\,:\,|Kf(\mathbf{x})|>\lambda\}) \leq C\lambda^{-1}\|f\|_{L^1(dw)} \ \ \text{ if }p=1,
\end{equation}
\begin{equation}\label{eq:K_strong}
    \|K f\|_{L^p(dw)} \leq C\|f\|_{L^p(dw)} \ \ \text{ if } 1<p<\infty.
\end{equation}
Moreover, if $1<p<\infty$, the operators $K^{\{a\}}$ converge strongly to $K$ in $L^p(dw)$ as $a \to 0$.
\end{theorem}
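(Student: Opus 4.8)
The plan is to derive all three assertions by transferring the uniform bounds for the truncated operators $K^{\{a\}}$, established in Theorems~\ref{theorem:weak_type_truncated1} and~\ref{theorem:strong_type_truncated1}, to the limiting operator $K$, the bridge being the $L^2$-convergence $K^{\{a\}}*f\to Kf$ supplied by Theorem~\ref{theorem:main_L2}. For the strong bound \eqref{eq:K_strong} I would fix $1<p<\infty$ and $f\in L^p(dw)\cap L^2(dw)$. By Theorem~\ref{theorem:main_L2} we have $K^{\{a\}}*f\to Kf$ in $L^2(dw)$ as $a\to 0$, so along some sequence $a_n\to 0$ the convergence $K^{\{a_n\}}*f\to Kf$ holds almost everywhere. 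Since $\|K^{\{a_n\}}f\|_{L^p(dw)}\le C_p\|f\|_{L^p(dw)}$ uniformly in $n$ by Theorem~\ref{theorem:strong_type_truncated1}, Fatou's lemma gives $\|Kf\|_{L^p(dw)}\le\liminf_n\|K^{\{a_n\}}f\|_{L^p(dw)}\le C_p\|f\|_{L^p(dw)}$. As $L^p(dw)\cap L^2(dw)$ is dense in $L^p(dw)$, this proves \eqref{eq:K_strong} and lets $K$ extend uniquely to a bounded operator on $L^p(dw)$.

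For the weak-type bound \eqref{eq:K_weak} I would take $f\in L^1(dw)\cap L^2(dw)$ together with the same almost-everywhere convergent sequence $K^{\{a_n\}}f\to Kf$. Fix $\lambda>0$ and $0<\lambda'<\lambda$. If $|Kf(\mathbf x)|>\lambda$, then $|K^{\{a_n\}}f(\mathbf x)|>\lambda'$ for all large $n$, whence $\{\mathbf x:|Kf(\mathbf x)|>\lambda\}\subseteq\liminf_n\{\mathbf x:|K^{\{a_n\}}f(\mathbf x)|>\lambda'\}$. The lower semicontinuity of $w$ under $\liminf$ of sets, combined with the uniform weak-type estimate of Theorem~\ref{theorem:weak_type_truncated1}, yields $w(\{|Kf|>\lambda\})\le\liminf_n w(\{|K^{\{a_n\}}f|>\lambda'\})\le C(\lambda')^{-1}\|f\|_{L^1(dw)}$, and letting $\lambda'\uparrow\lambda$ gives \eqref{eq:K_weak}.

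It remains to prove the strong convergence $K^{\{a\}}\to K$ in $L^p(dw)$ for $1<p<\infty$. Because $\|K^{\{a\}}\|_{L^p\to L^p}$ and $\|K\|_{L^p\to L^p}$ are both dominated by $C_p$ uniformly in $a$, it suffices to show $K^{\{a\}}f\to Kf$ in $L^p(dw)$ for $f$ in the dense class of bounded, compactly supported functions, which lie in every $L^q(dw)$. The case $p=2$ is exactly Theorem~\ref{theorem:main_L2}. For $p\neq 2$ the delicate issue is that pure interpolation between $L^2$-convergence and the uniform $L^p$-bound only produces convergence for exponents strictly between $2$ and $p$, never at the endpoint $p$ itself; I would therefore run a three-exponent squeeze. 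Choose an auxiliary exponent $q$ lying on the opposite side of $p$ from $2$ (so $1<q<p$ when $p<2$, and $p<q<\infty$ when $p>2$), and set $g_a:=K^{\{a\}}f-Kf$. In the representative case $p<2$, log-convexity of the $L^r(dw)$-norms gives, with $\tfrac1p=\tfrac{\theta}{q}+\tfrac{1-\theta}{2}$ and $\theta\in(0,1)$,
\[
\|g_a\|_{L^p(dw)}\le\|g_a\|_{L^q(dw)}^{\theta}\,\|g_a\|_{L^2(dw)}^{1-\theta},
\]
the case $p>2$ being symmetric with the roles of $q$ and $2$ interchanged. Here $\|g_a\|_{L^q(dw)}\le 2C_q\|f\|_{L^q(dw)}$ is bounded uniformly in $a$ by \eqref{eq:K_strong} and Theorem~\ref{theorem:strong_type_truncated1}, while $\|g_a\|_{L^2(dw)}\to 0$ by Theorem~\ref{theorem:main_L2}; hence $\|g_a\|_{L^p(dw)}\to 0$. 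Finally, a standard approximation argument—using the uniform $L^p$-bounds to absorb the error incurred when replacing an arbitrary $f\in L^p(dw)$ by a nearby bounded compactly supported function—propagates the convergence to all of $L^p(dw)$. The only genuinely nontrivial point is this endpoint passage from $L^2$- to $L^p$-convergence; the strong- and weak-type estimates themselves reduce to routine Fatou arguments once Theorem~\ref{theorem:main_L2} is in hand.
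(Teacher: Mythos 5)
Your argument is correct and follows essentially the same route as the paper, whose proof of this theorem simply points back to the subsequence-plus-Fatou argument of \eqref{eq:cup_cap_cup} for the weak-type bound and to the interpolation-of-norms argument in the proof of Theorem~\ref{theorem:strong_type_truncated1} for the strong bound and the $L^p$-convergence. The only cosmetic difference is that for $1<p<2$ the paper invokes the Marcinkiewicz interpolation theorem on the difference operators, whereas you use log-convexity of the $L^r(dw)$-norms with an auxiliary exponent $q\in(1,p)$ on a dense class; both work.
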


}
}
{
\subsection{Bessel potential}
For $s>0$ we set
\begin{align*}
    J^{\{s\}}(\mathbf{x})=\mathcal{F}^{-1}((1+\|\cdot\|^2)^{-s\slash 2})(\mathbf{x}).
\end{align*}
By gamma function identity we have
\begin{align*}
    (1+\|\xi\|^2)^{-s\slash 2}=\int_0^{\infty}e^{-t}e^{-t\|\xi\|^2}t^{s\slash 2}\,\frac{dt}{t},
\end{align*}
which leads us to
\begin{equation}\label{eq:subordination}
    J^{\{s\}}(\mathbf{x})=\int_0^{\infty}e^{-t}h_t(\mathbf{x})t^{s\slash 2}\,\frac{dt}{t}
\end{equation}
(see~\eqref{eq:heat_def}). The function $J^{\{s\}}$ is radial, positive and belongs $L^1(dw)$.
As a consequence of~\eqref{eq:subordination} (see i.e.~\cite{Grafakos}), we get the following proposition.
\begin{proposition}\label{propo:bessel_estimate}
Let $M>0$. There is a constant $C=C_{s,M}>0$ such that
\begin{align*}
    0<J^{\{s\}}(\mathbf{x}) \leq C \begin{cases}
    \|\mathbf{x}\|^{s-\mathbf{N}} & \text{ if }\|\mathbf{x}\| \leq 1\slash 2, \quad 0<s<\mathbf N,\\
    -\ln \|\mathbf x\| & \text{ if }\|\mathbf{x}\| \leq 1\slash 2, \quad s=\mathbf N,\\
    1 & \text{ if } \| \mathbf x\|\leq 1/2, \quad s>\mathbf N,\\
    (1+\|\mathbf{x}\|^2)^{-M} & \text{ if }\|\mathbf{x}\|>1/2,  \quad 0<s.
    \end{cases}
\end{align*}
\end{proposition}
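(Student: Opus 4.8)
The plan is to substitute the explicit Gaussian form of the heat kernel from~\eqref{eq:heat_def}, namely $h_t(\mathbf{x})=c_k^{-1}(2t)^{-\mathbf{N}/2}e^{-\|\mathbf{x}\|^2/(4t)}$, into the subordination formula~\eqref{eq:subordination}, which turns $J^{\{s\}}$ into the single scalar integral
$$J^{\{s\}}(\mathbf{x})=c_k^{-1}2^{-\mathbf{N}/2}\int_0^{\infty}e^{-t}\,e^{-\|\mathbf{x}\|^2/(4t)}\,t^{(s-\mathbf{N})/2}\,\frac{dt}{t}.$$
Positivity of $J^{\{s\}}$ is then immediate, since the integrand is strictly positive, and the harmless prefactor can be absorbed into $C$. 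Everything else reduces to reading off the four size regimes from this integral, the exponentials at the two endpoints $t\to 0$ and $t\to\infty$ controlling the convergence and the power $t^{(s-\mathbf{N})/2}$ controlling the homogeneity.

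For $\|\mathbf{x}\|>1/2$ I would extract exponential decay directly. By the arithmetic--geometric mean inequality $\|\mathbf{x}\|^2/(8t)+t/2\ge\|\mathbf{x}\|/2$, so that $e^{-\|\mathbf{x}\|^2/(4t)}e^{-t}\le e^{-\|\mathbf{x}\|/2}\,e^{-\|\mathbf{x}\|^2/(8t)}e^{-t/2}$. Pulling the factor $e^{-\|\mathbf{x}\|/2}$ out of the integral leaves $\int_0^{\infty}e^{-\|\mathbf{x}\|^2/(8t)}e^{-t/2}t^{(s-\mathbf{N})/2}\,\frac{dt}{t}$, which is bounded uniformly in $\mathbf{x}$: splitting at $t=1$, the tail is dominated by $\int_1^\infty e^{-t/2}t^{(s-\mathbf{N})/2-1}\,dt$, while on $[0,1]$ the surviving factor $e^{-\|\mathbf{x}\|^2/(8t)}\le e^{-1/(32t)}$ kills any power singularity at the origin. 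Hence $J^{\{s\}}(\mathbf{x})\le Ce^{-\|\mathbf{x}\|/2}$, and since exponential decay beats any polynomial one has $e^{-\|\mathbf{x}\|/2}\le C_M(1+\|\mathbf{x}\|^2)^{-M}$, yielding the last case for every $M>0$.

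For $\|\mathbf{x}\|\le 1/2$ the natural device is the scaling substitution $t=\|\mathbf{x}\|^2u$, which produces
$$J^{\{s\}}(\mathbf{x})=c_k^{-1}2^{-\mathbf{N}/2}\,\|\mathbf{x}\|^{s-\mathbf{N}}\int_0^{\infty}e^{-\|\mathbf{x}\|^2u}\,e^{-1/(4u)}\,u^{(s-\mathbf{N})/2}\,\frac{du}{u}.$$
Here the factor $e^{-1/(4u)}$ makes the integrand negligible near $u=0$, so the behaviour is governed by the tail at $u=\infty$, where the integrand is comparable to $e^{-\|\mathbf{x}\|^2u}u^{(s-\mathbf{N})/2-1}$. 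When $0<s<\mathbf{N}$ this tail already converges with $e^{-\|\mathbf{x}\|^2u}$ replaced by $1$, so the integral is bounded uniformly in $\mathbf{x}$ and the prefactor gives exactly $C\|\mathbf{x}\|^{s-\mathbf{N}}$. When $s>\mathbf{N}$ I would instead substitute $v=\|\mathbf{x}\|^2u$ in the tail to see that it grows like $\|\mathbf{x}\|^{\mathbf{N}-s}\,\Gamma((s-\mathbf{N})/2)$ as $\|\mathbf{x}\|\to 0$, which cancels the prefactor $\|\mathbf{x}\|^{s-\mathbf{N}}$ and leaves the bound $C$.

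The borderline case $s=\mathbf{N}$ is the main obstacle. Now the prefactor is $1$ and the tail integrand is $\sim e^{-\|\mathbf{x}\|^2u}u^{-1}$, which is only logarithmically non-integrable; the logarithm appears because $e^{-\|\mathbf{x}\|^2u}$ acts as a cutoff at $u\sim\|\mathbf{x}\|^{-2}$, so the contribution of the range $1\lesssim u\lesssim\|\mathbf{x}\|^{-2}$ is $\sim\int_1^{\|\mathbf{x}\|^{-2}}u^{-1}\,du\sim-\ln\|\mathbf{x}\|$, the remaining ranges contributing $O(1)$. Equivalently, $\int_{\|\mathbf{x}\|^2}^{\infty}e^{-v}v^{-1}\,dv$ has precisely this logarithmic singularity as $\|\mathbf{x}\|\to 0$. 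The care needed is just in making these cutoff estimates rigorous and uniform in $\mathbf{x}$ down to the origin; this is entirely standard for the Euclidean Bessel kernel and transfers verbatim with $N$ replaced by the homogeneous dimension $\mathbf{N}$, precisely because $h_t(\mathbf{x})$ retains the Gaussian profile.
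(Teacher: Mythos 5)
Your proof is correct and takes essentially the same route as the paper: the paper obtains the proposition directly from the subordination formula \eqref{eq:subordination} combined with the explicit Gaussian form \eqref{eq:heat_def} of $h_t$, citing \cite{Grafakos} for precisely the standard scalar-integral estimates (rescaling $t=\|\mathbf{x}\|^2 u$, the three regimes in $s$ versus $\mathbf{N}$, and exponential decay at infinity) that you carry out in detail. Your case analysis is accurate, including the logarithmic borderline $s=\mathbf{N}$.
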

Let $J^{\{s\}}(\mathbf x,\mathbf y)=\tau_{\mathbf x}(J^{\{s\}})(-\mathbf y)$. Clearly, by~\eqref{eq:translation-radial}, 
$0<J^{\{s\}}(\mathbf x,\mathbf y)$ for all $\mathbf{x},\mathbf{y} \in \mathbb{R}^N$ and
\begin{align*}
    \int_{\mathbb{R}^N} J^{\{s\}}(\mathbf x,\mathbf y)\, dw(\mathbf x)=\int_{\mathbb{R}^N} J^{\{s\}}(\mathbf x,\mathbf y)\, dw(\mathbf y)=\int_{\mathbb{R}^N} J^{\{s\}}(\mathbf x)\, dw(\mathbf x).
\end{align*}
\begin{lemma}\label{lem:Lipschitz}
Let $0<\delta < s$ and $0<\delta \leq 1$. Then there is a constant $C>0$ such that 
\begin{align*}
    \int_{\mathbb{R}^N} |J^{\{s\}} (\mathbf x,\mathbf y)-J^{\{s\}} (\mathbf x,\mathbf y')|\, dw(\mathbf x)\leq C\min (1, \| \mathbf y-\mathbf y'\|^\delta)  \ \ \text{for all }\mathbf{y},\mathbf{y}' \in \mathbb{R}^N.
\end{align*}
\end{lemma}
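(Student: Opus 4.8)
The plan is to combine the subordination formula \eqref{eq:subordination} with the heat kernel bounds of Theorem~\ref{theorem:heat}, reducing the claim to a standard doubling-measure integral estimate. Write $r=\|\mathbf y-\mathbf y'\|$. First, the bound by the constant is immediate: since $J^{\{s\}}(\mathbf x,\mathbf y)>0$ and $\int_{\mathbb R^N}J^{\{s\}}(\mathbf x,\mathbf y)\,dw(\mathbf x)=\|J^{\{s\}}\|_{L^1(dw)}$ (the identity recorded just before the lemma),
\[
\int_{\mathbb R^N}|J^{\{s\}}(\mathbf x,\mathbf y)-J^{\{s\}}(\mathbf x,\mathbf y')|\,dw(\mathbf x)\le 2\|J^{\{s\}}\|_{L^1(dw)}.
\]
Hence it suffices to prove the bound $C r^{\delta}$ when $r\le 1$.

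Next I would apply $\tau_{\mathbf x}$ to \eqref{eq:subordination}. As all integrands are nonnegative and $\tau_{\mathbf x}$ acts on the radial function $J^{\{s\}}$ through integration against the probability measure $\mu_{\mathbf x}$ (formula \eqref{eq:translation-radial}), Tonelli's theorem lets me interchange $\tau_{\mathbf x}$ with the $t$-integral, giving
\[
J^{\{s\}}(\mathbf x,\mathbf y)=\int_0^\infty e^{-t}\,h_t(\mathbf x,\mathbf y)\,t^{s/2}\,\frac{dt}{t}.
\]
By the triangle inequality and Fubini, the left-hand side of the lemma is at most $\int_0^\infty e^{-t}t^{s/2}\,\mathcal I(t)\,\frac{dt}{t}$, where $\mathcal I(t)=\int_{\mathbb R^N}|h_t(\mathbf x,\mathbf y)-h_t(\mathbf x,\mathbf y')|\,dw(\mathbf x)$. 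Everything thus reduces to estimating $\mathcal I(t)$, which I split according to whether $t\ge r^2$ or $t<r^2$.

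For $t\ge r^2$ (so $r\le\sqrt t$) the Lipschitz bound \eqref{eq:Gauss-Lipschitz} applies and factors out $r/\sqrt t$, leaving
\[
\mathcal I(t)\le C\,\frac{r}{\sqrt t}\int_{\mathbb R^N}\Big(1+\tfrac{\|\mathbf x-\mathbf y\|}{t}\Big)^{-2}V(\mathbf x,\mathbf y,\sqrt t)^{-1}e^{-c\,d(\mathbf x,\mathbf y)^2/t}\,dw(\mathbf x).
\]
The hard part is the uniform (in $t$ and $\mathbf y$) boundedness of this last integral; this is where the doubling of $dw$ and the orbit structure enter. I would bound $e^{-c\,d(\mathbf x,\mathbf y)^2/t}\le\sum_{\sigma\in G}e^{-c\|\sigma\mathbf x-\mathbf y\|^2/t}$, use the $G$-invariance of $dw$ and of $V(\cdot,\mathbf y,\sqrt t)$ to reduce to the single term $e^{-c\|\mathbf x-\mathbf y\|^2/t}$, estimate $V(\mathbf x,\mathbf y,\sqrt t)^{-1}\le w(B(\mathbf y,\sqrt t))^{-1}$, and then decompose into the annuli $\|\mathbf x-\mathbf y\|\sim 2^j\sqrt t$; the doubling property \eqref{eq:doubling} gives $w(B(\mathbf y,2^j\sqrt t))\le C2^{j\mathbf N}w(B(\mathbf y,\sqrt t))$, and the Gaussian factor defeats this geometric growth, yielding a constant. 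Thus $\mathcal I(t)\le C r t^{-1/2}$ for $t\ge r^2$. For $t<r^2$ the Lipschitz estimate is unavailable, so I would just use $\mathcal I(t)\le\int_{\mathbb R^N}h_t(\mathbf x,\mathbf y)\,dw(\mathbf x)+\int_{\mathbb R^N}h_t(\mathbf x,\mathbf y')\,dw(\mathbf x)=2$ by the symmetry and conservation of $h_t$.

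Finally, inserting these bounds and splitting the $t$-integral at $t=r^2$ gives, for $r\le 1$,
\[
\int_0^\infty e^{-t}t^{s/2}\,\mathcal I(t)\,\frac{dt}{t}\le C\,r\int_{r^2}^\infty e^{-t}t^{(s-3)/2}\,dt+C\int_0^{r^2}t^{s/2-1}\,dt.
\]
The second term equals $C'r^{s}$, which is $\le C'r^{\delta}$ since $\delta<s$ and $r\le 1$. In the first term, the near-origin behaviour of $t^{(s-3)/2}$ produces a factor of order $r^{s-1}$ when $s<1$ (a logarithm when $s=1$, and $O(1)$ when $s>1$), so the whole term is $\le C r^{\min(s,1)}$ up to a logarithm, hence $\le C r^{\delta}$ because $\delta<s$ and $\delta\le 1$. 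Combining with the trivial constant bound when $r\ge 1$ yields the desired estimate $C\min(1,\|\mathbf y-\mathbf y'\|^{\delta})$. The only genuinely nontrivial ingredient is the uniform boundedness of the heat-kernel integral in the third paragraph.
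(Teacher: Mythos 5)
Your proof is correct and follows essentially the same route as the paper: difference the subordination formula, apply the heat-kernel Lipschitz bound \eqref{eq:Gauss-Lipschitz} to get an $L^1$ estimate for $h_t(\cdot,\mathbf y)-h_t(\cdot,\mathbf y')$, and split the $t$-integral at $t=\|\mathbf y-\mathbf y'\|^2$. The only cosmetic difference is that the paper first interpolates the two bounds into the single estimate $\min\bigl(1,(\|\mathbf y-\mathbf y'\|/\sqrt t\,)^{\delta}\bigr)$ (their \eqref{eq:Holder_heat}) before integrating in $t$, which avoids your case analysis on $s\lessgtr 1$ and the borderline logarithm; you instead carry the raw bounds and check the cases at the end, and you make explicit the uniform integrability of the Gaussian majorant that the paper leaves implicit.
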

\begin{proof}
By~\eqref{eq:subordination} we have
\begin{align}\label{eq:diff_bessel}
    J^{\{s\}}(\mathbf{x},\mathbf{y})-J^{\{s\}}(\mathbf{x},\mathbf{y}')=\int_0^{\infty}e^{-t}\big(h_t(\mathbf{x},\mathbf{y})-h_t(\mathbf{x},\mathbf{y}')\big)t^{s/2}\frac{dt}{t}.
\end{align}
Moreover, by~\eqref{eq:Gauss-Lipschitz} we get
\begin{equation}\label{eq:Holder_heat}
    \int_{\mathbb{R}^N} |h_t(\mathbf x,\mathbf y)-h_t(\mathbf x,\mathbf y')|\,dw(\mathbf x)\leq C\min \Big(1, \big(\| \mathbf y-\mathbf y'\|\slash \sqrt{t}\big)^\delta\Big).
\end{equation}
The lemma is a direct consequence of \eqref{eq:diff_bessel} and \eqref{eq:Holder_heat}.
\end{proof}

\subsection{Auxiliary estimates on \texorpdfstring{$K^{\{a,b\}}$}{Kj}}

The following theorem and proposition were proved in~\cite[Theorem 1.7]{DzH} and~\cite[Proposition 4.4]{DzH} respectively.
\begin{theorem}\label{teo:support}
Let $f \in L^2(dw)$, $\text{\rm supp}\, f \subseteq B(0,r)$, and $\mathbf{x} \in \mathbb{R}^N$. Then  
\begin{equation}\label{eq:inclusion_support}
\supp \tau_{\mathbf{x}}f(-\, \cdot) \subseteq \mathcal{O}(B(\mathbf x,r)),
\end{equation}
where $\mathcal O(B(\mathbf x, r))=\bigcup_{\sigma \in G} B(\sigma (\mathbf x), r))$ is the orbit of the Euclidean closed ball $B(\mathbf x,r)=\{\mathbf y \in \mathbb{R}^N: \| \mathbf x-\mathbf y\|\leq r\}$. 
\end{theorem}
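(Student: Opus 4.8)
The plan is to reformulate the inclusion metrically and then treat separately a radial case, which is explicit, and the general case, which I would reach by heat regularization. Since $\mathcal{O}(B(\mathbf{x},r))=\{\mathbf{y}\in\mathbb{R}^N:d(\mathbf{x},\mathbf{y})\le r\}$ and $B(0,r)=\{\mathbf{w}\in\mathbb{R}^N:d(0,\mathbf{w})\le r\}$, the assertion \eqref{eq:inclusion_support} is equivalent to the implication
$$d(\mathbf{x},\mathbf{y})>r \ \Longrightarrow\ \tau_{\mathbf{x}}f(-\mathbf{y})=0 \quad\text{(for a.e.\ such }\mathbf{y}).$$
Since $\tau_{\mathbf{x}}f(-\cdot)\in L^2(dw)$, it suffices to show that this function vanishes a.e.\ on $\{d(\mathbf{x},\cdot)>r\}$. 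I would also record at the outset the triangle inequality for the orbit distance $d$, which is used throughout.

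For radial $f(\mathbf{z})=\tilde f(\|\mathbf{z}\|)$ the result follows at once from R\"osler's formula \eqref{eq:translation-radial}. The integrand $\tilde f\circ A$ vanishes whenever $A(\mathbf{x},\mathbf{y},\eta)>r$, so it is enough to prove the geometric inequality $A(\mathbf{x},\mathbf{y},\eta)\ge d(\mathbf{x},\mathbf{y})$ for every $\eta\in\operatorname{conv}\mathcal{O}(\mathbf{x})$. Writing $\eta=\sum_i\lambda_i\sigma_i(\mathbf{x})$ as a convex combination of orbit points and using $\|\sigma_i(\mathbf{x})\|=\|\mathbf{x}\|$, one gets $A(\mathbf{x},\mathbf{y},\eta)^2=\sum_i\lambda_i\|\sigma_i(\mathbf{x})-\mathbf{y}\|^2\ge d(\mathbf{x},\mathbf{y})^2$, because each $\|\sigma_i(\mathbf{x})-\mathbf{y}\|\ge d(\mathbf{x},\mathbf{y})$. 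Consequently, when $d(\mathbf{x},\mathbf{y})>r$ the function $\tilde f\circ A$ vanishes on $\operatorname{conv}\mathcal{O}(\mathbf{x})\supseteq\supp\mu_{\mathbf{x}}$, whence $\tau_{\mathbf{x}}f(-\mathbf{y})=0$.

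For general $f\in L^2(dw)$ with $\supp f\subseteq B(0,r)$ I would pass through the heat semigroup. As $e^{t\Delta}f\to f$ in $L^2(dw)$ and $\tau_{\mathbf{x}}$ is an $L^2$-contraction, we have $\tau_{\mathbf{x}}(e^{t\Delta}f)(-\cdot)\to\tau_{\mathbf{x}}f(-\cdot)$ in $L^2(dw)$; hence for each fixed $\delta>0$ it suffices to show that the $L^2$-mass of $\tau_{\mathbf{x}}(e^{t\Delta}f)(-\cdot)$ over $\Omega_\delta:=\{\mathbf{y}:d(\mathbf{x},\mathbf{y})>r+\delta\}$ tends to $0$ as $t\to0^{+}$, and then let $\delta\to0$. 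Writing $e^{t\Delta}f=\int f(\mathbf{w})\,h_t(\cdot,\mathbf{w})\,dw(\mathbf{w})$ and interchanging the translation with the integral gives the representation $\tau_{\mathbf{x}}(e^{t\Delta}f)(-\mathbf{y})=\int_{\|\mathbf{w}\|\le r}f(\mathbf{w})\,\mathcal{H}_t(\mathbf{x},\mathbf{y},\mathbf{w})\,dw(\mathbf{w})$, where $\mathcal{H}_t(\mathbf{x},\mathbf{y},\mathbf{w})=\tau_{\mathbf{x}}\big[h_t(\cdot,\mathbf{w})\big](-\mathbf{y})$ is the Dunkl translation, in the first slot, of the shifted heat kernel.

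The heart of the matter — and what I expect to be the main obstacle — is a quantitative Gaussian bound for this three-point kernel $\mathcal{H}_t$, namely that $\mathcal{H}_t(\mathbf{x},\mathbf{y},\mathbf{w})$ decays like $e^{-c\delta^2/t}$ (up to volume factors that remain integrable in $\mathbf{y}$) uniformly for $\|\mathbf{w}\|\le r$ and $\mathbf{y}\in\Omega_\delta$; coupled with the Cauchy--Schwarz inequality this would force the $L^2(\Omega_\delta)$-norm above to $0$, the gain coming from $d(\mathbf{x},\mathbf{y})-\|\mathbf{w}\|\ge\delta$ via the triangle inequality for $d$. The difficulty is exactly the one emphasized in the Introduction: the function $h_t(\cdot,\mathbf{w})$ is genuinely non-radial in its running variable, $\tau_{\mathbf{x}}$ is not positivity preserving, and convolution by $h_t(\cdot,\mathbf{w})$ does not belong to the functional calculus of $\Delta$, so the Davies--Gaffney/finite-propagation estimates that settle the radial case via Theorem \ref{theorem:heat} do not transfer directly. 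Establishing the required control of $\mathcal{H}_t$ — relating the translated shifted heat kernel to translations of radial profiles centered at the orbit of $\mathbf{w}$ and invoking the radial estimates of Theorem \ref{theorem:heat} and Lemma \ref{lem:nonradial_estimation}, while circumventing the failure of positivity — is the genuinely new input and the technical crux of the whole argument.
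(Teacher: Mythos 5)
First, a point of comparison: the paper does not prove this statement at all --- it is imported verbatim from \cite[Theorem 1.7]{DzH}, so there is no internal proof to measure your argument against. Judged on its own, your argument is correct and complete only in the radial case. The metric reformulation ($\mathcal{O}(B(\mathbf{x},r))=\{\mathbf{y}:d(\mathbf{x},\mathbf{y})\le r\}$, so one must show $\tau_{\mathbf{x}}f(-\mathbf{y})=0$ a.e.\ where $d(\mathbf{x},\mathbf{y})>r$) is right, and so is the key inequality: for $\eta=\sum_i\lambda_i\sigma_i(\mathbf{x})\in\operatorname{conv}\mathcal{O}(\mathbf{x})$ one has $A(\mathbf{x},\mathbf{y},\eta)^2=\sum_i\lambda_i\|\sigma_i(\mathbf{x})-\mathbf{y}\|^2\ge d(\mathbf{x},\mathbf{y})^2$, using $\|\sigma_i(\mathbf{x})\|=\|\mathbf{x}\|$ and $\sum_i\lambda_i=1$; combined with \eqref{eq:translation-radial} and $\supp\mu_{\mathbf{x}}\subseteq\operatorname{conv}\mathcal{O}(\mathbf{x})$ this does give \eqref{eq:inclusion_support} for radial $f$ (modulo a routine density step to pass from ``reasonable'' radial functions to radial $L^2$ functions).

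The general case, however, is where the entire content of the theorem lies, and there your argument stops exactly where a proof would have to begin. The regularization frame ($e^{t\Delta}f\to f$ in $L^2$, contractivity of $\tau_{\mathbf{x}}$, exhaustion of $\{d(\mathbf{x},\cdot)>r\}$ by the sets $\Omega_\delta$) is sound, but everything hinges on the Gaussian bound for $\mathcal{H}_t(\mathbf{x},\mathbf{y},\mathbf{w})=\tau_{\mathbf{x}}\bigl[h_t(\cdot,\mathbf{w})\bigr](-\mathbf{y})$ that you explicitly defer. This is not a routine missing lemma. Since $h_t(\cdot,\mathbf{w})=\tau_{-\mathbf{w}}h_t$ is non-radial for $\mathbf{w}\ne 0$, the kernel $\mathcal{H}_t$ is the Dunkl translation of a non-radial function --- precisely the class of objects that Theorem \ref{teo:support} is needed to control in the first place. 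None of the tools available here apply: Theorem \ref{theorem:heat} and Lemma \ref{lem:nonradial_estimation} bound translations of \emph{radial} profiles only, and the uniform decay $e^{-c\delta^2/t}$ you posit for $\mathcal{H}_t$ is, as far as the cited literature goes, an unproved strengthening of the theorem rather than an available input (your own closing paragraph concedes as much). The reduction is therefore essentially circular, and the non-radial case is a genuine gap; whatever mechanism \cite[Theorem 1.7]{DzH} uses, it cannot be this one, since this one presupposes an estimate that is not known.
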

\begin{proposition}\label{propo:compact_supports}
There is a constant $C>0$ such that for any  $r_1,r_2>0$, any $f\in L^1(dw)$ such that ${\rm supp\,} f \subseteq B(0,r_2)$, any continuous radial function $\phi$ such that $\text{\rm supp}\, \phi \subseteq B(0,r_1)$, and for all $\mathbf{y} \in \mathbb{R}^N$ we have
\begin{align*}
\|\tau_{\mathbf{y}}(f * \phi)\|_{L^1(dw)} \leq C (r_1(r_1+r_2))^{\frac{\mathbf{N}}{2}}
\| \phi\|_{L^{\infty}}\|f\|_{L^1(dw)}.
\end{align*}
\end{proposition}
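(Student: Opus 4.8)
The plan is to combine the two support statements coming from Theorem~\ref{teo:support} with an $L^{2}$-duality argument, reducing the whole inequality to a single local $L^{2}$-estimate for Dunkl translations that is uniform in $\mathbf{y}$. Write $R=r_{1}+r_{2}$ and $g=f*\phi$. First I would pin down the supports. Since $\phi$ is radial and supported in $B(0,r_{1})$, Theorem~\ref{teo:support} gives that $\tau_{\mathbf{x}}\phi(-\mathbf{u})\neq0$ forces $\mathbf{u}\in\mathcal{O}(B(\mathbf{x},r_{1}))$, i.e. $\|\mathbf{x}\|=\|\sigma\mathbf{x}\|\le\|\mathbf{u}\|+r_{1}$ for some $\sigma\in G$; hence $g(\mathbf{x})=\int f(\mathbf{u})\,\tau_{\mathbf{x}}\phi(-\mathbf{u})\,dw(\mathbf{u})$ vanishes unless $\|\mathbf{x}\|\le r_{2}+r_{1}=R$. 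Applying Theorem~\ref{teo:support} a second time, now to $g$, shows that (after the harmless reflection $\mathbf{x}\mapsto-\mathbf{x}$) $\supp\tau_{\mathbf{y}}g$ lies in $\mathcal{O}(B(\mathbf{y},R))$, a union of $|G|$ balls of radius $R$, each of $w$-measure $w(B(\mathbf{y},R))$.

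Next comes the duality. Using $\mathcal{F}(\tau_{\mathbf{y}}g)(\xi)=E(i\xi,\mathbf{y})\mathcal{F}g(\xi)$ and Parseval one has the self-transpose identity $\int\tau_{\mathbf{y}}g\,\Psi\,dw=\int g\,\tau_{-\mathbf{y}}\Psi\,dw$, so that $\|\tau_{\mathbf{y}}g\|_{L^{1}}=\sup\int g\,\tau_{-\mathbf{y}}\Psi\,dw$, the supremum over $\|\Psi\|_{L^{\infty}}\le1$ supported in $\mathcal{O}(B(\mathbf{y},R))$. Because $g$ is supported in $B(0,R)$, Cauchy--Schwarz gives $\int g\,\tau_{-\mathbf{y}}\Psi\,dw\le\|g\|_{L^{2}}\,\|\tau_{-\mathbf{y}}\Psi\|_{L^{2}(B(0,R))}$. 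For the first factor I would use the interpolation inequality $\|g\|_{L^{2}}=\|\phi*f\|_{L^{2}}\le\|\phi\|_{L^{2}}\|f\|_{L^{1}}$ from Section~\ref{sec:preliminaries}, together with $\|\phi\|_{L^{2}}\le\|\phi\|_{L^{\infty}}\,w(B(0,r_{1}))^{1/2}$ and $w(B(0,r_{1}))\sim r_{1}^{\mathbf{N}}$. Everything then reduces to the uniform bound
\[
\|\tau_{-\mathbf{y}}\Psi\|_{L^{2}(B(0,R))}\le C\,w(B(0,R))^{1/2}\sim C\,R^{\mathbf{N}/2}\qquad(\|\Psi\|_{L^{\infty}}\le1),
\]
since then the product of the two factors is $\le C\|\phi\|_{L^{\infty}}\|f\|_{L^{1}}\,r_{1}^{\mathbf{N}/2}R^{\mathbf{N}/2}=C\,(r_{1}(r_{1}+r_{2}))^{\mathbf{N}/2}\|\phi\|_{L^{\infty}}\|f\|_{L^{1}}$, which is the claim.

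This displayed inequality is the only genuine ingredient, and it is the \emph{main obstacle}. The decisive point is that one must produce the factor $w(B(0,R))^{1/2}$ and not $w(B(\mathbf{y},R))^{1/2}$: the trivial estimate $\|\tau_{-\mathbf{y}}\Psi\|_{L^{2}(B(0,R))}\le\|\Psi\|_{L^{2}}\le(|G|\,w(B(\mathbf{y},R)))^{1/2}$ is worthless, because $w(B(\mathbf{y},R))$ grows without bound as $\|\mathbf{y}\|\to\infty$. By the same self-transpose duality the inequality is equivalent to $\|\tau_{\mathbf{y}}\Theta\|_{L^{1}}\le C\,w(B(0,R))^{1/2}\|\Theta\|_{L^{2}}$ for all $\Theta$ supported in $B(0,R)$; I would prove this by writing $\|\tau_{\mathbf{y}}\Theta\|_{L^{1}}=\sum_{\sigma\in G}\int_{B(\sigma\mathbf{y},R)}|\tau_{\mathbf{y}}\Theta|\,dw$ and applying Cauchy--Schwarz on each orbit ball against $\int_{B(\sigma\mathbf{y},R)}|\tau_{\mathbf{y}}\Theta|^{2}\,dw$. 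The heart of the argument is the localized estimate $\int_{B(\sigma\mathbf{y},R)}|\tau_{\mathbf{y}}\Theta|^{2}\,dw\le C\,\tfrac{w(B(0,R))}{w(B(\mathbf{y},R))}\,\|\Theta\|_{L^{2}}^{2}$, expressing that transporting the $L^{2}$-mass of $\Theta$ out to the orbit of $\mathbf{y}$ dilutes its local density by precisely the measure ratio; granting it, Cauchy--Schwarz yields $\int_{B(\sigma\mathbf{y},R)}|\tau_{\mathbf{y}}\Theta|\,dw\le C\,w(B(0,R))^{1/2}\|\Theta\|_{L^{2}}$ and summing over the $|G|$ reflections finishes.

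The localized $L^{2}$-estimate is exactly where softness breaks down: it cannot follow from the contraction property (which sees only the global $L^{2}$-norm), and it is false at the $L^{\infty}$-level, since Dunkl translations are not known to be bounded on $L^{\infty}$ for non-radial functions. The natural tool is the Gaussian bound of Theorem~\ref{theorem:heat}: representing $\tau_{\mathbf{y}}\Theta$ through the heat semigroup (which commutes with $\tau_{\mathbf{y}}$) and inserting the orbit-distance factor $e^{-c\,d(\mathbf{x},\mathbf{y})^{2}/t}$ against $V(\mathbf{x},\mathbf{y},\sqrt{t})^{-1}$ is what should generate the ratio $w(B(0,R))/w(B(\mathbf{y},R))$ after integration over $B(\sigma\mathbf{y},R)$. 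This is the technical core drawn from the analysis of Dunkl translations in~\cite{DzH}, and it is the step I expect to absorb essentially all of the work.
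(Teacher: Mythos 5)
A preliminary remark: the paper does not prove this proposition at all; it is quoted verbatim from \cite[Proposition 4.4]{DzH}, so there is no internal proof to compare against. Judged on its own terms, your argument has a genuine gap at exactly the point you yourself single out. The reductions you perform first (support of $f*\phi$ inside $B(0,R)$ with $R=r_1+r_2$, the adjoint relation $\int\tau_{\mathbf y}g\,\Psi\,dw=\int g\,\tau_{-\mathbf y}\Psi\,dw$, Cauchy--Schwarz, and $\|f*\phi\|_{L^2(dw)}\le\|\phi\|_{L^2(dw)}\|f\|_{L^1(dw)}$) are all correct, but they use the radiality of $\phi$ only through $\|\phi\|_{L^2(dw)}\le\|\phi\|_{L^\infty}w(B(0,r_1))^{1/2}$ and through Theorem~\ref{teo:support}, which holds for arbitrary $L^2$ functions. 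What is left to prove is therefore
\begin{equation*}
\|\tau_{\mathbf y}\Theta\|_{L^1(dw)}\le C\,w(B(0,R))^{1/2}\,\|\Theta\|_{L^2(dw)}
\qquad\text{for all }\Theta\in L^2(dw)\text{ with }\supp\Theta\subseteq B(0,R),
\end{equation*}
uniformly in $\mathbf y$. This is strictly \emph{stronger} than the proposition (apply it to $\Theta=f*\phi$ and you are done in one line) and it involves no radial convolution structure whatsoever. The gain over the trivial bound $\|\tau_{\mathbf y}\Theta\|_{L^1}\le w(\mathcal O(B(\mathbf y,R)))^{1/2}\|\Theta\|_{L^2}$ is a factor $\bigl(w(B(0,R))/w(B(\mathbf y,R))\bigr)^{1/2}$, which tends to $0$ as $\|\mathbf y\|\to\infty$; obtaining such uniform $L^1$ control of Dunkl translates of arbitrary non-radial functions is precisely the difficulty the introduction describes as open, and the only known device for beating the trivial bound is R\"osler's formula \eqref{eq:translation-radial}, which requires radiality. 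The hypothesis that $\phi$ be radial is in the statement for this reason; your reduction discards it before the hard step, so the ``technical core'' you defer is at least as hard as the proposition itself.

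The mechanism you propose for that remaining localized $L^2$ estimate does not close the gap either. Writing $\tau_{\mathbf y}\Theta=\lim_{t\to0}\tau_{\mathbf y}e^{t\Delta}\Theta$ and expanding $e^{t\Delta}\Theta$ against $h_t(\cdot,\mathbf z)$ produces the kernels $\tau_{\mathbf y}\bigl(h_t(\cdot,\mathbf z)\bigr)=\tau_{\mathbf y}\tau_{-\mathbf z}h_t$, i.e.\ translates of the Gaussian in two independent directions; Theorem~\ref{theorem:heat} controls only the single translate $h_t(\mathbf x,\mathbf y)=\tau_{\mathbf x}h_t(-\mathbf y)$ of the radial function $h_t$, and no pointwise or $L^1$ bound of comparable quality is available for the iterated translate --- for the same reason that translations of non-radial functions are not understood. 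The semigroup therefore merely relocates the obstacle. A workable proof must keep the factorization $g=f*\phi$ alive into the final estimate, e.g.\ by reducing, via $\tau_{\mathbf y}(f*\phi)=f*(\tau_{\mathbf y}\phi)$ and Minkowski's inequality, to bounds on iterated translates of the \emph{radial} function $\phi$, where $\|\tau_{\mathbf y}\phi\|_{L^\infty}\le\|\phi\|_{L^\infty}$ and $\|\tau_{\mathbf y}\phi\|_{L^1}\le\|\phi\|_{L^1}$ (both consequences of \eqref{eq:translation-radial}) can be combined with Theorem~\ref{teo:support}; this is essentially the route of the cited proof in \cite{DzH}.
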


\begin{proposition}\label{propo:j_small}
Let $0 \leq \delta\leq 1$ be such that $\mathbf N\slash 2+\delta<s_0$. Suppose that the function $K$ satisfies~\eqref{eq:K_der2}.
There is a constant $C>0$ such that for all $j \in \mathbb{Z}$ we have
\begin{equation}\label{eq:K_j_bound}
    \int_{\mathbb{R}^N}\sup_{2^{j-1}\leq a\leq b<2^{j+1}}|K^{\{a,b\}}(\mathbf{x},\mathbf{y})|d(\mathbf{x},\mathbf{y})^{\delta}\,dw(\mathbf{y}) \leq C2^{j\delta} \quad \text{for all } \mathbf{x} \in \mathbb{R}^N,
\end{equation}
\begin{equation}\label{eq:K_j_Holder}
    \int_{\mathbb{R}^N}\sup_{2^{j-1}\leq a\leq b<2^{j+1}}|K^{\{a,b\}}(\mathbf{x},\mathbf{y})-K^{\{a,b\}}(\mathbf{x},\mathbf{y}')|\,dw(\mathbf{x}) \leq C2^{-j\delta}\|\mathbf{y}-\mathbf{y}'\|^{\delta} \quad \text{for all } \mathbf{y}, \mathbf y' \in \mathbb{R}^N ,
\end{equation}
\begin{equation}\label{eq:K_j_Holder_22}
    \int_{\mathbb{R}^N}\sup_{2^{j-1}\leq a\leq b<2^{j+1}}|K^{\{a,b\}}(\mathbf{y},\mathbf{x})-K^{\{a,b\}}(\mathbf{y'},\mathbf{x})|\,dw(\mathbf{x}) \leq C2^{-j\delta}\|\mathbf{y}-\mathbf{y}'\|^{\delta} \quad \text{for all } \mathbf{y}, \mathbf y' \in \mathbb{R}^N ,
\end{equation}
\begin{equation}\label{eq:K^n_Holder_22}
    \int_{\mathbb{R}^N}\sup_{2^{j-1}\leq a \leq 2^{j+1}}|K^{\{a\}}(\mathbf{y},\mathbf{x})-K^{\{a\}}(\mathbf{y'},\mathbf{x})|\,dw(\mathbf{x}) \leq C2^{-j\delta}\|\mathbf{y}-\mathbf{y}'\|^{\delta} \quad \text{for all } \mathbf{y}, \mathbf y' \in \mathbb{R}^N .
\end{equation}
\end{proposition}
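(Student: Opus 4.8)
The plan is to reduce every bound to the Dunkl translation of a single smooth bump localized at the dyadic scale $t=2^{j}$, and to route all $L^{1}(dw)$ estimates through the convolution structure, since $\tau_{\mathbf x}$ is not known to act boundedly on $L^{1}(dw)$ for non-radial functions. The key device is the (scaled) Bessel representation: with $G_{a,b}=(I-t^{2}\Delta)^{s_{0}/2}K^{\{a,b\}}$ — a genuine differential–difference operator because $s_{0}$ is even — one has $K^{\{a,b\}}=c\,G_{a,b}*J^{\{s_{0}\}}_{t}$, where $J^{\{s_{0}\}}_{t}(\mathbf x)=t^{-\mathbf N}J^{\{s_{0}\}}(\mathbf x/t)$ (cf.\ \eqref{eq:subordination}). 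By \eqref{eq:K_der2_Dunkl} and the Leibniz rule, and since every reflection preserves the annulus $\{a/2\le\|\mathbf x\|\le b\}$, the function $G_{a,b}$ is supported in $B(0,2t)$ and bounded by $Ct^{-\mathbf N}$, whence $\|G_{a,b}\|_{L^{1}(dw)}\le C$ uniformly for $a,b\in[2^{j-1},2^{j+1})$; the same holds, with an extra factor $t^{-1}$, for $\partial_{a}K^{\{a,b\}}$ and $\partial_{b}K^{\{a,b\}}$, which I will use to remove the supremum over $(a,b)$ by the fundamental theorem of calculus.

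For \eqref{eq:K_j_bound} I first note, by Theorem~\ref{teo:support}, that $K^{\{a,b\}}(\mathbf x,\cdot)$ is supported where $d(\mathbf x,\mathbf y)\le 2t$, so $d(\mathbf x,\mathbf y)^{\delta}\le C2^{j\delta}$ there and it remains to bound $\int\sup_{a,b}|K^{\{a,b\}}(\mathbf x,\mathbf y)|\,dw(\mathbf y)$ by $C$. Splitting $J^{\{s_{0}\}}_{t}=\sum_{l\ge0}\psi_{l}$ into radial pieces supported in $\{\|\mathbf x\|\sim 2^{l}t\}$ with $\|\psi_{l}\|_{L^{\infty}}\le C_{M}\,t^{-\mathbf N}2^{-2lM}$ (Proposition~\ref{propo:bessel_estimate}, $M$ fixed large), Proposition~\ref{propo:compact_supports} applied to each $G_{a,b}*\psi_{l}$ gives $\|\tau_{\mathbf x}(G_{a,b}*\psi_{l})(-\cdot)\|_{L^{1}(dw)}\le C(2^{l}t)^{\mathbf N}\|\psi_{l}\|_{L^{\infty}}\|G_{a,b}\|_{L^{1}(dw)}\le C2^{l(\mathbf N-2M)}$, which sums in $l$ and is uniform in $\mathbf x$. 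The supremum over $(a,b)$ is disposed of by writing $\sup_{a,b}|K^{\{a,b\}}|\le|K^{\{2^{j-1},2^{j+1}\}}|+\int|\partial_{a}K^{\{a,\cdot\}}|\,da+\int|\partial_{b}K^{\{\cdot,b\}}|\,db$ and applying the same bound to each integrand; the extra $t^{-1}$ is balanced by the length $\sim t$ of the range of integration.

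The two Hölder estimates are the heart of the matter, and they are equivalent: from the reflection identity $\tau_{\mathbf x}g(-\mathbf y)=\tau_{\mathbf y}\check g(-\mathbf x)$ with $\check g(\mathbf w)=g(-\mathbf w)$ (immediate on the transform side, using $dw(-\mathbf x)=dw(\mathbf x)$), together with $\check{K}^{\{a,b\}}=(\check K)^{\{a,b\}}$ and the fact that $\check K$ satisfies \eqref{eq:K_der2} and \eqref{eq:uni_on_annulus}, estimate \eqref{eq:K_j_Holder} for $K$ is exactly \eqref{eq:K_j_Holder_22} for $\check K$. Thus it suffices to establish the $L^{1}(dw)$ Hölder continuity $\|\tau_{\mathbf y}K^{\{a,b\}}(-\cdot)-\tau_{\mathbf y'}K^{\{a,b\}}(-\cdot)\|_{L^{1}(dw)}\le C2^{-j\delta}\|\mathbf y-\mathbf y'\|^{\delta}$ in \eqref{eq:K_j_Holder_22}. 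After removing the supremum as above, I expand $\tau_{\mathbf y}K^{\{a,b\}}-\tau_{\mathbf y'}K^{\{a,b\}}=\sum_{l}\big(\tau_{\mathbf y}(G_{a,b}*\psi_{l})-\tau_{\mathbf y'}(G_{a,b}*\psi_{l})\big)$ and bound each summand two ways: by Proposition~\ref{propo:compact_supports} (the mass bound $C2^{l(\mathbf N-2M)}$), and by a Lipschitz refinement of it — obtained by paralleling the proof of Proposition~\ref{propo:compact_supports} while controlling $\|\tau_{\mathbf y}\psi_{l}-\tau_{\mathbf y'}\psi_{l}\|_{L^{2}(dw)}\le C(2^{l}t)^{-1}\|\mathbf y-\mathbf y'\|\,\|\psi_{l}\|_{L^{2}(dw)}$ via Corollary~\ref{coro:Roesler} (equivalently using \eqref{eq:Gauss-Lipschitz}, Lemma~\ref{lem:Lipschitz}) — giving $C2^{l(\mathbf N-2M)}\,\|\mathbf y-\mathbf y'\|/(2^{l}t)$. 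Interpolating yields $C2^{l(\mathbf N-2M)}\min\!\big(1,(\|\mathbf y-\mathbf y'\|/(2^{l}t))^{\delta}\big)$, and summing over $l$ produces $C(\|\mathbf y-\mathbf y'\|/t)^{\delta}=C2^{-j\delta}\|\mathbf y-\mathbf y'\|^{\delta}$. Finally \eqref{eq:K^n_Holder_22} follows by decomposing $K^{\{a\}}=\sum_{i\ge0}K^{\{2^{i}a,2^{i+1}a\}}$ into dyadic annuli and summing \eqref{eq:K_j_Holder_22} over the scales $2^{i}a$, the factor $2^{-i\delta}$ ensuring convergence.

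The decisive obstacle throughout is the absence of $L^{p}(dw)$-boundedness of $\tau_{\mathbf x}$ for $p\ne2$: one cannot estimate $\|\tau_{\mathbf y}K^{\{a,b\}}(-\cdot)\|_{L^{1}(dw)}$ or its modulus of continuity directly, and must keep $K^{\{a,b\}}$ in the factored form $G_{a,b}*J^{\{s_{0}\}}_{t}$ so that Proposition~\ref{propo:compact_supports} (and Lemma~\ref{lem:nonradial_estimation}) applies to the compactly supported radial pieces. I expect the genuinely delicate point to be the Lipschitz refinement of Proposition~\ref{propo:compact_supports} \emph{with a constant uniform in $\mathbf y$} — the uniformity over position, not just scale, is what forces one to beat the growth of the measure $w(B(\mathbf y,t))$ — and it is precisely here that the regularity threshold $s_{0}>\mathbf N/2+\delta$ is used: it guarantees both that $G_{a,b}$ lies in $L^{1}(dw)$ with a uniform bound and that the gain $\|\mathbf y-\mathbf y'\|^{\delta}$, with $\delta$ admissible in Lemma~\ref{lem:Lipschitz}, can be extracted while every dyadic sum stays summable.
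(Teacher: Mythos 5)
Your overall architecture coincides with the paper's: reduce to the scale $t=2^{j}$ (the paper scales to $j=0$, you carry $t$ along, which is equivalent), kill the supremum over $(a,b)$ by writing $K^{\{a,b\}}$ as an integral in the truncation parameter of $\Phi(t,\cdot)=-K(\cdot)t^{-2}\|\cdot\|\widetilde\phi'(t^{-1}\|\cdot\|)$, factor through $(I-\Delta)^{s_0/2}$ and a Bessel potential, decompose the radial Bessel kernel dyadically, and feed each piece into Proposition~\ref{propo:compact_supports}; the support statement of Theorem~\ref{teo:support} handles the factor $d(\mathbf x,\mathbf y)^{\delta}$ in \eqref{eq:K_j_bound}, and \eqref{eq:K^n_Holder_22} follows by telescoping over dyadic annuli. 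All of that matches the paper, and your reduction of \eqref{eq:K_j_Holder} to \eqref{eq:K_j_Holder_22} via $K(\mathbf x,\mathbf y)=\check K(\mathbf y,\mathbf x)$ is a correct (if unnecessary) shortcut --- the paper simply observes that \eqref{eq:F_0_bound} and \eqref{eq:F_t_bound} give both marginals by identical arguments.

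The genuine divergence, and the gap, is in how you extract the modulus $\|\mathbf y-\mathbf y'\|^{\delta}$. You keep the full Bessel factor $J^{\{s_0\}}_t$ and rest the Hölder gain on an unproved ``Lipschitz refinement of Proposition~\ref{propo:compact_supports},'' namely an $L^1(dw)$ bound on $\tau_{\mathbf y}(G_{a,b}*\psi_l)-\tau_{\mathbf y'}(G_{a,b}*\psi_l)$ with a constant \emph{uniform in} $\mathbf y$. You correctly flag this as the delicate point, but flagging it is not proving it: the naive route (Plancherel for $\|\tau_{\mathbf y}h-\tau_{\mathbf y'}h\|_{L^2}$ via $|E(i\xi,\mathbf y)-E(i\xi,\mathbf y')|\leq\|\xi\|\,\|\mathbf y-\mathbf y'\|$, then Cauchy--Schwarz on the support) produces $w(\mathcal O(B(\mathbf y,r)))^{1/2}$, which grows with $\|\mathbf y\|$ since $w$ is not translation invariant; eliminating that growth is exactly the content of Proposition~\ref{propo:compact_supports}, and your refinement would have to redo its proof, not merely cite it, and would additionally need gradient bounds on $J^{\{s_0\}}$ (and the correct size $\|\psi_l\|_{L^\infty}\sim t^{-\mathbf N}2^{l(s_0-\mathbf N)}$ for $l\leq0$, not $t^{-\mathbf N}2^{-2lM}$ as you wrote) that the paper never states. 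The paper circumvents all of this by splitting $s_0=s_1+s$ with $s_1>\mathbf N/2$ and $s>\delta$, writing $\Phi(t,\cdot)=F(t,\cdot)*J^{\{s\}}$ with $F=\widetilde\Phi*J^{\{s_1\}}$, so that the Hölder modulus is charged entirely to the \emph{radial} factor $J^{\{s\}}$ through Lemma~\ref{lem:Lipschitz} --- whose position-uniform constant comes for free from the heat-kernel Lipschitz estimate \eqref{eq:Gauss-Lipschitz} and the radial translation formula \eqref{eq:translation-radial} --- while the non-radial factor $F$ is controlled only through the mass bounds \eqref{eq:F_0_bound}--\eqref{eq:F_t_bound} you already have. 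I recommend adopting that splitting: it uses only tools already established and is where the budget $\mathbf N/2+\delta<s_0$ is spent ($s_1>\mathbf N/2$ for summability of the dyadic Bessel pieces, $s=s_0-s_1>\delta$ for Lemma~\ref{lem:Lipschitz}).
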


\begin{proof}
It  suffices to prove the proposition for $j=0$ and then use scaling. Let $\widetilde \phi:[0,\infty)\to \mathbb R$   be defined by relation $\widetilde\phi(\| \mathbf y\|)=\phi (\mathbf y)$.   Set $\Phi(t,\mathbf y)=  -K(\mathbf y)t^{-2} \| \mathbf y\|\widetilde\phi '(t^{-1}\| \mathbf y\|)$. Assume that $2^{-1} \leq a\leq b \leq 2$, then 
\begin{align*}
        K^{\{a,b\}} (\mathbf y)&=K(\mathbf y)\big(\widetilde \phi (b^{-1}\| \mathbf y\|)-\widetilde\phi (a^{-1}\|\mathbf y\|)\big)=\int_a^b K(\mathbf y)\frac{d}{dt} \{\widetilde \phi(t^{-1}\| \mathbf y\|)\}\, dt= \int_a^b \Phi(t,\mathbf y)\, dt,
\end{align*}
 where the integral converges in $L^2(dw)$, because  $t\mapsto \Phi(t,\cdot)$ is a continuous function from $(0,\infty)$ to $L^2(dw)$. Let us denote $\Phi(t,\mathbf x,\mathbf y)=\tau_{\mathbf x}(\Phi (t,\cdot))(-\mathbf y)$. 
 Since the Dunkl translation $\tau_{\mathbf{x}}$ is continuous on $L^2(dw)$, for fixed $\mathbf x\in\mathbb R^N$ we have 
 $$ K^{\{a,b\}}(\mathbf x,\mathbf y)=\int_a^b \Phi(t,\mathbf x,\mathbf y)\, dt,$$
 where the integral converges in $L^2(dw(\mathbf y))$. 
 
 Note that $\text{supp}\, \Phi(t,\cdot)\subseteq \{\mathbf y: t/2\leq \| \mathbf y\| \leq t\}\subseteq B(0,2)$, because $1/2 \leq a\leq t\leq b \leq 2$.  
 Let $\mathbf N/2<s_1\leq s_0$. Set
\begin{align*}
    F(t,\mathbf{y})={\widetilde{\Phi}(t,\cdot)}*(J^{\{s_1\}})(\mathbf{y}), 
\quad \text{where} \quad 
    \widetilde{\Phi}(t,\mathbf{y})=(I-\Delta)^{s_0/2}\Phi(t,\mathbf{y}).
\end{align*}
By the assumption~\eqref{eq:K_der2} (see also~\eqref{eq:K_der2_Dunkl}), $\text{supp}\, \widetilde{\Phi}(t,\cdot)\subseteq B(0,2)$ and $|\widetilde{\Phi} (t,\mathbf x)|\leq C'$ for $1/2\leq t\leq 2$, where the constant $C'$ depends only on the constants $C_\beta$ in \eqref{eq:K_der2} and the (fixed) function $\phi$. 
Consequently, $\widetilde{\Phi}(t,\cdot) \in L^p(dw)$ for $1\leq p\leq \infty$. In particular  there is $C''>0$ independent of $j$ such that
\begin{equation}\label{eq:assumption_usage}
   \sup_{1/2\leq t\leq 2} \|\widetilde{\Phi}(t,\cdot)\|_{L^1(dw)} \leq C'' .
\end{equation}
Let $\psi(\mathbf y)=\phi(\mathbf y)-\phi(2\mathbf y)$.  We write
\begin{align*}
\begin{split}
    J^{\{s_1\}}(\mathbf{y})&=\sum_{\ell \in \mathbb{Z}}\psi(2^{-\ell}\mathbf{y})(J^{\{s_1\}})(\mathbf{y})=:\sum_{\ell \in \mathbb{Z}}\psi^{\{\ell \}}(\mathbf{y}),
\end{split}
\end{align*}
where the convergence is pointwise and in $L^1(dw)$. Recall that $\psi^{\{\ell\}}$ are radial functions. Hence,
\begin{align*}
    F(t,\cdot)=\sum_{\ell \in \mathbb{Z}} \widetilde{\Phi}(t,\cdot)*\psi^{\{\ell\}}
\end{align*}
with convergence in $L^2(dw)$. Therefore for all $\mathbf x\in\mathbb R^N$,
\begin{align*}
    F(t,\mathbf x,\mathbf y):=\tau_{\mathbf x} F(t,-\mathbf y)=\sum_{\ell \in \mathbb{Z}} \tau_{\mathbf x}\big( \widetilde{\Phi}(t,\cdot)*\psi^{\{\ell\}}\big)(-\mathbf y),
\end{align*}
where the series converges in $L^2(dw)$. We shall show that the convergence is in $L^1(dw)$ as well and the $L^1$-norm of the sum is independent of $t\in [1/2,2]$ and $\mathbf x \in \mathbb{R}^N$.

To this end we apply \eqref{eq:assumption_usage} together with  Proposition~\ref{propo:compact_supports}  and Proposition~\ref{propo:bessel_estimate} (with $M>\mathbf{N}$) and obtain
\begin{align*}
     \sum_{\ell \in \mathbb{Z}} \|\tau_{\mathbf{x}}(\widetilde{\Phi}(t,\cdot)*\psi^{\{\ell\}})(\cdot)\|_{L^1(dw)}
      & \leq C \sum_{\ell \in \mathbb{Z}} 2^{\ell  \mathbf{N}/2}(2+2^{\ell})^{\mathbf{N}/2}\|\widetilde{\Phi}(t,\cdot)\|_{L^1(dw)}\|\psi^{\{\ell\}}\|_{L^{\infty}}\\
      &\leq C'\sum_{\ell \leq 0}  2^{\ell \mathbf{N}/2}2^{\ell(s_0-\mathbf{N})} + C'\sum_{\ell >0} 2^{\ell \mathbf N}2^{-\ell M}<\infty.
\end{align*}
Thus,
 \begin{equation}\label{eq:F_0_bound}  \sup_{1/2\leq t\leq 2}\int_{\mathbb{R}^N} |F(t,\mathbf x,\mathbf y)|dw(\mathbf y)\leq C''<\infty, 
\end{equation}
 where $C''$ depends on the constants in \eqref{eq:K_der2}. By the same arguments, 
 \begin{equation}
     \label{eq:F_t_bound}  \sup_{1/2\leq t\leq 2}\int_{\mathbb{R}^N} |F(t,\mathbf x,\mathbf y)|dw(\mathbf x)\leq C''<\infty.
 \end{equation}
 Note that if  $s_1=s_0$ then $F(t,\cdot)=\Phi(t,\cdot)$ and 
 $$ \sup_{1/2\leq a\leq b\leq 2}  | K^{\{a,b\}}(\mathbf x,\mathbf y)|=  \sup_{1/2\leq a\leq b\leq 2}\Big|\int_a^b \Phi(t,\mathbf x,\mathbf y)\, dt\Big|\leq \int_{1/2}^2 |\Phi(t,\mathbf x,\mathbf y)|\, dt. $$
Consequently, applying \eqref{eq:F_0_bound} with $F(t,\cdot)=\Phi(t,\cdot)$, we obtain 
\begin{equation}\label{eq:K^ab}
 \int \sup_{1/2\leq a\leq b\leq 2}  | K^{\{a,b\}}(\mathbf x,\mathbf y)|\, dw(\mathbf y)\leq \int_{\mathbb R^N}\int_{1/2}^2 |\Phi(t,\mathbf x,\mathbf y)|\, dt\, dw(\mathbf y)\leq C'''. 
\end{equation}
Now \eqref{eq:K_j_bound} for $j=0$  follows from \eqref{eq:K^ab}, since, thanks to Theorem~\ref{teo:support}, $K^{\{a,b\}}(\mathbf x,\mathbf y)=0$  if $d(\mathbf x,\mathbf y)>2$.

In order to prove \eqref{eq:K_j_Holder} we take $s_1,s>0$ such that $\mathbf N/2 <s_1$ and $s_0=s_1+s$. Then 
\begin{align*}
    \Phi(t,\cdot)= \widetilde{\Phi}(t,\cdot)* J^{\{s_1\}}*J^{\{s\}}=F(t,\cdot)*J^{\{s\}}.
\end{align*} 
Thus, 
\begin{equation}\label{eq:supab}
\begin{split}
      & \sup_{1/2\leq a\leq b\leq 2} |K^{\{a,b\}}(\mathbf x,\mathbf y)-K^{\{a,b\}}(\mathbf x,\mathbf y')| =  \sup_{1/2\leq a\leq b\leq 2}\Big| \int_a^b \Big(\Phi(t,\mathbf x,\mathbf y)-\Phi(t,\mathbf x,\mathbf y')\Big)\, dt \Big| \\
      &\leq   \sup_{1/2\leq a\leq b\leq 2} \int_{a}^b     \int | F(t,\mathbf x,\mathbf z)| \Big|J^{\{s\}}(\mathbf z,\mathbf y)- J^{\{s\}}(\mathbf z,\mathbf y')\Big|\, dw(\mathbf z)\, dt \\
      &\leq \int_{1/2}^2     \int | F(t,\mathbf x,\mathbf z)| \Big|J^{\{s\}}(\mathbf z,\mathbf y)- J^{\{s\}}(\mathbf z,\mathbf y')\Big|\, dw(\mathbf z)\, dt .\\
\end{split}      
\end{equation}
Integrating~\eqref{eq:supab} with respect to $dw(\mathbf x)$ and using~\eqref{eq:F_t_bound} together with 
 Lemma~\ref{lem:Lipschitz}, we obtain \eqref{eq:K_j_Holder} for $j=0$. The proof of \eqref{eq:K_j_Holder_22} for $j=0$ is identical. 

 In order to prove \eqref{eq:K_j_bound}, \eqref{eq:K_j_Holder}, and \eqref{eq:K_j_Holder_22} for arbitrary $j\in \mathbb Z$ we use scaling. To this end we fix $j\in\mathbb Z$ and 
 write  $G_j(\mathbf x)=2^{j\mathbf N}K(2^j\mathbf x)$. Then $G_j$ satisfies  \eqref{eq:K_der2} with the same constants $C_\beta$. Moreover, one can easily check that 
 $$ K^{\{a,b\}}(\mathbf x)=2^{-j\mathbf N} G_j^{\{2^{-j}a,2^{-j}b\}}(2^{-j}\mathbf x),$$ 
 and, consequently, 
 $$ K^{\{a,b\}}(\mathbf x,\mathbf y)=2^{-j\mathbf N} G_j^{\{2^{-j}a,2^{-j}b\}}(2^{-j}\mathbf x,2^{-j}\mathbf y),$$ 
 Now if $2^{j-1}\leq a\leq b\leq 2^{j+1}$, then $1/2\leq 2^{-j}a\leq 2^{-j}b\leq 2$ and  we apply already proved results to $G_j$ and obtain the desired inequalities.

 We now turn to prove \eqref{eq:K^n_Holder_22}.    If $2^{j-1}\leq a\leq 2^{j+1}$, then we write 
 $$K^{\{a\}}= K^{\{a,2^{j+1}\}}+\sum_{\ell=j+1}^\infty K^{\{2^{\ell},2^{\ell+1}\}},  $$
 where the convergence is in $L^2(dw)$.
  Now application of \eqref{eq:K_j_Holder_22} gives \eqref{eq:K^n_Holder_22}.
\end{proof}

For a cube $Q\subset \mathbb R^N$, let $c_Q$ be its center and  $\diam(Q)$ be the length of its diameter. Let $Q^{*}$ denote the cube with the same center $c_Q$ such that $\diam(Q^{*})=2\diam(Q)$. Let us remind that
\begin{align*}
    \mathcal{O}(Q^{*})=\{\sigma(\mathbf{x})\,:\,x\in Q^{*}, \, \sigma \in G\}.
\end{align*}
The following corollary is a direct consequence of Propositions~\ref{propo:j_small}.
\begin{corollary}\label{coro:j_small_and_large}
There are constants $C, \delta>0$ such that for any $j \in \mathbb{Z}$ and any every cube $Q\subset\mathbb R^N$, and $\mathbf y,\mathbf y'\in Q$ we have
\begin{align*}
    \int_{\mathbb{R}^N \setminus \mathcal{O}(Q^{*})}\sup_{2^{j-1}\leq a\leq b\leq 2^{j+1}}|K^{\{a,b\}}(\mathbf{x},\mathbf{y})&-K^{\{a,b\}}(\mathbf{x},\mathbf{y}')|\,dw(\mathbf{x}) \\
    &\leq C\min(2^{-j\delta}\diam(Q)^{\delta},2^{\delta j}(\diam(Q))^{-\delta}).
\end{align*}
\end{corollary}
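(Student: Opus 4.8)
For all $j\in\mathbb Z$, all cubes $Q$, and all $\mathbf y,\mathbf y'\in Q$,
$$\int_{\mathbb R^N\setminus\mathcal O(Q^*)}\sup_{2^{j-1}\le a\le b\le 2^{j+1}}|K^{\{a,b\}}(\mathbf x,\mathbf y)-K^{\{a,b\}}(\mathbf x,\mathbf y')|\,dw(\mathbf x)\le C\min(2^{-j\delta}\operatorname{diam}(Q)^\delta,\,2^{\delta j}\operatorname{diam}(Q)^{-\delta}).$$

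The plan splits the bound into two regimes according to which term in the $\min$ is active, and the whole point is that the $\sup_{a\le b}$ lives on the dyadic annulus $2^{j-1}\le a\le b\le 2^{j+1}$, so the kernel $K^{\{a,b\}}$ is morally at "scale $2^j$." Because $\mathbf y,\mathbf y'\in Q$ we have $\|\mathbf y-\mathbf y'\|\le\operatorname{diam}(Q)$.

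**Small-$\operatorname{diam}(Q)$ regime.** Here I want the first term $2^{-j\delta}\operatorname{diam}(Q)^\delta$. This is essentially immediate from the Hölder-type estimate \eqref{eq:K_j_Holder_22} of Proposition~\ref{propo:j_small}, applied with the same $\delta>0$ (fixed once and for all so that $0\le\delta\le1$ and $\mathbf N/2+\delta<s_0$). Indeed, enlarging the domain of integration from $\mathbb R^N\setminus\mathcal O(Q^*)$ to all of $\mathbb R^N$ only increases the integral, so
$$\int_{\mathbb R^N\setminus\mathcal O(Q^*)}\sup_{a\le b}|K^{\{a,b\}}(\mathbf x,\mathbf y)-K^{\{a,b\}}(\mathbf x,\mathbf y')|\,dw(\mathbf x)\le C\,2^{-j\delta}\|\mathbf y-\mathbf y'\|^\delta\le C\,2^{-j\delta}\operatorname{diam}(Q)^\delta.$$

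**Large-$\operatorname{diam}(Q)$ regime.** Here I want the second term $2^{\delta j}\operatorname{diam}(Q)^{-\delta}$, and this is where the restriction to $\mathbb R^N\setminus\mathcal O(Q^*)$ is genuinely used. I would use the triangle inequality to bound the integrand by the sum of the two separate terms $\sup_{a\le b}|K^{\{a,b\}}(\mathbf x,\mathbf y)|$ and $\sup_{a\le b}|K^{\{a,b\}}(\mathbf x,\mathbf y')|$, and estimate each by \eqref{eq:K_j_bound}. The key geometric observation is that for $\mathbf x\notin\mathcal O(Q^*)$ and $\mathbf y\in Q$ one has $d(\mathbf x,\mathbf y)\ge c\operatorname{diam}(Q)$: since $\mathbf y$ lies in $Q\subseteq Q^*$ and the orbit of $\mathbf x$ misses $\mathcal O(Q^*)$, every point $\sigma(\mathbf x)$ is at Euclidean distance at least $\tfrac12\operatorname{diam}(Q)$ from $\mathbf y$ (using $\operatorname{diam}(Q^*)=2\operatorname{diam}(Q)$). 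Inserting the factor $1=d(\mathbf x,\mathbf y)^{-\delta}d(\mathbf x,\mathbf y)^{\delta}$ and using $d(\mathbf x,\mathbf y)^{-\delta}\le(c\operatorname{diam}(Q))^{-\delta}$ on the region of integration, I get
$$\int_{\mathbb R^N\setminus\mathcal O(Q^*)}\sup_{a\le b}|K^{\{a,b\}}(\mathbf x,\mathbf y)|\,dw(\mathbf x)\le(c\operatorname{diam}(Q))^{-\delta}\int_{\mathbb R^N}\sup_{a\le b}|K^{\{a,b\}}(\mathbf x,\mathbf y)|\,d(\mathbf x,\mathbf y)^{\delta}\,dw(\mathbf x)\le C\,2^{j\delta}\operatorname{diam}(Q)^{-\delta},$$
the last inequality being \eqref{eq:K_j_bound}; the same bound holds with $\mathbf y$ replaced by $\mathbf y'$, and summing the two gives the claimed second term. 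The main (modest) obstacle is verifying the geometric lower bound $d(\mathbf x,\mathbf y)\ge c\operatorname{diam}(Q)$ for $\mathbf x\notin\mathcal O(Q^*)$ correctly in terms of the orbit distance $d$, since $d$ minimizes over the Weyl group $G$ and one must check that leaving the orbit $\mathcal O(Q^*)$ really forces every group image of $\mathbf x$ away from $Q$; once that is in hand, taking the minimum of the two regime bounds yields the stated $\min$ and completes the proof. $\qed$
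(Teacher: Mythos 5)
Your argument is correct and is essentially the route the paper intends: the Corollary is stated there as a direct consequence of Proposition~\ref{propo:j_small}, with the first branch of the $\min$ coming from the H\"older estimate and the second from the restriction to $\mathbb{R}^N\setminus\mathcal{O}(Q^{*})$. Two small citation points. First, for the small-$\diam(Q)$ regime the estimate you need is \eqref{eq:K_j_Holder} (difference in the \emph{second} argument, integration in $dw(\mathbf{x})$), not \eqref{eq:K_j_Holder_22}, which differences the first argument; both are in the Proposition, so this is harmless. Second, \eqref{eq:K_j_bound} as stated integrates in $dw(\mathbf{y})$ for fixed $\mathbf{x}$, whereas your large-$\diam(Q)$ regime needs the integral in $dw(\mathbf{x})$ for fixed $\mathbf{y}$; the symmetric version does hold (it follows from \eqref{eq:F_t_bound} in the proof of the Proposition, and the paper itself invokes it in the proof of Theorem~\ref{theorem:strong_type_truncated1}), but you should say so rather than cite \eqref{eq:K_j_bound} verbatim. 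Your geometric lower bound is sound, except that for $\mathbf{y}\in Q$ and $\sigma(\mathbf{x})\notin Q^{*}$ the Euclidean separation is $\diam(Q)/(2\sqrt{N})$ rather than $\diam(Q)/2$ (the side lengths differ by $\diam(Q)/\sqrt{N}$); only the existence of some $c=c_N>0$ matters. Finally, note an even shorter path to the second branch: by Theorem~\ref{teo:support}, $K^{\{a,b\}}(\mathbf{x},\mathbf{y})=0$ whenever $d(\mathbf{x},\mathbf{y})>2^{j+1}$, so the integral over $\mathbb{R}^N\setminus\mathcal{O}(Q^{*})$ vanishes outright once $c\,\diam(Q)>2^{j+1}$, and in the complementary range $2^{-j}\diam(Q)\lesssim 1$ the two branches of the $\min$ are comparable to the first one, so \eqref{eq:K_j_Holder} alone suffices.
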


\subsection{Proofs of Theorems~\ref{theorem:weak_type_truncated1},~\ref{theorem:strong_type_truncated1}, and~\ref{theorem:weak_type_K}}
\begin{proof}[Proof of Theorem~\ref{theorem:weak_type_truncated1}]
We will prove~\eqref{eq:Psi_nm_weak} first. Take any $0<a\leq b<\infty$. There is a constant $C_1>1$, which depends on the doubling constant in~\eqref{eq:doubling} and $N$, such that
$w(Q)\leq C_1 w(Q')$, where $Q'$ is any sub-cube of $Q$ such that  $\text{\rm diam} (Q')=\text{\rm diam} (Q)\slash 2$.

Let $f\in L^1(dw)\cap L^2(dw)$. Fix $\lambda>0$. We denote by $\mathcal Q_\lambda$ the collection of all maximal (disjoint)  dyadic cubes $Q_\ell$ in $\mathbb R^N$ satisfying
 $$ \lambda< \frac{1}{w(Q_\ell)}\int_{Q_\ell}|f(\mathbf{x})|\,dw(\mathbf{x}).$$
 Then
$$ \frac{1}{w(Q_\ell)} \int_{Q_\ell}|f(\mathbf{x})|\,dw(\mathbf{x})\leq C_1\lambda.$$
   Set $\Omega=\bigcup_{Q_\ell \in\mathcal Q_\lambda} Q_\ell$. Then $w(\Omega)\leq \lambda^{-1}\| f\|_{L^1(dw)}$.  Form the corresponding Calder\'on--Zygmund decomposition of $f$, namely, $f=\mathbf{g}+\mathbf{b}$, where

\begin{equation*}\label{eq:good}
\mathbf{g}(\mathbf{x})=f\chi_{\Omega^c}(\mathbf{x})+\sum_{\ell}w(Q_\ell)^{-1}\Big(\int_{Q_\ell}f(\mathbf{y})\,dw(\mathbf{y})\Big)\chi_{Q_\ell}(\mathbf{x}),
\end{equation*}
\begin{equation*}\label{eq:bad}
\mathbf{b}(\mathbf{x})=\sum_{\ell}\mathbf{b}_\ell(\mathbf{x}), \text{ where }\mathbf{b}_\ell(\mathbf{x})=\left(f(\mathbf{x})-w(Q_\ell)^{-1}\int_{Q_\ell}f(\mathbf{y})\,dw(\mathbf{y})\right)\chi_{Q_\ell}(\mathbf{x}).
\end{equation*}
Clearly, $\mathbf{g}(\mathbf x),\mathbf{b}(\mathbf x)\in L^1(dw(\mathbf x))\cap L^2(dw(\mathbf x))$, $|\mathbf{g}(\mathbf x)|\leq C_1\lambda$, $\| \mathbf{g}\|_{L^2(dw)}^2 \leq C\lambda \| f\|_{L^1}$, $\sum_\ell \| \mathbf{b}_\ell\|_{L^1(dw)}\leq C\| f\|_{L^1(dw)}$.
Further,
\begin{align*}
w(\{\mathbf{x} \in \mathbb{R}^N\,:\,|K^{\{a,b\}}f(\mathbf{x})|> \lambda\}) &\leq w(\{\mathbf{x} \in \mathbb{R}^N\,:\,|K^{\{a,b\}}\mathbf{g}(\mathbf{x})|>\lambda/2\})\\&+w(\{\mathbf{x} \in \mathbb{R}^N\,:\,|K^{\{a,b\}}\mathbf{b}(\mathbf{x})|>\lambda/2\}).
\end{align*}
Since $\|K^{\{a,b\}}\|_{L^2(dw) \to L^2(dw)} \leq C_2$, where $C_2>0$ is independent of $0<a,b<\infty$ (see~Corollary \ref{uniform_L2}), we obtain
\begin{align*}
w(\{\mathbf{x} \in \mathbb{R}^N\,:\,|K^{\{a,b\}}\mathbf{g}(\mathbf{x})|>\lambda/2\}) \leq \frac{4}{\lambda^{2}}C_2^2\|\mathbf{g}\|_{L^2(dw)}^2\leq \frac{C_3}{\lambda}\|f\|_{L^1(dw)}.
\end{align*}
 Define $\Omega^{*}=\mathcal{O}\Big(\bigcup_{Q_\ell\in\mathcal Q_\lambda}Q_\ell^{*}\Big)$. There is a constant $C_2>1$, which depends on the Weyl group, doubling constant, and $N$ such that
\begin{align*}
w(\Omega^{*}) \leq C_2w(\Omega) \leq C_2\lambda^{-1}\|f\|_{L^1(dw)}.
\end{align*}
Thus it suffices to estimate $K^{\{a,b\}} \mathbf{b}(\mathbf x)$ on $\mathbb R^N\setminus \Omega^*$. Let $n_0, n_1 \in \mathbb{Z}$ be such that $2^{n_0-1}<a\leq 2^{n_0}$, $2^{n_1}\leq b<2^{n_1+1}$. 
If $n_0<n_1$, we write 
\begin{equation}\label{K-decomp} K^{\{a,b\}}=K^{\{a,2^{n_0}\}}+\Big(\sum_{j=n_0}^{n_1-1}K^{\{2^{j},2^{j+1}\}}\Big)+K^{\{2^{n_1},b\}},
\end{equation}
otherwise $2^{n_0-1}\leq a\leq b\leq 2^{n_0+1}$ and we consider just the single kernel $K^{\{a,b\}}$. 

Since $\sum_{\ell}\mathbf{b}_\ell$ converges to $\mathbf{b}$ in $L^2(dw)$, $K^{\{a,b\}}\mathbf{b}=\sum_{\ell} K^{\{a,b\}}\mathbf{b}_\ell$ with convergence in $L^2(dw)$. 
So we have
\begin{align*}
|K^{\{a,b\}}\mathbf{b}(\mathbf{x})| \leq \sum_{\ell}\Big( |K^{\{a,2^{n_0}\}}\mathbf{b}_\ell(\mathbf x)|+\Big(\sum_{j=n_0}^{n_1-1}|K^{\{2^{j},2^{j+1}\}}\mathbf{b}_\ell(\mathbf x)|\Big)+|K^{\{2^{n_1},b\}}\mathbf{b}_\ell(\mathbf x)|\Big)  .
\end{align*}
By the fact that $\supp \mathbf{b}_\ell \subseteq Q_\ell$ and $\int_{\mathbb{R}^N} \mathbf{b}_\ell(\mathbf{y})\,dw(\mathbf{y})=0$,  we get
\begin{equation}\label{eq:int_ell_j}\begin{split}
&\int_{\mathbb{R}^N \setminus \Omega^{*}}|K^{\{2^{j},2^{j+1}\}}\mathbf{b}_\ell(\mathbf{x})|\,dw(\mathbf{x})
 =\int_{\mathbb{R}^N \setminus \Omega^{*}}\left|\int_{Q_\ell} K^{\{2^{j}, 2^{j+1}\}}(\mathbf{x},\mathbf{y})\mathbf{b}_{\ell}(\mathbf{y})\,dw(\mathbf{y})\right|\,dw(\mathbf{x})\\
&\leq \int_{\mathbb{R}^N \setminus \mathcal O(Q_\ell^{*})}\left|\int_{Q_\ell}
 \Big(K^{\{2^j, 2^{j+1}\}}(\mathbf{x},\mathbf{y})-K^{\{2^j,2^{j+1}\}} (\mathbf x,c_{Q_\ell})\Big)\mathbf{b}_{\ell}(\mathbf{y})\,dw(\mathbf{y})\right|\,dw(\mathbf{x})\\
& \leq C \min \Big(2^{-j\delta}\diam(Q_\ell)^{\delta},2^{\delta j}\diam(Q_{\ell})^{-\delta}\Big)\| \mathbf{b}_\ell\|_{L^1(dw)},
\end{split}\end{equation}
where  in the last inequality we have used Corollary~\ref{coro:j_small_and_large} with $\delta>0$ small enough.
Similarly, 
\begin{equation}
 \int_{\mathbb{R}^N\setminus \Omega^{*}}|K^{\{a,2^{n_0}\}}\mathbf{b}_\ell(\mathbf{x})|\,dw(\mathbf{x})\leq C \min \Big(2^{-n_0\delta}\diam(Q_\ell)^{\delta},2^{n_0\delta }\diam(Q_{\ell})^{-\delta}\Big)\| \mathbf{b}_\ell\|_{L^1(dw)},
\end{equation}
\begin{equation}\label{eq:end}
 \int_{\mathbb{R}^N\setminus \Omega^{*}}|K^{\{2^{n_1},b\}}\mathbf{b}_\ell(\mathbf{x})|\,dw(\mathbf{x})\leq C \min \Big(2^{-n_1\delta}\diam(Q_\ell)^{\delta},2^{n_1\delta }\diam(Q_{\ell})^{-\delta}\Big)\| \mathbf{b}_\ell\|_{L^1(dw)}.
\end{equation}
Summing  up the inequalities~\eqref{eq:int_ell_j}--\eqref{eq:end} over $\ell \in \mathbb{Z}$ we end up with
$$\int_{\mathbb R^N\setminus \Omega^*} | K^{\{a,b\}}\mathbf{b}(\mathbf x)|\,dw(\mathbf x)\leq C \sum_{\ell} \| \mathbf{b}_\ell\|_{L^1}\leq C\| f\|_{L^1(dw)},  $$
with $C$ independent of $0<a\leq b<\infty$. Consequently, by the Chebyshev inequality, this completes the proof of weak type $(1,1)$ of the operator $K^{\{a,b\}}$.

In order to prove~\eqref{eq:Psi_n_weak}, let us note that for any $f\in L^2(dw)\cap L^1(dw)$ and any $a>0$ there is an increasing  sequence $m_j\to\infty$, $m_j>a$,  such that  $K^{\{a\}} f=\lim_{j\to\infty}  K^{\{a,m_j\}}f $ with convergence in $L^2(dw)$ and almost everywhere. Therefore, up to a set of measure zero we have 
\begin{equation}\label{eq:cup_cap_cup}
    \{\mathbf{x} \in \mathbb{R}^N\,:\,|K^{\{a\}}f(\mathbf{x})|>\lambda\}= 
    \bigcup_{n=1}^\infty \bigcap_{j \geq n}\{\mathbf{x} \in \mathbb{R}^N\,:\,|K^{\{a,m_j\}}f(\mathbf{x})|>\lambda\}.
\end{equation}
 So, thanks to~\eqref{eq:cup_cap_cup} and~\eqref{eq:Psi_nm_weak}, we have
\begin{align*}
    w(\{\mathbf{x} \in \mathbb{R}^N\,:\,|K^{\{a\}}f(\mathbf{x})|>\lambda\}) &\leq \sup_{a \leq b}w(\{\mathbf{x} \in \mathbb{R}^N\,:\,|K^{\{a,b\}}f(\mathbf{x})|>\lambda\}) \leq \frac{C}{\lambda}\|f\|_{L^1(dw)}.
\end{align*}
\end{proof}

\begin{proof}[Proof of Theorem~\ref{theorem:strong_type_truncated1}]
First we note that  by \eqref{K-decomp} and \eqref{eq:K_j_bound} with $\delta=0$ we have 
$$\int |K^{\{a,b\}}(\mathbf x,\mathbf y)|\, dw(\mathbf x)\leq C_{a,b}, \quad  \int |K^{\{a,b\}}(\mathbf x,\mathbf y)|\, dw(\mathbf y)\leq C_{a,b}.$$
Thus the operators $K^{\{a,b\}}$ are bounded on $L^p(dw)$. Moreover, $(K^{\{a,b\}})^*=(K^*)^{\{a,b\}}$, where $K^*(\mathbf x)=\overline{K(-\mathbf x)}$. Now for fixed $1<p<\infty$ the uniform bound of the operators $K^{\{a,b\}}$ on $L^p(dw)$ 
follows from  interpolation, Corollary \ref{uniform_L2}, Theorem~\ref{theorem:weak_type_truncated1}, and duality. 

Furthermore,~\eqref{eq:Psi_n_strong} and the strong convergence of $K^{\{a,b\}}$ to $K^{\{a\}}$ on the space  $L^p(dw)$ for $1<p<2$ as ($b\to\infty$) also follows from the Marcinkiewicz interpolation theorem and Corollary \ref{eq:L2strong}. In order to prove~\eqref{eq:Psi_n_strong} for $p>2$, let us show first that for $f \in \mathcal{S}(\mathbb{R}^N)$ the function  $(a,\infty)\ni b\mapsto K^{\{a,b\}}f\in L^p(dw)$ satisfies the  Cauchy condition as $b\to\infty$. Clearly,
\begin{align*}
    &\|K^{\{a,b_1\}}f-K^{\{a,b_2\}}f\|_{L^p(dw)} \\&\leq \|K^{\{a,b_1\}}f-K^{\{a,b_2\}}f\|_{L^2(dw)}^{1/(p-1)}\|K^{\{a,b_1\}}f-K^{\{a,b_2\}}f\|_{L^{2p}(dw)}^{1-1/(p-1)}.
\end{align*}
Thanks to \eqref{eq:L2strong} we have
\begin{align*}
    \lim_{b_1,b_2 \to \infty}\|K^{\{a,b_1\}}f-K^{\{a,b_2\}}f\|_{L^2(dw)}^{1/(p-1)}=0,
\end{align*}
moreover, by ~\eqref{eq:Psi_nm_strong},  we get 
\begin{align*}
    \|K^{\{a,b_1\}}f-K^{\{a,b_2\}}f\|_{L^{2p}(dw)}^{1-1/(p-1)} \leq C\|f\|_{L^{2p}(dw)}^{1-1/(p-1)}<\infty.
\end{align*}
Consequently,
\begin{equation}\label{eq:convergence_on_schwartz}
    \lim_{b_1,b_2 \to \infty}\|K^{\{a,b_1\}}f-K^{\{a,b_2\}}f\|_{L^p(dw)} =0
\end{equation}
for $f \in \mathcal{S}(\mathbb{R}^N)$. In order to prove~\eqref{eq:convergence_on_schwartz} for $f \in L^p(dw)$, it is enough to take a sequence $\mathcal{S}(\mathbb{R}^N) \ni f_{\ell} \to f$ in $L^p(dw)$ and  write
\begin{align*}
    \|K^{\{a,b_1\}}f-K^{\{a,b_2\}}f\|_{L^p(dw)} &\leq \|K^{\{a,b_1\}}(f-f_\ell)\|_{L^p(dw)}+\|K^{\{a,b_1\}}f_\ell-K^{\{a,b_2\}}f_\ell\|_{L^p(dw)}\\&+\|K^{\{a,b_2\}}(f_{\ell}-f)\|_{L^p(dw)},
\end{align*}
then use~\eqref{eq:Psi_nm_strong} for the first and third summand  and~\eqref{eq:convergence_on_schwartz} for the second one.
\end{proof}

\begin{proof}[Proof of Theorem~\ref{theorem:weak_type_K}]
In order to prove~\eqref{eq:K_weak}, we use the same argument as in the second part of the proof of Theorem~\ref{theorem:weak_type_truncated1} (see~\eqref{eq:cup_cap_cup}). In order to prove~\eqref{eq:K_strong}, see the proof of Theorem~\ref{theorem:strong_type_truncated1}.
\end{proof}

\section{Maximal function associated with singular integral}

Let 
\begin{equation}\label{eq:maximal}
    K^{*}f(\mathbf{x})=\sup_{a>0}|K^{\{a\}}f(\mathbf{x})|.
\end{equation}
The goal of this section is to prove the following theorem.

\begin{theorem}\label{theorem:maximal}
Under~\eqref{eq:K_der2},~\eqref{eq:uni_on_annulus}, and~\eqref{eq:limitA}, the operator $K^{*}$ is of weak type $(1,1)$ and it is bounded on $L^p(dw)$ for $1<p<\infty$.
\end{theorem}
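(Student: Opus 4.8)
The plan is to deduce Theorem~\ref{theorem:maximal} from a Cotlar-type pointwise inequality
\[
K^{*}f(\mathbf{x}) \le C\big(M(Kf)(\mathbf{x}) + Mf(\mathbf{x})\big),
\]
where $K$ is the limiting operator of Theorem~\ref{theorem:weak_type_K} and $M$ is the orbit Hardy--Littlewood maximal operator
\[
Mf(\mathbf{x}) = \sup_{r>0}\frac{1}{w(B(\mathbf{x},r))}\int_{\mathcal{O}(B(\mathbf{x},r))}|f(\mathbf{y})|\,dw(\mathbf{y}).
\]
Since $G$ preserves $dw$ and $w(B(\sigma(\mathbf{x}),r)) = w(B(\mathbf{x},r))$, the operator $M$ is pointwise comparable to a finite sum of reflections of the centred maximal operator for the doubling measure $dw$, so it is bounded on $L^p(dw)$ for $1<p\le\infty$ and of weak type $(1,1)$. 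Granting the Cotlar inequality, the $L^p$-bound for $1<p<\infty$ is immediate: by Theorem~\ref{theorem:weak_type_K} the operator $K$ is bounded on $L^p(dw)$, whence $\|K^{*}f\|_{L^p(dw)} \le C(\|Kf\|_{L^p(dw)} + \|f\|_{L^p(dw)}) \le C\|f\|_{L^p(dw)}$.

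To prove the Cotlar inequality I would fix $\mathbf{x}_0$ and $a>0$ and split $f = f_1 + f_2$ with $f_1 = f\chi_{\mathcal{O}(B(\mathbf{x}_0,a))}$. The decisive localization step uses Theorem~\ref{teo:support}: writing $K - K^{\{a\}} = \lim_{\varepsilon\to 0}K^{\{\varepsilon,a\}}$ with each $K^{\{\varepsilon,a\}}$ supported in $B(0,a)$, the kernel $(K-K^{\{a\}})(\mathbf{x}_0,\cdot)$ is supported in $\mathcal{O}(B(\mathbf{x}_0,a))$, so $(K-K^{\{a\}})f_2(\mathbf{x}_0)=0$ and therefore $K^{\{a\}}f(\mathbf{x}_0) = Kf_2(\mathbf{x}_0) + K^{\{a\}}f_1(\mathbf{x}_0)$. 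For the first term I would transfer the value at $\mathbf{x}_0$ to an average over a small ball $B=B(\mathbf{x}_0,\gamma a)$: summing the Hölder estimates of Proposition~\ref{propo:j_small} over dyadic scales gives $|Kf_2(\mathbf{x}_0) - Kf_2(\mathbf{z})| \le CMf(\mathbf{x}_0)$ for $\mathbf{z}\in B$, after which I would run the standard Kolmogorov contradiction argument. Writing $Kf_2(\mathbf{z}) = Kf(\mathbf{z}) - Kf_1(\mathbf{z})$, the average of $|Kf(\mathbf{z})|$ over $B$ is at most $M(Kf)(\mathbf{x}_0)$, while the weak type $(1,1)$ of $K$ controls the size of $\{\mathbf{z}\in B : |Kf_1(\mathbf{z})| \text{ large}\}$, with $\|f_1\|_{L^1(dw)} \le Cw(B)Mf(\mathbf{x}_0)$ by doubling.

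The main obstacle is the genuinely Dunkl term $K^{\{a\}}f_1(\mathbf{x}_0)$. In the Euclidean case the analogous quantity is an integral over the single annulus $\{a/2\le\|\mathbf{x}_0-\mathbf{y}\|\le a\}$ and is trivially dominated by $Mf(\mathbf{x}_0)$; here this fails because the Dunkl translations do not localize, so that the annular kernel $K^{\{2^{j},2^{j+1}\}}(\mathbf{x}_0,\cdot)$ need not vanish near $\mathbf{x}_0$ and the local part of $K^{\{a\}}$ is not absolutely integrable. To overcome this I would again pass to the average $w(B)^{-1}\int_B|K^{\{a\}}f_1(\mathbf{z})|\,dw(\mathbf{z})$ and apply Kolmogorov's inequality to $K^{\{a\}}$, whose weak type $(1,1)$ bound is \emph{uniform} in $a$ by Theorem~\ref{theorem:weak_type_truncated1}; the comparison of $K^{\{a\}}f_1(\mathbf{x}_0)$ with this average, as well as the passage from a small exponent to the $L^1$-average, is controlled by the dyadic-band mass and Hölder estimates of Proposition~\ref{propo:j_small} together with the uniform $L^2$-bound of Corollary~\ref{uniform_L2}. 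This step, rather than any soft functional-analytic input, is where the bulk of the work lies. Combining the two cases yields $|K^{\{a\}}f(\mathbf{x}_0)| \le C(M(Kf)(\mathbf{x}_0) + Mf(\mathbf{x}_0))$ with $C$ independent of $a$, and taking the supremum over $a$ gives the Cotlar inequality.

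Finally, the weak type $(1,1)$ of $K^{*}$ does not follow from the Cotlar inequality directly, since $M$ does not preserve weak-$L^1$; instead I would argue by a Calderón--Zygmund decomposition exactly as in the proof of Theorem~\ref{theorem:weak_type_truncated1}. For $f=\mathbf{g}+\mathbf{b}$ at height $\lambda$, the good part is handled by the $L^2$-bound $\|K^{*}\mathbf{g}\|_{L^2(dw)}\le C\|\mathbf{g}\|_{L^2(dw)}$, which is the Cotlar inequality at $p=2$. For the bad part it suffices to bound $\int_{\mathbb{R}^N\setminus\Omega^{*}}K^{*}\mathbf{b}\,dw$; using $\int\mathbf{b}_\ell\,dw=0$, for $\mathbf{x}\notin\mathcal{O}(Q_\ell^{*})$ I would write $\sup_{a}|K^{\{a\}}\mathbf{b}_\ell(\mathbf{x})| \le \sup_{j}\sup_{2^{j-1}\le a\le 2^{j+1}}|K^{\{a\}}\mathbf{b}_\ell(\mathbf{x})|$, estimate $\int_{\mathbb{R}^N\setminus\mathcal{O}(Q_\ell^{*})}(\cdots)\,dw$ by $\sum_j$ of the corresponding integrals, and bound each by the second-variable analogue of the band estimate~\eqref{eq:K^n_Holder_22} (available since $K^{*}(\mathbf{x})=\overline{K(-\mathbf{x})}$ satisfies the same hypotheses) combined with the cancellation, obtaining $C\min(2^{-j\delta}\diam(Q_\ell)^{\delta},2^{j\delta}\diam(Q_\ell)^{-\delta})\|\mathbf{b}_\ell\|_{L^1(dw)}$. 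Summing the geometric series over $j$ and then over $\ell$ gives $\int_{\mathbb{R}^N\setminus\Omega^{*}}K^{*}\mathbf{b}\,dw \le C\sum_\ell\|\mathbf{b}_\ell\|_{L^1(dw)}\le C\|f\|_{L^1(dw)}$, which together with $w(\Omega^{*})\le C\lambda^{-1}\|f\|_{L^1(dw)}$ and the estimate for $\mathbf{g}$ completes the weak type $(1,1)$ bound.
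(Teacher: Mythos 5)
Your overall architecture (a Cotlar-type pointwise bound for the good part plus a Calder\'on--Zygmund argument for the bad part, whose treatment via Corollary~\ref{coro:j_small_and_large} and the cancellation of $\mathbf{b}_\ell$ matches the paper) is the right one, but the proof of the Cotlar inequality has a genuine gap, and it is located exactly where you say ``the bulk of the work lies.'' The inequality you state, $K^*f\leq C(M(Kf)+Mf)$, is not obtainable from the estimates available in this setting: Proposition~\ref{propo:j_small} and \eqref{eq:K^n_Holder_22} control the translated kernels only in an \emph{integrated} sense ($L^1$ in one variable), and there are no pointwise bounds on $K^{\{a,b\}}(\mathbf{x},\mathbf{y})$. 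To convert $\int|K(\mathbf{x}_0,\mathbf{y})-K(\mathbf{z},\mathbf{y})||f_2(\mathbf{y})|\,dw(\mathbf{y})$ into a bound by $Mf(\mathbf{x}_0)$ one needs annulus-by-annulus pointwise decay of the kernel difference; with only $L^1$-H\"older estimates the best you can extract is $\|f\|_{L^\infty}$ times a convergent geometric series. This is precisely why the paper proves the weaker inequality $K^*f(\mathbf{x})\leq C\big(\sum_{\sigma\in G}\mathcal{M}_{\rm HL}(Kf)(\sigma(\mathbf{x}))+\|f\|_{L^\infty}\big)$, which still suffices because in the Calder\'on--Zygmund decomposition $\|\mathbf{g}\|_{L^\infty}\leq C\lambda$. (A side effect: with the $\|f\|_{L^\infty}$ version the strong $L^p$ bound is no longer ``immediate'' from boundedness of $M$ and $K$; the paper instead runs the decomposition at height $\lambda^p$ to get weak type $(p,p)$ for every $1\leq p<\infty$ and interpolates.)

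The second, more serious, unresolved point is the local term $K^{\{a\}}f_1(\mathbf{x}_0)$, which you correctly identify as the genuinely Dunkl obstruction but do not actually control. Your proposed route --- compare with the average over a small ball and apply Kolmogorov's inequality using the uniform weak $(1,1)$ bound of Theorem~\ref{theorem:weak_type_truncated1} --- runs into the same two problems: the comparison of the value at $\mathbf{x}_0$ with the ball average again requires pointwise kernel regularity (applied now to $f_1$, which lives exactly where the kernel is singular), and Kolmogorov only yields $L^q$-averages for $q<1$, so ``the passage from a small exponent to the $L^1$-average'' is not a technicality one can wave at. The paper's resolution is different and is the key idea you are missing: an approximate-identity decomposition
\begin{equation*}
K^{\{a\}}f=\big(K^{\{a\}}-\varphi_a*\varphi_a*K^{\{a\}}\big)*f+\varphi_a*\varphi_a*(Kf)-\big(\varphi_a*(\widetilde{K^{\{a\}}}\varphi_a)\big)*f,
\qquad \widetilde{K^{\{a\}}}=K-K^{\{a\}},
\end{equation*}
where the middle term gives the maximal function of $Kf$ via Lemma~\ref{lem:nonradial_estimation}, the first term is handled by \eqref{eq:K^n_Holder_22} integrated against the kernel of $\varphi_a*\varphi_a$, and the third by the Cauchy--Schwarz bound $\|\widetilde{K^{\{a\}}}\varphi_a\|_{L^1(dw)}\leq C$ together with Proposition~\ref{propo:compact_supports}; each of the outer terms is then bounded by $C\|f\|_{L^\infty}$. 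Until you either prove pointwise kernel estimates (not available here) or adopt some such smoothing device, the Cotlar inequality --- and hence the theorem --- remains unproved in your write-up.
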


{ 
\subsection{Cotlar type inequality}

Let
\begin{align*}
\mathcal{M}_{\rm HL}f(\mathbf{x})=\sup_{\mathbf{x} \in B}\frac{1}{w(B)}\int_{B}|f(\mathbf{y})|\,dw(\mathbf{y}),
\end{align*}
where the supremum is taken over all Euclidean balls $B$ which contain $\mathbf{x}$, be the non-centered  Hardy-Littlewood maximal function defined  on the space of homogeneous type $(\mathbb R^N, \|\mathbf x-\mathbf y\|, dw)$. The following lemma is in the spirit of Cotlar's inequality (cf.~\cite[Lemma 5.15]{Duo}).

\begin{lemma}
 Let $p \in [1,\infty)$. There is a constant $C>0$ such that for all $f \in L^p(dw) \cap L^{\infty}$ and $\mathbf{x} \in \mathbb{R}^N$ we have
\begin{equation}\label{eq:Cotlar}
    K^*f(\mathbf{x}) \leq C\Big(\sum_{\sigma \in G}(\mathcal{M}_{\rm HL}Kf)(\sigma(\mathbf{x}))+\|f\|_{L^{\infty}}\Big).
\end{equation}
\end{lemma}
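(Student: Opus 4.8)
The plan is to adapt the classical Cotlar argument (cf.~\cite[Lemma 5.15]{Duo}) to the orbit geometry of the Dunkl setting, replacing Euclidean balls by orbits of balls and feeding in the kernel estimates of Proposition~\ref{propo:j_small} together with the weak type $(1,1)$ bound of the limiting operator $K$ from Theorem~\ref{theorem:weak_type_K}. Fix $\mathbf{x}\in\mathbb{R}^N$ and $a>0$; the aim is to bound $|K^{\{a\}}f(\mathbf{x})|$ by the right-hand side of \eqref{eq:Cotlar}, uniformly in $a$, and then take the supremum.

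First I would localize by writing $f=f_0+f_\infty$, where $f_0=f\chi_{\mathcal{O}(B(\mathbf{x},2a))}$ and $f_\infty=f-f_0$. For the local part, since $K^{\{a\}}(\mathbf{x},\cdot)$ is concentrated on $\{d(\mathbf{x},\cdot)\geq a/2\}$ while $f_0$ is supported where $d(\mathbf{x},\cdot)\leq 2a$, the size estimate \eqref{eq:K_j_bound} with $\delta=0$ (summed over the boundedly many dyadic scales between $a$ and $2a$) gives $|K^{\{a\}}f_0(\mathbf{x})|\leq C\|f\|_{L^\infty}\int_{d(\mathbf{x},\mathbf{y})\leq 2a}|K^{\{a\}}(\mathbf{x},\mathbf{y})|\,dw(\mathbf{y})\leq C\|f\|_{L^\infty}$. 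For the far part, I would first observe, via Theorem~\ref{teo:support}, that the kernel of $K-K^{\{a\}}$ is supported in $\{d(\mathbf{x},\mathbf{y})\leq a\}$ (it is the $L^2$-limit of the kernels $K^{\{a',a\}}$, each supported in a ball of radius $a$), so that $K^{\{a\}}f_\infty(\mathbf{x})=Kf_\infty(\mathbf{x})$.

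Next I would compare with nearby points. For $z\in B(\mathbf{x},a/2)$ the Hörmander-type estimate \eqref{eq:K^n_Holder_22}, rescaled to level $a$, yields $|K^{\{a\}}f_\infty(\mathbf{x})-K^{\{a\}}f_\infty(z)|\leq C\|f\|_{L^\infty}a^{-\delta}\|\mathbf{x}-z\|^\delta\leq C\|f\|_{L^\infty}$; moreover on $\supp f_\infty$ one still has $d(z,\mathbf{y})\geq d(\mathbf{x},\mathbf{y})-\|\mathbf{x}-z\|>a$, so $K^{\{a\}}f_\infty(z)=Kf_\infty(z)=Kf(z)-Kf_0(z)$. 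Averaging the resulting identity over $z\in B(\mathbf{x},a/2)$ against $dw$, the term $\tfrac{1}{w(B(\mathbf{x},a/2))}\int_{B(\mathbf{x},a/2)}|Kf(z)|\,dw(z)$ is dominated by $\mathcal{M}_{\rm HL}(Kf)(\mathbf{x})$, while the remaining term is controlled by Kolmogorov's inequality: raising to a power $r\in(0,1)$ and applying Jensen for the concave map $t\mapsto t^r$, using the weak type $(1,1)$ bound of $K$, and estimating $\|f_0\|_{L^1(dw)}\leq\|f\|_{L^\infty}\,w(\mathcal{O}(B(\mathbf{x},2a)))\leq |G|\,\|f\|_{L^\infty}\,w(B(\mathbf{x},2a))$ together with doubling \eqref{eq:doubling}, gives $\big(\tfrac{1}{w(B(\mathbf{x},a/2))}\int_{B(\mathbf{x},a/2)}|Kf_0|^r\,dw\big)^{1/r}\leq C\|f\|_{L^\infty}$. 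Collecting the estimates and taking the supremum over $a>0$ then yields \eqref{eq:Cotlar}; the orbit ball $\mathcal{O}(B(\mathbf{x},2a))$ appearing in the local decomposition is precisely where the group $G$ enters, consistently with the orbit sum $\sum_{\sigma\in G}$ on the right-hand side.

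The main obstacle I anticipate is the localization of the kernels in the orbit metric $d$, namely justifying that $K^{\{a\}}(\mathbf{x},\cdot)$ lives on $\{d(\mathbf{x},\cdot)\gtrsim a\}$ and that the correction $K-K^{\{a\}}$ lives on $\{d(\mathbf{x},\cdot)\lesssim a\}$, since these rely on the support theorem for Dunkl translations (Theorem~\ref{teo:support}) and on controlling the $L^2$-limit defining $K$; once these are in place, the size and smoothness inputs of Proposition~\ref{propo:j_small} slot into the classical scheme. A secondary technical point is the Kolmogorov step, which requires the $r<1$ device to avoid the logarithmic divergence of the weak $(1,1)$ tail, together with the orbit-volume comparison $w(\mathcal{O}(B))\leq |G|\,w(B)$ and the doubling property.
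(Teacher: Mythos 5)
Your overall architecture is the classical Cotlar argument: split $f=f_0+f_\infty$ around $\mathbf{x}$ at scale $a$, compare $K^{\{a\}}f_\infty$ at nearby points via a H\"ormander-type estimate, and close with the maximal function and Kolmogorov's inequality. The paper proceeds differently: it never splits $f$, but instead regularizes the operator, writing $K^{\{a\}}f=(K^{\{a\}}-\varphi_a*\varphi_a*K^{\{a\}})*f+\varphi_a*\varphi_a*(Kf)-(\varphi_a*(\widetilde{K^{\{a\}}}\varphi_a))*f$ with $\widetilde{K^{\{a\}}}=K-K^{\{a\}}$, and handles the three pieces by \eqref{eq:K^n_Holder_22}, Lemma~\ref{lem:nonradial_estimation}, and Proposition~\ref{propo:compact_supports} combined with Cauchy--Schwarz and the uniform $L^2$-bounds. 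Your far-part steps (the identity $K^{\{a\}}f_\infty=Kf_\infty$ on $B(\mathbf{x},a/2)$, the H\"older comparison via \eqref{eq:K^n_Holder_22}, the averaging and the Kolmogorov device) are sound and run parallel to the paper's estimates.

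The genuine gap is the local estimate $|K^{\{a\}}f_0(\mathbf{x})|\leq C\|f\|_{L^\infty}$. You justify it by asserting that $K^{\{a\}}(\mathbf{x},\cdot)$ is concentrated on $\{d(\mathbf{x},\cdot)\geq a/2\}$, but Theorem~\ref{teo:support} gives only the outer inclusion: if $g$ is supported in $B(0,r)$, then $\tau_{\mathbf{x}}g(-\cdot)$ is supported in $\mathcal{O}(B(\mathbf{x},r))$. It does not imply that the translation of a function vanishing on $B(0,a/2)$ vanishes where $d(\mathbf{x},\cdot)<a/2$; already for radial $g$, R\"osler's formula \eqref{eq:translation-radial} shows that $\tau_{\mathbf{x}}g(-\mathbf{y})$ is guaranteed to vanish only when $\max_{\sigma\in G}\|\sigma(\mathbf{x})-\mathbf{y}\|<a/2$, a far stronger requirement than $d(\mathbf{x},\mathbf{y})<a/2$. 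Consequently every dyadic piece $K^{\{2^j,2^{j+1}\}}(\mathbf{x},\cdot)$ with $2^j\gg a$ may charge the set $\{d(\mathbf{x},\cdot)\leq 2a\}$ with $L^1(dw)$-mass of order one, and Proposition~\ref{propo:j_small} provides no decay in $j$ on that set (the weight $d(\mathbf{x},\mathbf{y})^{\delta}$ with $\delta\geq 0$ in \eqref{eq:K_j_bound} penalizes large $d$, not small $d$), so the sum over scales in $K^{\{a\}}=\sum_j K^{\{2^j,2^{j+1}\}}$ is uncontrolled. Since neither pointwise bounds for translations of non-radial kernels nor lower bounds on their supports are available, this step cannot be closed with the tools of the paper; it is precisely the obstruction that the paper's $\varphi_a*\varphi_a$-decomposition is designed to avoid, its only ``local'' object being $\widetilde{K^{\{a\}}}\varphi_a$, i.e.\ the operator $K-K^{\{a\}}$ applied to the compactly supported bump $\varphi_a$, which is amenable to Cauchy--Schwarz and Proposition~\ref{propo:compact_supports}. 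To repair your argument you would need either to prove the missing localization of $K^{\{a\}}(\mathbf{x},\cdot)$ near the diagonal or to replace the $f_0/f_\infty$ splitting by an operator-side regularization as in the paper.
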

\begin{proof}
We assume additionally that $f\in L^2(dw)$. Then this assumption can be easily relaxed  by a density argument. Let $\varphi \in C_c^{\infty}(\mathbb{R}^N)$ be a radial function such that $\supp \varphi \subseteq B(0,1)$ and $\int_{\mathbb{R}^N} \varphi\, dw=1$. Fix $a>0$ and set $\widetilde{K^{\{a\}}}=K-K^{\{a\}}$. Let as remind that $\varphi_a(\mathbf x)=a^{-\mathbf N} \varphi(\mathbf x /a)$. Then 
\begin{equation*}
    \begin{split}
        K^{\{a\}}f&= (K^{\{a\}}-\varphi_a*\varphi_a*K^{\{a\}})*f+\varphi_a*\varphi_a*(Kf)-((\varphi_a*(\widetilde{K^{\{a\}}}\varphi_a))*f\\&=:J_1+J_2-J_3.
    \end{split}
\end{equation*}
Clearly, by Lemma~\ref{lem:nonradial_estimation},  $|J_2|\leq C \sum_{\sigma\in G} ( \mathcal M_{\rm HL}Kf)(\sigma \cdot)$. In order to estimate $J_3$ we note that $\text{supp}\, \widetilde{K^{\{a\}}}\varphi_a \subset B(0,2a)$, so by Cauchy--Schwarz inequality and Theorem~\ref{theorem:strong_type_truncated1} together with Lemma~\ref{lem:nonradial_estimation} we have 
\begin{equation}\label{eq:CS_app}
    \|\widetilde{K^{\{a\}}}\varphi_a \|_{L^1(dw)}\leq C w(B(0,2a))^{1\slash 2} \| \varphi_a\|_{L^2(dw)}\leq C.
\end{equation}
Applying  Proposition \ref{propo:compact_supports} we get 
$$ \int_{\mathbb{R}^N} |(\varphi_a*(\widetilde{K^{\{a\}}}\varphi_a))(\mathbf x,\mathbf y)|\, dw(\mathbf y)\leq Ca^{\mathbf N} \|\varphi_a\|_{L^\infty} \cdot \|\widetilde{K^{\{a\}}}\varphi_a \|_{L^1(dw)}\leq C, $$
which, together with~\eqref{eq:CS_app}, implies $|J_3|\leq C\| f\|_{L^\infty}.$ Finally, in order to evaluate $J_1$, we consider the integral kernel of the operator  $(K^{\{a\}}-\varphi_a*\varphi_a*K^{\{a\}})$. Using~\eqref{eq:K^n_Holder_22} and Lemma~\ref{lem:nonradial_estimation} we get 
\begin{align*}
        \int_{\mathbb{R}^N}| & K^{\{a\}}(\mathbf x,\mathbf y) -(\varphi_a*\varphi_a*K^{\{a\}})(\mathbf x,\mathbf y)|\, dw(\mathbf y)\\
        &= \int_{\mathbb{R}^N} \Big|\int_{\mathbb{R}^N} (\varphi_a*\varphi_a)(\mathbf x,\mathbf z) (K^{\{a\}}(\mathbf x,\mathbf y)-K^{\{a\}}(\mathbf z,\mathbf y))\, dw(\mathbf z)\Big|\, dw(\mathbf y)\\
        &\leq \int_{\mathbb{R}^N}\int_{\mathbb{R}^N} |(\varphi_a*\varphi_a)(\mathbf x,\mathbf z)||K^{\{a\}} (\mathbf x,\mathbf y)-K^{\{a\}}(\mathbf z,\mathbf y)|\, dw(\mathbf y)\, dw(\mathbf z)\\
        &\leq \int_{\mathbb{R}^N} |(\varphi_a*\varphi_a)(\mathbf x,\mathbf z)|  a^{-\delta} \| \mathbf x-\mathbf z\|^{\delta} \, dw(\mathbf z) \\
        &\leq \int_{\mathcal{O}(B(\mathbf{x},4a))} w(B(\mathbf x,2a))^{-1} \Big(1+\frac{\| \mathbf x-\mathbf z\|}{a}\Big)^{-2} a^{-\delta} \| \mathbf x-\mathbf z\|^{\delta } \, dw(\mathbf z) \leq C,  
\end{align*}
which gives $|J_1|\leq C\| f\|_{L^\infty}$. 
\end{proof}
}

\subsection{Proof of Theorem~\ref{theorem:maximal}}

\begin{proof}
It is enough to prove that $ K^{*}$ is of weak type $(p,p)$ for $1 \leq p <\infty$. Let $f\in L^p(dw)\cap L^2(dw)$. Fix $\lambda>0$. We denote by $\mathcal Q_\lambda$ the collection of all maximal (disjoint)  dyadic cubes $Q_\ell$ in $\mathbb R^N$ satisfying
\begin{equation}\label{eq:lambda_upper}
    \lambda^p< \frac{1}{w(Q_\ell)}\int_{Q_\ell}|f(\mathbf{x})|^p\,dw(\mathbf{x}).
\end{equation}
 Then
 \begin{equation}\label{eq:lambda_lower}
\frac{1}{w(Q_\ell)} \int_{Q_\ell}|f(\mathbf{x})|^p\,dw(\mathbf{x})\leq C_1\lambda^p.
 \end{equation}
Set $\Omega=\bigcup_{Q_\ell \in\mathcal Q_\lambda} Q_\ell$. Thanks to~\eqref{eq:lambda_upper} we have
   \begin{equation}\label{eq:sum_Q_j}
\sum_{\ell}w(Q_\ell)=w(\Omega)\leq \lambda^{-p}\| f\|_{L^p(dw)}^p.
   \end{equation}
Form the corresponding Calder\'on--Zygmund decomposition of $f$, namely, $f=\mathbf{g}+\mathbf{b}$, where

\begin{equation*}\label{eq:good_max}
\mathbf{g}(\mathbf{x})=f\chi_{\Omega^c}(\mathbf{x})+\sum_{\ell}w(Q_\ell)^{-1}\Big(\int_{Q_\ell}f(\mathbf{y})\,dw(\mathbf{y})\Big)\chi_{Q_\ell}(\mathbf{x}),
\end{equation*}
\begin{equation*}\label{eq:bad_max}
\mathbf{b}(\mathbf{x})=\sum_{\ell}\mathbf{b}_\ell(\mathbf{x}), \text{ where }\mathbf{b}_\ell(\mathbf{x})=\left(f(\mathbf{x})-w(Q_\ell)^{-1}\int_{Q_\ell}f(\mathbf{y})\,dw(\mathbf{y})\right)\chi_{Q_\ell}(\mathbf{x}).
\end{equation*}
Clearly, $\|\mathbf{g}\|_{L^p(dw)}+\|\mathbf{b}\|_{L^p(dw)} \leq C\| f\|_{L^p(dw)}$, $\|\mathbf{g}\|_{L^2(dw)}+\|\mathbf{b}\|_{L^2(dw)} \leq C\| f\|_{L^2(dw)}$, and  $|\mathbf{g}(\mathbf x)|\leq C_1^{ 1/p}\lambda$. Further,
\begin{align*}
w(\{\mathbf{x} \in \mathbb{R}^N\,:\,|K^{*}f(\mathbf{x})|> \lambda) &\leq w(\{\mathbf{x} \in \mathbb{R}^N\,:\,|K^{*}\mathbf{b}(\mathbf{x})|>\lambda/2\})\\&+w(\{\mathbf{x} \in \mathbb{R}^N\,:\,|K^{*}\mathbf{g}(\mathbf{x})|>\lambda/2\})=:S_1+S_2.
\end{align*}

Let us estimate $S_1$ first.  Define $\Omega^{*}=\mathcal{O}\Big(\bigcup_{Q_\ell\in\mathcal Q_\lambda}Q_\ell^{*}\Big)$. There is a constant $C_2>1$, which depends on the Weyl group, doubling constant, and $N$ such that
\begin{align}\label{eq:OmegaStar}
w(\Omega^{*}) \leq C_2w(\Omega) \leq C_2\lambda^{-p}\|f\|^p_{L^p(dw)}.
\end{align}
Thus it suffices to estimate $K^{*}\mathbf{b}(\mathbf x)$ on $\mathbb R^N\setminus \Omega^*$. Note that $\sum_{\ell}\mathbf{b}_\ell$ converges to $\mathbf{b}$ in $L^2(dw)$.  Let us remind that $c_{Q_\ell}$ is the center of $Q_j$. We write
\begin{equation}\label{eq:on_bad_computation}
    \begin{split}
        &\|K^{*}\mathbf{b}\|_{L^1(\mathbb{R}^N \setminus \Omega^{*},dw)}=\int_{\mathbb{R}^N \setminus \Omega^{*}}\sup_{a>0}\Big|\sum_{\ell}\int_{Q_{\ell}}K^{\{a\}}(\mathbf{x},\mathbf{y})\mathbf{b}_{\ell}(\mathbf{y})\,dw(\mathbf{y})\Big|\,dw(\mathbf{x}) \\&\leq \int_{\mathbb{R}^N \setminus \Omega^{*}}\sup_{a>0}\Big|\sum_{\ell}\int_{Q_{\ell}}(K^{\{a\}}(\mathbf{x},\mathbf{y})-K^{\{a\}}(\mathbf{x},c_{Q_{\ell}}))\mathbf{b}_{\ell}(\mathbf{y})\,dw(\mathbf{y})\Big|\,dw(\mathbf{x}) \\& \leq \sum_{\ell}\sum_{j \in \mathbb{Z}}\int_{Q_{\ell}}|\mathbf{b}_{\ell}(\mathbf{y})|\int_{\mathbb{R}^N \setminus {\mathcal O(Q_{\ell}^{*})}}\sup_{2^{j-1} \leq a' \leq b' \leq 2^{j+1}}|K^{\{a',b'\}}(\mathbf{x},\mathbf{y})-K^{\{a',b'\}}(\mathbf{x},c_{Q_{\ell}})|\,dw(\mathbf{x})\,dw(\mathbf{y})\\
& \leq C \sum_{\ell}\sum_{j \in \mathbb{Z}}\min \Big(2^{-j\delta}\diam(Q_\ell)^{\delta},2^{\delta j}\diam(Q_{\ell})^{-\delta}\Big)\| \mathbf{b}_\ell\|_{L^1(dw)},
\end{split}\end{equation}
where  in the last inequality we have used Corollary~\ref{coro:j_small_and_large} with $\delta>0$ small enough. By H\"older's inequality and~\eqref{eq:lambda_lower} we have
\begin{equation}\label{eq:sum_bad}
    \sum_{\ell}\|\mathbf{b}_{\ell}\|_{L^1(dw)} \leq \sum_{\ell}w(Q_j)\left(w(Q_{\ell})^{-1}\int_{Q_\ell}|\mathbf{b}_{\ell}(\mathbf{x})|^p\,dw(\mathbf{x})\right)^{1/p} \leq C\lambda\sum_{\ell}w(Q_{\ell}).
\end{equation}
Combining~\eqref{eq:on_bad_computation},~\eqref{eq:sum_bad}, and~\eqref{eq:sum_Q_j} we get
\begin{equation}\label{eq:K_star_L_1}
    \|K^{*}\mathbf{b}\|_{L^1(\mathbb{R}^N \setminus \Omega^{*},\,dw)} \leq C \lambda^{-p+1}\|f\|_{L^p}^p.
\end{equation}
Chebyshev's inequality applied to  ~\eqref{eq:K_star_L_1} together with \eqref{eq:OmegaStar}  imply $S_1\leq C \lambda^{-p}\| f\|_{L^p(dw)}^p$. 

In order to estimate $S_2$, let us note that it is enough to obtain a proper bound for
\begin{equation*}
  w(\{\mathbf{x} \in \mathbb{R}^N\,:\,|K^{*}\mathbf{g}(\mathbf{x})|>2C_1^{1/p}C_3\lambda\}),
\end{equation*}
where $C_3$ is the constant from~\eqref{eq:Cotlar}. Thanks to the fact that $\|\mathbf{g}\|_{L^{\infty}} \leq C_1^{1/p}\lambda$ and~\eqref{eq:Cotlar} we have
\begin{equation*}
  w(\{\mathbf{x} \in \mathbb{R}^N\,:\,\big|K^{*}\mathbf{g}(\mathbf{x})\big|>2C_1^{1/p}C_3\lambda\}) \leq w(\{\mathbf{x} \in \mathbb{R}^N\,:\,|\sum_{\sigma \in G}\mathcal{M}_{\rm HL}K\mathbf{g}(\sigma(\mathbf{x}))|>C_1^{1/p}\lambda\}).
\end{equation*}
If $p>1$, then  let us remind that $\mathcal{M}_{\rm HL}$ and $K$ are bounded operators on $L^p(dw)$ (see Theorem~\ref{theorem:weak_type_K}), so $S_2\leq C  \lambda^{-p}\| f\|_{L^p(dw)}^p$. 
If $p=1$, then $\|\mathbf{g}\|_{L^2(dw)} \leq C\lambda \| f\|_{L^1(dw)}$ and, consequently, 
\begin{align*}
   w(\{\mathbf{x} \in \mathbb{R}^N\,:\,|\sum_{\sigma \in G}\mathcal{M}_{\rm HL}K\mathbf{g}(\sigma(\mathbf{x}))|>C_1^{1/p}\lambda\}) \leq C\lambda^{-2}\|\mathbf{g}\|_{L^{2}(dw)}^{2} \leq C\lambda^{-1}\|f\|_{L^1(dw)}.
\end{align*}
\end{proof}
}

\end{document}